\newtheorem{theorem}{Theorem}[section]
\newtheorem*{theorem*}{Main Theorem}
\newtheorem{corollary}[theorem]{Corollary}
\newtheorem{lemma}[theorem]{Lemma}
\newtheorem{proposition}[theorem]{Proposition}
\newtheorem{definition-proposition}[theorem]{Definition-Proposition}
\theoremstyle{definition}
\newtheorem{definition}[theorem]{Definition}
\newtheorem{remark}[theorem]{Remark}
\newtheorem{condition}[theorem]{Condition}
\newtheorem*{question*}{Question}
\newtheorem{question**}{Question}
\newtheorem*{conjecture*}{Conjecture}
\newtheorem{example}[theorem]{Example}
\newtheorem*{claim*}{Claim}
\newtheorem{mainthm}{Theorem}
\def\Mor{\operatorname{Mor}}\def\Ob{\operatorname{Ob}}
\def\Ext{\operatorname{Ext}}
\def\Hom{\operatorname{Hom}}
\def\mod{\operatorname{\mathsf{mod}}}
\def\proj{\operatorname{\mathsf{P}}}\def\inj{\operatorname{\mathsf{I}}}
\def\Ab{\mathsf{Ab}}
\def\add{\operatorname{\mathsf{add}}}
\newcommand{\op}{\mathsf{op}}
\newcommand{\A}{\mathcal{A}}
\newcommand{\C}{\mathcal{C}}\newcommand{\D}{\mathcal{D}}
\renewcommand{\H}{\mathcal{H}}\newcommand{\I}{\mathcal{I}}
\newcommand{\K}{\mathcal{K}}
\newcommand{\N}{\mathcal{N}}
\renewcommand{\P}{\mathcal{P}}\newcommand{\Q}{\mathcal{Q}}
\renewcommand{\S}{\mathcal{S}}\newcommand{\T}{\mathcal{T}}
\newcommand{\U}{\mathcal{U}}\newcommand{\V}{\mathcal{V}}
\newcommand{\W}{\mathcal{W}}\newcommand{\Z}{\mathcal{Z}}
\newcommand{\End}{\operatorname{End}}
\newcommand{\Ker}{\operatorname{Ker}}
\renewcommand{\Im}{\operatorname{Im}}
\newcommand{\Cok}{\operatorname{Cok}}
\newcommand{\xto}{\xrightarrow}
\def\cone{\operatorname{\mathsf{Cone}}}
\def\cocone{\operatorname{\mathsf{CoCone}}}
\def\udef{\operatorname{\U-\mathsf{def}}}\def\tinf{\operatorname{\T-\mathsf{inf}}}
\def\wfib{\operatorname{\mathsf{wFib}}}\def\wcof{\operatorname{\mathsf{wCof}}}
\begin{document}
\setlength{\baselineskip}{15pt}
\title[Abelian categories from triangulated categories]{Abelian categories from triangulated categories via Nakaoka-Palu's localization}
\author{Yasuaki Ogawa}
\email{ogawa.yasuaki.gh@cc.nara-edu.ac.jp} %
\address{Center for Educational Research of Science and Mathematics, Nara University of Education, Takabatake-cho, Nara, 630-8528, Japan}
\keywords{Gabriel-Zisman localization, extriangulated category, cotorsion pair, heart}
\thanks{2020 {\em Mathematics Subject Classification.} Primary 18E35; Secondary 18E30, 18E10}
\begin{abstract}
The aim of this paper is to provide an expansion of Abe-Nakaoka's heart construction to the following two different realizations of the module category over the endomorphism ring of a rigid object in a triangulated category:
Buan-Marsh's localization and Iyama-Yoshino's subfactor.
Our method depends on a modification of Nakaoka-Palu's HTCP localization, a Gabriel-Zisman localization of extriangulated categories which is also realized as a subfactor of the original ones.
Besides of the heart construction, our generalized HTCP localization involves the following phenomena:
(1) stable category with respect to a class of objects;
(2) recollement of triangulated categories;
(3) recollement of abelian categories under a certain assumption.
\end{abstract}
\maketitle


\section*{Introduction}\label{sec:intro}
In many fields of mathematics containing representation theory, it is basic to study how to construct related abelian categories from a given triangulated category $\C$.
There are many researches in this context, for example \cite{BBD, KR07, KZ08, Nak11, AN12, BM13a, BM13b, Bel13, Nak13, Pal14, LN19, HS20}.
Our study has its origin in the following result in \cite[Thm. 3.3]{KZ08}: If $T$ is a cluster-tilting object in a triangulated category $\C$, then the additive quotient $\C/\add T$ is equivalent to $\mod\End_\C(T)^{\op}$ the module category of finitely presented right $\End_\C(T)^{\op}$-modules (see also \cite{KR07}).

Afterwards, Buan and Marsh pointed out that, as long as $T$ is a rigid object in $\C$, there exists a localization functor from $\C$ to $\mod\End_\C(T)^{\op}$, generalizing Koenig-Zhu's set-up.
More precisely, in \cite{BM13a}, they firstly found a class $\mathbb{S}$ of morphisms with an explicit description such that the associated localization $\mathsf{L}_\mathbb{S}:\C\to\C[\mathbb{S}^{-1}]$ induces an equivalence $\C[\mathbb{S}^{-1}]\simeq \mod\End_\C(T)^{\op}$.
Following this, in \cite{BM13b}, they factorized the above localization $\mathsf{L}_\mathbb{S}$ as the composition of two localizations:
\begin{equation}\label{Buan-Marsh's_localization}
\xymatrix{
\C\ar[rd]_{\varpi}\ar[rr]^{\overset{\textnormal{\cite{BM13a}}}{\mathsf{L}_\mathbb{S}}}&&\mod\End_\C(T)^{\op}\\
&\C/(T^{\perp_1})\ar[ru]_{\underset{\textnormal{\cite{BM13b}}}{\mathsf{Loc}}}&
}
\end{equation}
where $\varpi$ denotes an additive quotient.
Under additional assumptions on $\C$, there are many advantages of this factorization, since $\C/(T^{\perp_1})$ is preabelian and the second one $\mathsf{Loc}$ is a Gabriel-Zisman localization admitting a calculus of left and right fractions.
Their construction of preabelian categories was improved by many authers, e.g. \cite{Bel13, Nak13, Liu13, LN19, HS20}.
On the other hand, before \cite{BM13a, BM13b}, the category $\mod\End_\C(T)^{\op}$ has already been realized as a subfactor of $\C$ in \cite{IY08}, namely, there exists an equivalence 
\begin{equation}\label{Iyama-Yoshino's_subfactor}
\frac{\add (T[-1])*\add T}{\add T}\xto{\sim}\mod\End_\C(T)^{\op},
\end{equation}
where $[-1]$ is a desuspension of $\C$.

As another generalization of Koenig-Zhu's construction, Abe and Nakaoka provided a new construction of a related abelian category $\H/\W$ from a given triangulated category $\C$ equipped with a cotorsion pair $(\S,\V)$ in \cite{AN12}.
Their method also generalizes the construction of the heart of a $t$-structure proved in \cite{BBD} (see \cite[Prop. 2.6]{Nak11} for details).
So their abelian category $\H/\W$ is still called the heart of $(\S,\V)$.
As we will see in Subsection \ref{ssec:from_CP_to_gHTCP}, Abe-Nakaoka's heart construction is a generalization of Iyama-Yoshino's subfactor (\ref{Iyama-Yoshino's_subfactor}).
So it is natural to ask whether the heart $\H/\W$ can be realized as a nice Gabriel-Zisman localization of $\C$.
This question has already arisen in \cite[Section 6]{BM13a} and \cite[Section 6]{BM13b}, and some answers were obtained (see Section \ref{sec:related_work}).

The aim of this article is to give a more complete answer to Buan-Marsh's question in connection with Iyama-Yoshino's subfactor, which unifies and improves some related results (e.g. \cite[Thm. 6.3]{Nak13}, \cite[Thm. 5.6]{HS20}), where the method completely differs from them.
Our method is a modification of Nakaoka-Palu's localization via Hovey twin cotorsion pair (HTCP localization) which was inspired from Hovey triple \cite{Hov02, Gil11} and introduced in \cite{Nak18, NP19}.
They firstly introduced a notion of extriangulated category which is a simultaneous generalization of exact category and triangulated one.
The HTCP localization turns out to be a Gabriel-Zisman localization of an extriangulated category which can be realized as a subfactor of the original one, and covers many important phenomena, e.g. recollement of triangulated categories, Happel's (projectively) stable category of a Frobenius category \cite{Hap88} and Iyama-Yoshino's triangulated structure via mutation pairs \cite{IY08}.
Our generalized HTCP localization is still a Gabriel-Zisman localization of an extriangulated category which is equivalent to a subfactor.
It covers a wider class of localizations containing stable category with respect to a class of objects and recollement of abelian categories under a certain assumption.

\begin{mainthm}[Theorem \ref{thm:RL=LR_implies_universality}]\label{mainthm:A}
Let $((\S,\T),(\U,\V))$ be a generalized HTCP in an extriangulated category $\C$.
Then, there exists a class $\mathbb{V}$ of morphisms in $\C$ such that the associated Gabriel-Zisman localization $\mathsf{L}_\mathbb{V}:\C\to\C[\mathbb{V}^{-1}]$ induces an equivalence
$$\Phi:\frac{\T\cap\U}{\S\cap\V}\xto{\sim}\C[\mathbb{V}^{-1}].$$
\end{mainthm}

Note that there are two similar pictures below deduced from different set-ups in (\ref{Buan-Marsh's_localization}), (\ref{Iyama-Yoshino's_subfactor}) and Theorem \ref{mainthm:A}:
\begin{equation}\label{picture_of_this_article}
\xymatrix@!C=36pt@R=32pt{
&T\textnormal{:rigid}\ar@{~>}[rd]^{\textnormal{Buan-Marsh}}\ar@{~>}[ld]_{\textnormal{Iyama-Yoshino}}&&&((\S,\T),(\U,\V))\ar@{~>}[rd]\ar@{~>}[ld]&\\
\frac{\add T[-1]*\add T}{\add T}\ar@{->}[rr]^\sim&&\C[\mathbb{S}^{-1}]&\frac{\T\cap\U}{\S\cap\V}\ar@{->}[rr]^\sim&&\C[\mathbb{V}^{-1}]
}
\end{equation}
This article was motivated by the above analogy.
Our second result shows that the heart construction can be regarded as a generalized HTCP localization.

\begin{mainthm}[Corollary \ref{cor:heart_is_gHTCP}]
\label{mainthm:B}
Let $\C$ be a triangulated category equipped with a cotorsion pair $(\S,\V)$.
Then, there exists a class $\mathbb{V}$ of morphisms in $\C$ such that the associated localization $\mathsf{L}_\mathbb{V}:\C\to\C[\mathbb{V}^{-1}]$ induces an equivalence $\Phi:\H/\W\xto{\sim}\C[\mathbb{V}^{-1}]$.
\end{mainthm}

Furthermore, we factorize the localization $\mathsf{L}_\mathbb{V}$ into nice localizations, following Buan-Marsh's investigation.

\begin{mainthm}[Theorem \ref{thm:generalized_BM2}]
\label{mainthm:C}
Let $\C$ be the above.
Assume that $\S*\V$ is functorially finite.
Then,
\begin{enumerate}
\item[\textnormal{(1)}] The additive quotient $\C/\add(\S*\V)$ is preabelian and the class $\mathbb{R}$ of regular morphisms in it admits a calculus of left and right fractions;
\item[\textnormal{(2)}] The localization $\mathsf{L}_\mathbb{V}:\C\to\H/\W$ is factored as the composition of the additive quotient $\varpi:\C\to\C/\add(\S*\V)$ and the Gabriel-Zisman localization $\mathsf{L}_\mathbb{R}: \C/\add(\S*\V)\to \H/\W$ with respect to the class $\mathbb{R}$.
\end{enumerate}
\end{mainthm}

This article is organized as follows:
Section \ref{sec:preliminary} will be devoted to prepare basic results on extriangulated category, Gabriel-Zisman localization and preabelian category.
Section \ref{sec:gHTCP} contains the first main result of this article.
Here, we formulate a generalized HTCP localization and prove Theorem \ref{mainthm:A}.
In Section \ref{sec:heart}, using the generalized HTCP, we investigate two different  aspects of the heart in a triangulated category $\C$, i.e., a subfactor and a localization of $\C$, and then Theorems \ref{mainthm:B} and \ref{mainthm:C} are proved.
Finally, in Section \ref{sec:related_work}, we mention another related approach.

\subsection*{Notation and convention}
The symbol $\C$ always denotes a category, and the set of morphisms $X\rightarrow Y$ in $\C$ is denoted by $\C(X,Y)$ or simply denoted by $(X,Y)$ if there is no confusion.
The class of objects (resp. morphisms) in $\C$ is denoted by $\Ob\C$ (resp. $\Mor\C$).
We denote by $\C^{\op}$ the opposite category.
If there exists a fully faithful functor $\N\hookrightarrow\C$,
we often regard $\N$ as a full subcategory of $\C$.
If a given category $\C$ is additive, its subcategory $\U$ is always assumed to be full, additive and closed under isomorphisms, and we only consider additive functors between them.
For $X\in\C$, if $\C(U,X)=0$ for any $U\in\U$, we write abbreviately $\C(\U,X)=0$.
Similar notations will be used in many places.
For an additive functor $F:\C\rightarrow \D$,
we define the \textit{image} and \textit{kernel} of $F$ as the full subcategories
$$\Im F:=\{Y\in\D\mid {^\exists X\in\C},\  F(X)\cong Y\}\ \ \textnormal{and}\ \  \Ker F:=\{X\in\C\mid F(X)=0\},$$
respectively.
For a full subcategory $\U$ in $\C$, the symbol $F|_{\U}$ denotes the restriction of $F$ on $\U$.

\section{Preliminary}\label{sec:preliminary}

\subsection{Extriangulated category}\label{ssec:extriagulated_category}
In this section, we recall results necessary for our purpose and terminology on extriangulated categories.
An \textit{extriangulated category} is defined to be a triple $(\C,\mathbb{E},\mathfrak{s})$ of
\begin{itemize}
\item an additive category $\C$;
\item an additive bifunctor $\mathbb{E}:\C^{\op}\times\C\to \Ab$, where $\Ab$ is the category of abelian groups;
\item a correspondence $\mathfrak{s}$ which associates each equivalence class of  a sequence  of the form $X\to Y\to Z$ in $\C$ to an element in $\mathbb{E}(Z,X)$ for any $Z,X\in\C$,
\end{itemize}
which satisfies some `additivity' and `compatibility'.
It is simply denoted by $\C$ if there is no confusion.
We refer to \cite[Section 2]{NP19} for its detailed definition.
An extriangulated category was introduced to unify triangulated category and exact one.
More precisely, by putting $\mathbb{E}:=\C(-,-[1])$, a triangulated category $(\C,[1],\Delta)$ can be regarded as an extriangulated category \cite[Prop. 3.22]{NP19}.
By putting $\mathbb{E}:=\Ext^1_\C(-,-)$, an exact category $\C$ can be regarded as an extriangulated category \cite[Example 2.13]{NP19}.
We shall use the following terminology in many places.

\begin{definition}
Let $(\C,\mathbb{E},\mathfrak{s})$ be an extriangulated category.
\begin{enumerate}
\item[\textnormal{(1)}] We call an element $\delta\in\mathbb{E}(Z, X)$ an \textit{$\mathbb{E}$-extension}, for any $X,Z\in\C$;
\item[\textnormal{(2)}] A sequence $X\xto{f}Y\xto{g}Z$ corresponding to an $\mathbb{E}$-extension $\delta\in\mathbb{E}(Z,X)$ is called a \textit{conflation}.
 In addition, $f$ and $g$ are called an \textit{inflation} and a \textit{deflation}, respectively.
\item[\textnormal(3)] An object $P\in\C$ is said to be \textit{projective} if for any deflation $g:Y\to Z$, the induced morphism $\C(P,g):\C(P,Y)\to\C(P,Z)$ is surjective.
We denote by $\proj(\C)$ the subcategory of projectives in $\C$.
An \textit{injective} object and $\inj(\C)$ are defined dually.
\item[\textnormal(4)] We say that \textit{$\C$ has enough projectives} if for any $Z\in\C$, there exists a conflation $X\to P\to Z$ with $P$ projective.
\textit{Having enough injectives} is defined dually.
\end{enumerate}
\end{definition}

Keeping in mind the triangulated case, we introduce the notions cone and cocone.

\begin{proposition}
Let $\C$ be an extriangulated category.
For an inflation $f\in\C(X,Y)$, take a conflation $X\xto{f} Y\xto{g} Z$, and
denote this $Z$ by $\cone(f)$.
We call $\cone(f)$ a \textnormal{cone} of $f$.
Similarly, we denote the object $X$ by $\cocone(g)$ and call it a \emph{cocone} of $g$.
For a given inflation $f$ (resp. deflation $g$), $\cone(f)$ (resp. $\cocone(g)$) is uniquely determined up to isomorphism.
\end{proposition}

Furthermore, for any subcategories $\U$ and $\V$ in $\C$, we define a full subcategory $\cone(\V,\U)$ to be the one consisting of objects $X$ appearing in a conflation $V\to U\to X$ with $U\in\U$ and $V\in\V$.
A subcategory $\cocone(\V,\U)$ is defined dually.
Next, we recall the notion of weak pullback in extriangulated categories.
Consider a conflation $X\xto{f} Y\xto{g} Z$ corresponding to $\mathbb{E}$-extension $\delta\in\mathbb{E}(Z,X)$ and a morphism $z:Z'\to Z$.
Put $\delta':=\mathbb{E}(z,X)(\delta)$ and consider a corresponding conflation $X\to E\to Z'$.
Then, there exists a commutative diagram of the following shape
$$
\xymatrix{
X\ar[r]\ar@{=}[d]&E\ar@{}[rd]|{\textnormal{(Pb)}}\ar[r]\ar[d]&Z'\ar[d]^{z}\\
X\ar[r]^f&Y\ar[r]^g&Z
}
$$
The commutative square (Pb) is called a \textit{weak pullback of $g$ along $z$} which is a generalization of the pullback in exact categories and the homotopy pullback in triangulated categories.
The dual notion \textit{weak pushout} exists and it will be denoted by (Po) (see \cite[Cor. 3.16]{NP19} for details).

We end the section by mentioning that the class of extriangulated categories is closed under certain operations.

\begin{proposition}\cite[Rem. 2.18, Prop. 3.30]{NP19}\label{prop:closed_under_operation}
Let $\C$ be an extriangulated category.
\begin{enumerate}
\item[\textnormal{(1)}] Any extension-closed subcategory admits an extriangulated structure induced from that of $\C$.
\item[\textnormal{(2)}] Let $\I$ be a full additive subcategory, closed under
isomorphisms which satisfies $\I\subseteq\proj(\C)\cap\inj(\C)$, then the additive quotient $\C/\I$ has an extriangulated structure, induced from that of $\C$ (see Example \ref{ex:univ_of_stable_cat} for the definition of additive quotient).
\end{enumerate}

\end{proposition}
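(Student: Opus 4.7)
For part (1), my plan is to take the obvious restrictions: set $\mathbb{E}_\N := \mathbb{E}|_{\N^{\op}\times\N}$, and define $\mathfrak{s}_\N$ by declaring $\mathfrak{s}_\N(\delta) := \mathfrak{s}(\delta)$ for any $\delta \in \mathbb{E}_\N(Z,X)$ with $X,Z \in \N$. The key observation is that extension-closedness of $\N$ forces the middle term of any such realization $X\to Y\to Z$ to lie in $\N$, so $\mathfrak{s}_\N$ genuinely takes values in equivalence classes of $\N$-sequences. Additivity and biexactness of $\mathbb{E}_\N$ are inherited from $\mathbb{E}$. The axioms of a realization then transfer, since any morphism of conflations in $\C$ whose endpoints live in $\N$ can be read entirely inside $\N$.

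For part (2), the first step is to show that $\mathbb{E}$ descends to the quotient. The claim is that $I \in \I$ satisfies $\mathbb{E}(I,-) = 0$: any $\delta \in \mathbb{E}(I,X)$ is realized by a conflation $X\to Y\to I$, and since $I$ is projective, the identity of $I$ lifts along the deflation, which forces $\delta$ to split, hence $\delta = 0$. Dually $\mathbb{E}(-,I) = 0$. Consequently, if $f: X \to X'$ factors as $X\to I \to X'$, then $\mathbb{E}(Z,f)$ factors through $\mathbb{E}(Z,I) = 0$ and vanishes, and symmetrically for morphisms in the first argument. Thus $\mathbb{E}$ is annihilated by the ideal $[\I]$ defining $\C/\I$, and so factors uniquely through a bifunctor $\overline{\mathbb{E}} : (\C/\I)^{\op} \times \C/\I \to \Ab$. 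Now define $\overline{\mathfrak{s}}(\delta)$ to be the equivalence class in $\C/\I$ of a representative conflation $X\to Y\to Z$ for $\delta$ in $\C$; well-definedness follows because the quotient functor sends isomorphisms in $\C$ to isomorphisms in $\C/\I$.

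The remaining work is to verify the extriangulated axioms for $(\C/\I, \overline{\mathbb{E}}, \overline{\mathfrak{s}})$. For every axiom I would lift the morphisms in $\C/\I$ to morphisms in $\C$, apply the corresponding axiom of $\C$ to obtain a suitable commutative diagram involving a conflation, and then project back to $\C/\I$. The vanishing of $\mathbb{E}$ on morphisms factoring through $\I$ ensures that different choices of lifts yield the same $\overline{\mathbb{E}}$-extension.

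The main obstacle in both parts is axiom (ET4) and its dual, that is, the $3\times 3$-compatibility of conflations. In (1), one must check that every new object appearing in the $3\times 3$-diagram still lies in $\N$, which requires iterating the extension-closedness hypothesis on the newly constructed conflations. In (2), the delicate point is that a commutative square in $\C/\I$ need not lift on the nose to a commutative square in $\C$; one must correct the lift by a morphism factoring through $\I$, and then invoke projectivity/injectivity of $\I$ together with the vanishing of $\mathbb{E}$ on $[\I]$ to conclude that the resulting $\overline{\mathbb{E}}$-extension is independent of the lift chosen. Once this correction procedure is in place, (ET4) in $\C/\I$ reduces to (ET4) in $\C$.
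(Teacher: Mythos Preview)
The paper does not provide its own proof of this proposition: it is stated with a citation to \cite[Rem.~2.18, Prop.~3.30]{NP19} and left unproved. Your outline follows precisely the standard argument one finds in that reference, so there is nothing to compare against here.

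That said, your sketch is correct. A couple of minor remarks. In part~(2), your claim that a split conflation represents the zero element of $\mathbb{E}$ is exactly \cite[Cor.~3.5]{NP19}, so the argument that $\mathbb{E}(I,-)=0=\mathbb{E}(-,I)$ goes through. For the well-definedness of $\overline{\mathfrak{s}}$, note that you need slightly more than ``the quotient functor sends isomorphisms to isomorphisms'': two equivalent realizations of $\delta$ in $\C$ differ by an isomorphism of middle terms compatible with the inflation and deflation, and you must check that the resulting equivalence class in $\C/\I$ depends only on $\delta$ and not on the chosen representative; this is immediate once you observe that the quotient functor is additive and preserves the shape of the diagram. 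Finally, for (ET4) in the quotient, your ``correction by a morphism through $\I$'' is indeed the crux, and the point that makes it work is that projectivity of $\I$ lets you absorb such a correction into the deflation (and dually injectivity into the inflation), so that the lifted square can be made genuinely commutative in $\C$ before applying (ET4) there.
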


\subsection{Gabriel-Zisman localization}\label{ssec:Gabriel-Zisman_localization}
Since we are interested in Gabriel-Zisman localizations of extriangulated categories, we recall its definition, following \cite{Fri08} (see also \cite{GZ67}).

\begin{definition}
Let $\C$ and $\D$ be categories and $\mathbb{S}$ a class of morphisms in $\C$.
A functor $\mathsf{L}_\mathbb{S}:\C\to\D$ is called a \textit{Gabriel-Zisman localization of $\C$ with respect to $\mathbb{S}$} if the following universality holds:
\begin{enumerate}
\item[\textnormal{(1)}] $\mathsf{L}_\mathbb{S}(s)$ is an isomorphism in $\D$ for any $s\in\mathbb{S}$;
\item[\textnormal{(2)}] For any functor $F:\C\to\D'$ which sends each morphism in $\mathbb{S}$ to an isomorphism in $\D'$, there uniquely, up to isomorphism, exists a functor $F':\D\to\D'$ such that $F\cong F'\circ \mathsf{L}_\mathbb{S}$.
\end{enumerate}
In this case, we denote by $\mathsf{L}_\mathbb{S}:\C\to\C[\mathbb{S}^{-1}]$.
\end{definition}

The Gabriel-Zisman localization $\C[\mathbb{S}^{-1}]$ always exists provided there are no set-theoretic obstructions.
So, whenever we consider the Gabriel-Zisman localization of $\C$, we assume that $\C$ is skeletally small.
Morphisms in the new category $\C[\mathbb{S}^{-1}]$ can be regarded as compositions of the original morphisms and the formal inverses.
We refer to \cite[Thm. 2.1]{Fri08} for an explicit construction of $\C[\mathbb{S}^{-1}]$.
If the class $\mathbb{S}$ satisfies the following conditions and its dual ones, 
any morphism in $\C[\mathbb{S}^{-1}]$ has a very nice description, see \cite[Section I.2]{GZ67} for details.

\begin{definition}
Let $\mathbb{S}$ be a class of morphisms in $\C$.
\begin{enumerate}
\item[(RF1)] The identity morphisms of $\C$ lie in $\mathbb{S}$ and $\mathbb{S}$ is closed under composition.
\item[(RF2)] Any diagram of the form:
$$
\xymatrix@R=16pt@C=16pt{
&B\ar[d]^f\\
C\ar[r]^s&D
}
$$
with $s\in\mathbb{S}$ can be completed in a commutative square of the form:
$$
\xymatrix@R=16pt@C=16pt{
A\ar[r]^{s'}\ar[d]_{f'}&B\ar[d]^f\\
C\ar[r]^s&D
}
$$
with $s'\in\mathbb{S}$.
\item[(RF3)] If $s:Y\to Y'$ in $\mathbb{S}$ and $f,f':X\to Y$ are morphisms such that $sf=sf'$, then there exists $s':X'\to X$ in $\mathbb{S}$ such that $fs'=f's'$.
\end{enumerate}
If the above and the dual conditions are satisfied, we say that $\mathbb{S}$ or $\mathsf{L}_\mathbb{S}$ \textit{admits a calculus of left and right fractions}.
\end{definition}

As is well-known, the Verdier localization of a triangulated category $\C$ with respect to its thick subcategory $\N$ is defined to be a Gabriel-Zisman localization which admits a calculus of left and right fractions.
More precisely, it is the Gabriel-Zisman localization of $\C$ with respect to the class of morphisms whose cones belong to $\N$ (e.g. \cite{Nee01}).
Similarly, the Serre localization of an abelian category is also an example of such Gabriel-Zisman localizations (e.g. \cite{Pop73}).
The following example will play an important role in Section \ref{sec:gHTCP}.

\begin{example}\label{ex:univ_of_stable_cat}
Let $\C$ be an additive category and $\I$ its full subcategory closed under direct summands.
We define the \textit{stable category $\C/\I$ of $\C$ with respect to $\I$} as the ideal quotient of $\C$ modulo the (two-sided) ideal in $\C$ consisting of all morphisms having a factorization through an object in $\I$.
This is also called the \textit{additive quotient of $\C$ with respect to $\I$}.
Consider the class $\mathbb{S}$ of all sections whose cokernels belonging to $\I$, i.e., the morphisms $s$ appearing in some splitting short exact sequence of the form $0\to X\xto{s} X\oplus I\to I\to 0$ with $I\in\I$.
Then, $\C/\I$ is equivalent to the Gabriel-Zisman localization $\C[\mathbb{S}^{-1}]$.
\end{example}
\begin{proof}
This is well-known for experts, but we can not find proper references.
So we include a detailed proof here.

It suffices to show that the additive quotient $\pi:\C\to\C/\I$ has the same universality as that of $\mathsf{L}_\mathbb{S}:\C\to\C[\mathbb{S}^{-1}]$.
To this end, we consider a functor $F:\C\to\D$ which sends any morphisms of $\mathbb{S}$ to isomorphisms in $\D$.
Note that we do not assume that $F$ is additive.

(1) It is obvious that $\pi(s)$ is an isomorphism in $\C/\I$ whenever $s$ belongs to $\mathbb{S}$.

(2) We shall show the existence of a functor $F':\C/\I\to\D$ with $F\cong F'\circ \pi$ and its uniqueness up to isomorphism.
Define such a functor $F'$ as follows: For any $X\in\Ob(\C/\I)=\Ob\C$, we set $F'(X):=F(X)$; For any morphism $\pi(f):X\to Y$ in $\C/\I$, we set $F'(\pi(f)):=F(f)$.
To check the well-definedness, we assume that $\pi(f)=\pi(g)$ where $f, g:X\to Y$ are morphisms in $\C$, that is,
the morphism $f-g$ factors through an object $I\in\I$ as $f-g:X\xto{a}I\xto{b}Y$.
Then we get $f=\begin{pmatrix}
g&b
\end{pmatrix}
\begin{pmatrix}
1\\
a
\end{pmatrix}
$
and consider the following diagram in $\C$:
$$
\xymatrix{
X\ar[r]^{\tiny\begin{pmatrix}
1\\
a
\end{pmatrix}}\ar@{=}[rd]&X\oplus I\ar[r]^{\tiny\begin{pmatrix}
g&b
\end{pmatrix}}&Y\\
&X\ar[ur]_g\ar[u]_{\tiny\begin{pmatrix}
1\\
0
\end{pmatrix}}&
}
$$
Notice that the left half of the above diagram is not commutative.
However, applying $F$ makes it commute.
In fact, the projection $
\tiny\begin{pmatrix}
1&0
\end{pmatrix}
:X\oplus I\to X$
is a left inverse of $\tiny\begin{pmatrix}
1\\
a
\end{pmatrix}$ and
$\tiny\begin{pmatrix}
1\\
0
\end{pmatrix}$.
Since both $F\tiny\begin{pmatrix}
1\\
a
\end{pmatrix}$ and $F\tiny\begin{pmatrix}
1\\
0
\end{pmatrix}$
are isomorphisms in $\D$,
$F\tiny\begin{pmatrix}
1&0
\end{pmatrix}$ is an inverse of them.
It guarantees $F\tiny\begin{pmatrix}
1\\
a
\end{pmatrix}=F\tiny\begin{pmatrix}
1\\
0
\end{pmatrix}$ in $\D$.
Hence we have a desired equality
$$F(f)=F\tiny\begin{pmatrix}
g&b
\end{pmatrix}\circ F\tiny\begin{pmatrix}
1\\
a
\end{pmatrix}=
F\tiny\begin{pmatrix}
g&b
\end{pmatrix}\circ F\tiny\begin{pmatrix}
1\\
0
\end{pmatrix}=F(g).
$$
The commutaivity $F=F'\circ\pi$ automatically holds.

Since $\pi$ is full and dense, the uniqueness of $F'$ is satisfied. This finishes the proof.
\end{proof}

\subsection{Preabelian category}\label{ssec:preabelian_category}
We recall some basic properties of preabelian categories which will be used in Subsection \ref{ssec:factorization}, following \cite{Rum01} (see also \cite{BM13a}).
An additive category $\A$ is called a \textit{preabelian category} if any morphism has a kernel and a cokernel.
A morphism is said to be \textit{regular} if it is both an epimorphism and a monomorphism.
Note that pullbacks and pushouts always exist in a preabelian category $\A$.
In fact, for morphisms $C\xto{d}D\xleftarrow{c}B$, we take the kernel sequence $A\to B\oplus C\xto{(c\ -d)} D$ to obtain the pullback of $d$ along $c$:
\begin{equation}\label{diag:pullback}
\xymatrix@R=16pt@C=16pt{
A\ar@{}[rd]|{\textnormal{(Pb)}}\ar[d]_{b}\ar[r]^{a}&B\ar[d]^{c}\\
C\ar[r]_{d}&D
}
\end{equation}

\begin{definition}
A preabelian category $\A$ is said to be \textit{left integral} if
for any pullback (\ref{diag:pullback}), $a$ is an epimorphism whenever $d$ is an epimorphism.
The \textit{right integrality} is defined dually.
An \textit{integral} category is defined as a preabelian category which is both left integral and right integral.
\end{definition}

The following provides a nice connection between integral categories and abelian categories.

\begin{proposition}\label{prop:localization_of_integral}\cite[p. 173]{Rum01}
For an integral category $\A$, the following hold.
\begin{enumerate}
\item[\textnormal{(1)}] The class $\mathbb{R}$ of regular morphisms admits a calculus of left and right fractions.
\item[\textnormal{(2)}] The localization functor $\mathsf{L}_\mathbb{R}:\A\to\A[\mathbb{R}^{-1}]$ is additive.
\item[\textnormal{(3)}] The category $\A[\mathbb{R}^{-1}]$ is abelian.
\end{enumerate}
\end{proposition}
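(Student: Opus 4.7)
The plan is to verify the three claims in order. For (1), I would check (RF1), (RF2), (RF3) and their duals for the class $\mathbb{R}$ of regular morphisms. (RF1) is immediate: identities are regular, monos compose to monos, and epis compose to epis, so $\mathbb{R}$ is closed under composition. For (RF2), given $f:B\to D$ and a regular $s:C\to D$, I would form the pullback square that the preabelian structure provides via the kernel of $(c, -d)=(f,-s):B\oplus C\to D$ as in diagram (\ref{diag:pullback}); the pulled-back morphism $s':A\to B$ is still a monomorphism because monos are stable under pullback in any category with pullbacks, and it is an epimorphism by left integrality applied to the epi $s$, so $s'\in\mathbb{R}$. The dual of (RF2) is obtained symmetrically from the pushout construction together with right integrality. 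Finally, (RF3) is trivial because $s\in\mathbb{R}$ is in particular a monomorphism, so $sf=sf'$ forces $f=f'$, and one can take $s'=\textnormal{id}$; its dual follows because regulars are also epimorphisms.

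For (2), once the calculus of fractions is in place, the standard construction represents morphisms in $\A[\mathbb{R}^{-1}]$ as equivalence classes of right fractions $fs^{-1}$, and the sum of two fractions is defined via a common denominator produced by (RF2). I would check that this sum is well-defined on equivalence classes and that it makes composition biadditive. Additivity of $\mathsf{L}_\mathbb{R}$ then follows because $\A$ is additive, $\mathbb{R}$ contains identities and is closed under direct sums of morphisms, and the canonical maps out of biproducts in $\A$ have regular sections that become isomorphisms in the localization.

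For (3), the heart of the argument is to show that $\A[\mathbb{R}^{-1}]$ has kernels and cokernels computed from those of $\A$, and that the natural morphism from the coimage to the image of any morphism becomes an isomorphism. Given a fraction $fs^{-1}$ with $f:X\to Y$ and $s:X'\to X$ in $\mathbb{R}$, one takes the kernel and cokernel of $f$ in $\A$ and checks, using the calculus of fractions, that the resulting objects are independent of the choice of representative. The crucial — and most delicate — step is the following: for any $f:X\to Y$ in $\A$, consider its factorization $X\twoheadrightarrow\mathrm{Coim}(f)\xrightarrow{\varphi}\mathrm{Im}(f)\hookrightarrow Y$ through the coimage and image; I would show, using left integrality to identify $\mathrm{Coim}(f)$ via the pullback of $\mathrm{Im}(f)\hookrightarrow Y$ along $f$, and right integrality dually, that $\varphi$ is regular in $\A$ itself, and hence becomes an isomorphism in $\A[\mathbb{R}^{-1}]$. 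I expect this comparison step to be the main obstacle: verifying that the two integrality conditions conspire to make $\varphi$ simultaneously monic and epic in $\A$, not merely in the localization, so that the classical recognition criterion for an abelian category applies to $\A[\mathbb{R}^{-1}]$.
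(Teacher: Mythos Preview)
The paper does not supply a proof of this proposition; it is quoted from \cite[p.~173]{Rum01} and used as a black box in Subsection~\ref{ssec:factorization}. There is therefore no ``paper's own proof'' to compare against.

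Your outline is essentially Rump's argument. Part (1) is correct as written: pullbacks preserve monomorphisms in any category, and left integrality is precisely the statement that they preserve epimorphisms here, so the pullback of a regular morphism is regular, yielding (RF2); (RF3) is vacuous since regulars are monic. Part (2) is the standard additivity argument for localizations admitting a calculus of fractions. For (3) you have correctly isolated the crux --- regularity of the comparison map $\varphi:\mathrm{Coim}(f)\to\mathrm{Im}(f)$ in $\A$ itself --- but your one-line justification (``using left integrality to identify $\mathrm{Coim}(f)$ via the pullback of $\mathrm{Im}(f)\hookrightarrow Y$ along $f$'') is too compressed to stand as a proof; the verification that $\varphi$ is simultaneously monic and epic uses both integrality hypotheses in a slightly indirect way, and one also has to check that kernels and cokernels computed in $\A$ remain kernels and cokernels after applying $\mathsf{L}_\mathbb{R}$. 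As you anticipate, this is where the genuine work lies.
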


\section{Localization via generalized Hovey twin cotorsion pair (gHTCP)}\label{sec:gHTCP}

\subsection{Definition of gHTCP}\label{ssec:def_of_gHTCP}
Throughout this section, the symbol $\C=(\C,\mathbb{E},\mathfrak{s})$ is an extriangulated category.
For a subcategory $\U\subseteq \C$ and an object $X\in\C$, a morphism $U\to X$ from an object $U\in\U$ is called a \emph{right $\U$-approximation} of $X$ if the induced morphism $\C(U', U)\to \C(U', X)$ is an epimorphism for any $U'\in\U$.
The dual notion \emph{left $\U$-approximation} exists.

Let $(\U,\V)$ be a pair of full subcategories in $\C$ which are closed under isomorphisms and direct summands.
We introduce the notion of \emph{right/left cotorsion pair} as follows.

\begin{definition}\label{def:u-deflation}
Let $X$ be an object in $\C$.
A right $\U$-approximation $p_X:U_X\to X$ of $X$ is said to be a \emph{$\U$-deflation} of $X$ if it is a deflation and $\cocone(p_X)\in\V$.
Dually, a left $\V$-approximation $\iota_X:X\to V^X$ of $X$ is said to be a \emph{$\V$-inflation} of $X$ if it is an inflation and $\cone(\iota_X)\in\U$.
\end{definition}

\begin{definition}\label{def:right/left_cotorsion_pair}
\mbox{}
\begin{enumerate}
\item
The pair $(\U,\V)$ of subcategories in $\C$ is called a \emph{right cotorsion pair}
if it satisfies the following conditions:
\begin{enumerate} 
\item
For any $X\in\C$, there exists a conflation
\begin{equation*}
V_X\to U_X\xto{p_X} X
\end{equation*}
where $p_X$ is a $\U$-deflation of $X$. In this case, we say that \textit{$X$ is resolved by $(\U,\V)$};
\item
Any $f\in\C(U,V)$ with $Y\in\U$ and $V\in\V$ factors through an object in $\U\cap\V$.
\end{enumerate}
For a right cotorsion pair $(\U,\V)$, we denote by $\udef$ the class of $\U$-deflations $p_X$ for all $X\in\C$. 
\item
The pair $(\S,\T)$ of subcategories in $\C$ is called a \emph{left cotorsion pair} if it satisfies the following conditions:
\begin{enumerate} 
\item
For any $X\in\C$, there exists a conflation
\begin{equation*}
X\xto{\iota_X}T^X\to S^X
\end{equation*}
where $\iota_X$ is a $\T$-inflation of $X$.
In this case, we also say that \textit{$X$ is resolved by $(\S,\T)$};
\item
Any $f\in\C(S,T)$ with $S\in\S$ and $T\in\T$ factors through an object in $\S\cap\T$.
\end{enumerate}
For a left cotorsion pair $(\S,\T)$, we denote by $\tinf$ the class of $\T$-inflations $\iota_X$ for all $X\in\C$. 
\end{enumerate}
\end{definition}

\begin{lemma}\label{lem:adjoint_pair}
For a right cotorsion pair $(\U,\V)$ with $\I:=\U\cap\V$,
an assignment $R:X\mapsto U_X$ gives rise to a right adjoint $R:\C/\I\to\U/\I$ of the canonical inclusion $\U/\I\hookrightarrow\C/\I$.
Dually, for a left cotorsion pair $(\S,\T)$ with $\I:=\S\cap\T$, an assignment $L:X\mapsto T^X$ gives rise to a left adjoint $L:\C/\I\to\T/\I$ of the inclusion $\T/\I\to\C/\I$.
\end{lemma}
\begin{proof}
Fix a conflation $V_X\to U_X\xto{p_X} X$ with $p_X$ being a $\U$-deflation for each $X\in\C$ and consider an assignment  $R:X\mapsto U_X$.
Due to the condition (b) in Definition \ref{def:right/left_cotorsion_pair}, for any morphism $f\in\C(X,Y)$, we have a unique morphism $Rf:RX\to RY$ which makes the following diagram commutative in $\C/\I$:
\[
\xymatrix@R=16pt{
V_X\ar[r]&RX\ar[r]^{p_X}\ar[d]_{Rf}&X\ar[d]^f\\
V_Y\ar[r]&RY\ar[r]^{p_Y}&Y
}
\]
Thus, it is easily checked that the assignment $R$ gives rise to a functor $\C/\I\to\U/\I$.
The $\U$-deflation $p_X$ induces a surjective morphism $\C/\I(U,RX)\xto{p_X\circ -}\C/\I(U,X)$ for any $U\in\U$.
Again, due to the condition (b), the above morphism $p_X\circ -$ is injective.
Hence the functor $R:\C/\I\to\U/\I$ is a right adjoint of the inclusion $\U/\I\hookrightarrow \C/\I$.
The assertion for a left cotorsion pair can be proved dually.
\end{proof}

Obviously a right/left cotorsion pair is a weaker notion of a cotorsion pair which are firstly defined in abelian categories \cite{Sal79} and recently in extriangulated categories \cite{NP19}.

\begin{definition}
A pair $(\U,\V)$ is called a \emph{cotorsion pair} if it satisfies the following conditions:
\begin{enumerate}
\item Both $\U$ and $\V$ are closed under direct summands, isomorphisms and extensions;
\item $\mathbb{E}(\U,\V)=0$;
\item $\cone(\V,\U)=\C=\cocone(\V,\U)$.
\end{enumerate}
\end{definition}

Note that, since we do not require $\mathbb{E}$-orthogonality, a left and right cotorsion pair does not form a cotorsion pair in general.
In fact, $(\C,\C)$ is a left and right cotorsion pair.
Moreover, we should remark that some authors use the terminology left/right cotorsion pair in a different sense \cite{BMPS19, Gil11}.

Let $(\S,\T)$ and $(\U,\V)$ be a left cotorsion pair and a right cotorsion pair, respectively,
and denote by $\P:=((\S,\T), (\U,\V))$ the pair of them.
We consider the following two conditions:
\begin{enumerate}
\item[(Hov1)] $\S\subseteq \U$ and $\T\supseteq \V$;
\item[(Hov2)] $\S\cap\T=\U\cap\V$.
\end{enumerate}

In the rest of this article, \textit{let $\P$ denote the above pair satisfying \textnormal{(Hov1)} and \textnormal{(Hov2)}.}
The following notions will be used in many places:

\begin{inparaenum}
 \item $\I:=\S\cap\V$; 
 \item $\Z:=\T\cap\U$; 
 \item $\mathbb{V}:=\udef\circ\tinf$,
\end{inparaenum}\\
where $\udef\circ\tinf$ denotes the class of morphisms $f$ which has a factorization $f=f_2\circ f_1$ with $f_1\in\tinf$ and $f_2\in\udef$.
Note that $\I=\U\cap\V$ holds.

\begin{definition}
If $\P$ satisfies the following two conditions, it is called a \textit{Hovey twin cotorsion pair} (\textit{HTCP}) introduced in \cite{NP19}:
\begin{enumerate}
\item[(Hov3)] Both $(\S,\T)$ and $(\U,\V)$ are cotorsion pairs;
\item[(Hov4)] $\cone(\V,\S)=\cocone(\V,\S)$.
\end{enumerate}
\end{definition}

We shall study basic properties of $\P$ in this section.
Denote by $\pi:\C\to\C/\I$ the additive quotient.
First, we consider two functors $Q_{LR}:=LR\pi$ and $Q_{RL}:=RL\pi :\C\to\C/\I$
by composing a right adjoint $R:\C/\I\to\U/\I$ and a left adjoint $L:\C/\I\to\T/\I$.
We also consider a Gabriel-Zisman localization $\mathsf{L}_\mathbb{V}:\C\to\C[\mathbb{V}^{-1}]$ with respect to the class $\mathbb{V}$.
The following proposition draws a comparison between $\Z/\I$ and $\C[\mathbb{V}^{-1}]$, which is instrumental to formulate our main theorem.

\begin{proposition}\label{prop:key}
We assume that both $\Im Q_{LR}$ and $\Im Q_{RL}$ are contained in $\Z/\I$.
Then, there uniquely exists a functor $\Phi :\Z/\I\to\C[\mathbb{V}^{-1}]$ which makes the following diagram commutative up to isomorphism:
$$
\xymatrix@C=36pt@R=12pt{
&{\Z/\I}\ar@{..>}[rd]^\Phi&\\
{\C}\ar[ru]^{Q_{LR}}\ar[rd]_{Q_{RL}}\ar[rr]|{\mathsf{L}_\mathbb{V}}&&{\C[\mathbb{V}^{-1}]}.\\
&{\Z/\I}\ar@{..>}[ru]_\Phi&
}$$
\end{proposition}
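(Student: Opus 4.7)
The plan is to construct $\Phi$ as the descent of $\mathsf{L}_\mathbb{V}|_\Z$ through the additive quotient $\pi|_\Z : \Z \to \Z/\I$, and then to verify the two commuting triangles by threading each $X \in \C$ through $\Z$ via its canonical $\T$-inflation and $\U$-deflation; the principal difficulty will lie in showing that the ``outer'' approximation morphisms $p_X$ and $\iota_X$ become invertible in $\C[\mathbb{V}^{-1}]$, since they need not belong to $\mathbb{V}$ on the nose.

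First I would use Example~\ref{ex:univ_of_stable_cat} applied to $\Z$: the desired descent exists provided $\mathsf{L}_\mathbb{V}$ inverts every section $s : Z \to Z \oplus I$ with $Z \in \Z$ and $I \in \I$. Since \textnormal{(Hov1)}--\textnormal{(Hov2)} give $\I \subseteq \Z = \U \cap \T$, the object $Z \oplus I$ lies in $\Z$; the split conflation $Z \xto{s} Z \oplus I \to I$ then exhibits $s$ as a $\T$-inflation (cone $I \in \S$, and $s$ is a left $\T$-approximation by padding with zero), while $\id_{Z \oplus I} \in \udef$ because $Z \oplus I \in \U$. Hence $s = \id_{Z \oplus I} \circ s$ belongs to $\udef \circ \tinf = \mathbb{V}$, so $\mathsf{L}_\mathbb{V}|_\Z$ factors as $\Phi \circ \pi|_\Z$ with $\Phi(Z) := \mathsf{L}_\mathbb{V}(Z)$.

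Next I would establish $\Phi \circ Q_{LR} \cong \mathsf{L}_\mathbb{V}$. The canonical data is the zigzag
\[
X \xleftarrow{p_X} U_X \xto{\iota_{U_X}} T^{U_X} = Q_{LR}(X).
\]
The right arrow lies in $\mathbb{V}$: the hypothesis $\Im Q_{LR} \subseteq \Z/\I$ forces $T^{U_X} \in \Z \subseteq \U$, so $\id_{T^{U_X}} \in \udef$ and $\iota_{U_X} = \id_{T^{U_X}} \circ \iota_{U_X}$. To handle the left arrow $p_X$, I would bring in the dual zigzag $X \xto{\iota_X} T^X \xleftarrow{p_{T^X}} U_{T^X} = Q_{RL}(X)$, in which $p_{T^X} \in \mathbb{V}$ dually from $\Im Q_{RL} \subseteq \Z/\I$. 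The left $\T$-approximation property of $\iota_{U_X}$ produces a comparison $\phi : T^{U_X} \to T^X$ with $\phi \circ \iota_{U_X} = \iota_X \circ p_X$, and the right $\U$-approximation property of $p_{T^X}$ (applicable since $T^{U_X} \in \U$) further lifts $\phi$ to $\psi : T^{U_X} \to U_{T^X}$. Splicing these relations assembles a chain of $\mathbb{V}$-morphisms in $\C[\mathbb{V}^{-1}]$ linking $X$ and $T^{U_X}$, from which the natural isomorphism $\beta_X : \mathsf{L}_\mathbb{V}(X) \xto{\sim} \Phi(Q_{LR}(X))$ is extracted; the case $\Phi \circ Q_{RL} \cong \mathsf{L}_\mathbb{V}$ is dual.

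Finally, uniqueness is immediate from $Q_{LR}|_\Z \cong \mathrm{id}_{\Z/\I}$ (for $Z \in \Z$ one has $R(Z) \cong Z \cong L(Z)$): any $\Phi'$ satisfying $\Phi' \circ Q_{LR} \cong \mathsf{L}_\mathbb{V}$ must restrict on $\Z/\I$ to $\mathsf{L}_\mathbb{V}|_\Z$, hence coincide with $\Phi$ up to natural isomorphism. The hardest step, as anticipated, is the splicing construction: producing the explicit chain of $\mathbb{V}$-morphisms requires careful diagrammatic use of both image hypotheses simultaneously, and it is here that the interplay between the left and right cotorsion pairs is indispensable.
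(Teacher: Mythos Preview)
Your overall strategy coincides with the paper's: construct $\Phi$ by descending $\mathsf{L}_\mathbb{V}|_\Z$ through the quotient $\pi|_\Z:\Z\to\Z/\I$ via the universal property of Example~\ref{ex:univ_of_stable_cat} (this is Lemma~\ref{lem:for_key} in the paper), and then verify the triangle $\Phi\circ Q_{LR}\cong\mathsf{L}_\mathbb{V}$ by chasing the canonical zigzag $X\xleftarrow{p_X}RX\xto{\iota_{RX}}LRX$. Your verification that $\iota_{RX}\in\mathbb{V}$ (using $LRX\in\Z\subseteq\U$ so that $\id_{LRX}\in\udef$) is exactly the paper's reasoning, and your uniqueness argument is also the same.

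The divergence is in how you treat $p_X$. The paper simply records the zigzag and asserts that both $\mathsf{L}_\mathbb{V}(p_X)$ and $\mathsf{L}_\mathbb{V}(\iota_{RX})$ are isomorphisms ``since $\mathsf{L}_\mathbb{V}$ sends $\mathbb{V}$ to a class of isomorphisms''; it does not single out $p_X$ for special treatment. You, by contrast, correctly observe that $p_X$ need not lie in $\mathbb{V}=\udef\circ\tinf$ on the nose (its domain $RX$ is only in $\U$, not in $\T$), and you attempt a workaround. But your splicing argument has a genuine gap. From $\phi\circ\iota_{RX}=\iota_X\circ p_X$ and $p_{LX}\circ\psi=\phi$ you obtain $p_{LX}\circ\psi\circ\iota_{RX}=\iota_X\circ p_X$, with $\mathsf{L}_\mathbb{V}(\iota_{RX})$ and $\mathsf{L}_\mathbb{V}(p_{LX})$ invertible. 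This only shows that $\mathsf{L}_\mathbb{V}(\iota_X\circ p_X)$ equals an invertible morphism composed with $\mathsf{L}_\mathbb{V}(\psi)$; it does \emph{not} force any of $\mathsf{L}_\mathbb{V}(p_X)$, $\mathsf{L}_\mathbb{V}(\iota_X)$, or $\mathsf{L}_\mathbb{V}(\psi)$ to be invertible, and without one of these you have no isomorphism $\mathsf{L}_\mathbb{V}(X)\cong\mathsf{L}_\mathbb{V}(LRX)$. The ``chain of $\mathbb{V}$-morphisms linking $X$ and $T^{U_X}$'' you announce is never exhibited: every path you write down from $X$ to $LRX$ still passes through $p_X$ or $\iota_X$, so the argument is circular at precisely the step you flagged as the principal difficulty.
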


Before proving Proposition \ref{prop:key}, we check the next easy lemma.

\begin{lemma}\label{lem:for_key}
Let $\mathbb{V}_\Z$ be the class of morphisms in $\Z$ which consists of sections $f$ with $\Cok{f}\in\I$.
Then, a containment $\mathbb{V}_\Z\subseteq\mathbb{V}\cap\Mor\Z$ is true.
\end{lemma}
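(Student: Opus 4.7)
The plan is to show that every $f \in \mathbb{V}_\Z$ is itself a $\T$-inflation in $\C$, so that the trivial factorization $f = \mathrm{id} \circ f$ presents $f$ as an element of $\mathbb{V} = \udef \circ \tinf$, with the identity providing the $\U$-deflation factor.

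First, a section $f : X \to Y$ with $\Cok f =: I \in \I$ is, up to isomorphism, the canonical summand inclusion $X \to X \oplus I$, fitting into the split conflation $X \to X \oplus I \to I$ in $\C$ (split conflations correspond to the zero $\mathbb{E}$-extension). Since $f \in \Mor \Z$, both endpoints lie in $\Z = \T \cap \U$. Combined with (Hov1) and the equality $\I = \U \cap \V = \S \cap \V$ noted just before the lemma, this supplies $I \in \S \cap \T \cap \U \cap \V$, which gives all of the closure properties used below.

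Next I would verify that the split conflation above realizes $f$ as a $\T$-inflation: the middle term $X \oplus I$ lies in $\T$ (since $X, I \in \T$), the cokernel $I$ lies in $\S$, and $f$ is a left $\T$-approximation because any morphism $X \to T$ with $T \in \T$ extends to $X \oplus I \to T$ by zero on the $I$-summand. Hence $f \in \tinf$. Dually, the identity on $X \oplus I$ is a $\U$-deflation via the trivial conflation $0 \to X \oplus I \to X \oplus I$: its middle term lies in $\U$ (since $X, I \in \U$), its cocone $0$ lies in $\V$, and the identity is trivially a right $\U$-approximation. Composing, $f = \mathrm{id}_{X \oplus I} \circ f \in \udef \circ \tinf = \mathbb{V}$, and since $f \in \Mor \Z$ we conclude $f \in \mathbb{V} \cap \Mor \Z$.

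The proof is essentially a bookkeeping exercise combining (Hov1), the equality $\I = \U \cap \V = \S \cap \V$, and the defining properties of left and right cotorsion pairs. The only mildly non-trivial step is translating the set-theoretic condition \emph{``section with cokernel in $\I$''} into the functorial condition \emph{``left $\T$-approximation''}, which is handled by the zero-extension observation above; I do not anticipate any substantive obstacle.
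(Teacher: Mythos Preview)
Your proof is correct and follows essentially the same approach as the paper's own proof: show that any $f \in \mathbb{V}_\Z$ is already a $\T$-inflation, then compose with the identity $\U$-deflation on its codomain to land in $\mathbb{V}$. The paper compresses this to the single line ``$f$ is obviously a $\T$-inflation, thus $\mathbb{V}_\Z \subseteq \tinf$,'' leaving the identity factorization implicit; you have simply spelled out the verification that the paper omits.
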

\begin{proof}
Let $X\xto{f} Y\to I$ be a splitting short exact sequence in $\Z$ with $I\in\I$.
The morphism $f$ is obviously a $\T$-inflation.
Thus $\mathbb{V}_\Z\subseteq \tinf$.
\end{proof}

\begin{proof}[Proof of Proposition \ref{prop:key}]
Let us recall from Example \ref{ex:univ_of_stable_cat} that the additive quotient $\pi: \Z\to\Z/\I$ can be regarded as a Gabriel-Zisman localization of $\Z$ with respect to $\mathbb{V}_\Z$.
First, we consider the functor $\mathsf{L}_\mathbb{V}|_\Z:\Z\to\C[\mathbb{V}^{-1}]$ restricted onto $\Z$.
Since $\mathbb{V}_\Z\subseteq\mathbb{V}$,
by the universality of $\pi$, 
we have a unique functor $\Phi:\Z/\I\to\C[\mathbb{V}^{-1}]$ with $\mathsf{L}_\mathbb{V}|_\Z\cong \Phi\circ\pi$.
Thus we have the following diagram commutative up to isomorphism:
\begin{equation}\label{diag:main_prop}
\xymatrix{
\Z\ar[d]_\pi\ar[r]^{\mathsf{inc}}&\C\ar[r]^{\mathsf{L}_\mathbb{V}\ \ \ }&\C[\mathbb{V}^{-1}]\\
\Z/\I\ar@{..>}[rru]_{\Phi}&&
}
\end{equation}
where $\mathsf{inc}$ means the canonical inclusion.

We shall show that the obtained functor $\Phi$ satisfies $\mathsf{L}_\mathbb{V}\cong\Phi\circ Q_{LR}$.
Let $X\in\C$.
Associated to the definition of $Q_{LR}$, we have a $\U$-deflation $p_X:RX\to X$ and $\T$-inflation $\iota_{RX}:RX\to LRX$ in $\C$, as depicted in the  diagram below:
\begin{equation}\label{diag:associated_conflations}
\xymatrix@R=16pt{
V_X\ar[r]&RX\ar[r]^{p_X}\ar[d]^{\iota_{RX}}&X\\
&LRX\ar[d]&\\
&S^{RX}&
}
\end{equation}
where the row and the column are conflations and $V_X\in\V$ and $S^{RX}\in\S$.
Since $\mathsf{L}_\mathbb{V}$ sends $\mathbb{V}$ to a class of isomorphisms, we have isomorphisms in $\C[\mathbb{V}^{-1}]$:
$$\mathsf{L}_\mathbb{V}(X)\xleftarrow{\mathsf{L}_\mathbb{V}(p_X)}\mathsf{L}_\mathbb{V}(RX)\xto{\mathsf{L}_\mathbb{V}(\iota_{RX})}\mathsf{L}_\mathbb{V}(LRX)$$
Since $LRX\in\Z$, the diagram (\ref{diag:main_prop}) shows a natural isomorphism $\mathsf{L}_\mathbb{V}(LRX)\cong \Phi\circ\pi(LRX)$.
By the assumption, we notice $\pi(LRX)=Q_{LR}(X)$ in $\Z/\I$ and obtain a desired natural isomorphism $\mathsf{L}_\mathbb{V}(X)\cong \Phi\circ Q_{LR}(X)$ in $X$.
By a similar argument, we have an isomorphism $\mathsf{L}_\mathbb{V}\cong\Phi\circ Q_{RL}$.

It remains to show the uniqueness of $\Phi$.
To this end, we assume that there exists a functor $\Phi'$ with $\mathsf{L}_\mathbb{V}\cong \Phi'\circ Q_{RL}$.
Since $Q_{RL}$ is an identity functor on $\Z/\I$, $\pi $ equalize both $\Phi$ and $\Phi'$, precisely, $\Phi'\circ\pi\cong\Phi\circ\pi\cong\mathsf{L}_\mathbb{V}|_\Z$.
However, by the universality of $\pi$, such functors are uniquely determined up to isomorphism.
Hence $\Phi\cong\Phi'$.
\end{proof}

We deduce the following result from Proposition \ref{prop:key}.

\begin{lemma}\label{lem:Q_sends_W_to_ISO}
If there exists a functorial isomorphism $\eta:Q_{LR}\xto{\sim} Q_{RL}$,
the functors $Q_{LR}\cong Q_{RL}$ send $\mathbb{V}$ to a class of isomorphisms in $\C/\I$.
Moreover, we have restricted functors $Q_{LR}\cong Q_{RL}:\C\to\Z/\I$.
This situation can be depicted below:
$$
\xymatrix@C=36pt@R=12pt{
&{\T/\I}\ar@/^20pt/[ld]|{R}\ar@{^(->}[rd]&&\\
{\Z/\I}\ar@{^(->}[ru]\ar@{^(->}[rd]&&{\C/\I}
\ar@/^24pt/[ld]|{R}
\ar@/_24pt/[lu]|{L}&\C .\ar[l]_\pi\\
&{\U/\I}\ar@{^(->}[ru]\ar@/_20pt/[lu]|{L}&&
}$$
\end{lemma}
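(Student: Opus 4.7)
The plan is to prove the two assertions of the lemma sequentially: first that $Q_{LR}$ (and, by transport along $\eta$, $Q_{RL}$) inverts every morphism in $\mathbb{V}$, and second that the essential image of either functor lies inside $\Z/\I\subseteq\C/\I$.

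The starting observation is a general fact about (co)reflections. Since $R$ is right adjoint to the fully faithful inclusion $\U/\I\hookrightarrow\C/\I$, its unit is invertible, and the triangle identity then forces $R$ to send its own counit to an isomorphism. The counit at $X$ is precisely the class of the $\U$-deflation $p_X:U_X\to X$ from the chosen resolution, so $R$ sends every element of $\udef$ to an isomorphism; dually, $L$ inverts every element of $\tinf$.

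Now factor $f\in\mathbb{V}$ as $f=f_2\circ f_1$ with $f_1\in\tinf$ and $f_2\in\udef$. For the $\U$-deflation factor, $Q_{LR}(f_2)=LR(f_2)$ is already an isomorphism because $R(f_2)$ is. For the $\T$-inflation factor, a direct verification on $LR(f_1)$ is less transparent, but the naturality square of $\eta$ relates $Q_{LR}(f_1)$ to $Q_{RL}(f_1)=RL(f_1)$, and the latter is an isomorphism because $L(f_1)$ is. Composing yields $Q_{LR}(f)$ an isomorphism, and transporting along $\eta$ once more gives the same conclusion for $Q_{RL}(f)$.

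For the factorization through $\Z/\I$, note that $Q_{LR}$ factors through $\T/\I$ by construction, while via $\eta$ its essential image agrees with that of $Q_{RL}$, which factors through $\U/\I$. Hence the common essential image lies in $\T/\I\cap\U/\I$, and it remains to identify this intersection with $\Z/\I$ inside $\C/\I$. One inclusion is tautological; for the other, the defining conflation $R(X)\to LR(X)\to S^{R(X)}$ has both outer terms in $\U$ thanks to $\S\subseteq\U$ from (Hov1), and the isomorphism $\eta_X$ lets one compare this $\T$-representative $LR(X)$ with the $\U$-representative $RL(X)=Q_{RL}(X)$ to extract a common representative inside $\Z=\T\cap\U$. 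The main obstacle I anticipate is ensuring this coherent choice of $\Z$-representative works using only (Hov1) and (Hov2), without invoking the extension-closure of $\U$ (and $\T$) that a full cotorsion-pair hypothesis (Hov3) would grant; pinning down a canonical $Z$-valued lift of $Q_{LR}$, natural in $X$, is what the assumption on $\eta$ is really there to enable.
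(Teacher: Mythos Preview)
Your approach and the paper's are the same. The paper's proof is two sentences: first, $Q_{LR}(\udef)$ and $Q_{RL}(\tinf)$ consist of isomorphisms ``by definition'' (your adjunction argument simply unpacks this), so via $\eta$ both functors invert all of $\mathbb{V}$; second, since $\Im Q_{LR}\subseteq\T/\I$ while $\Im Q_{RL}\subseteq\U/\I$, the natural isomorphism $\eta$ forces the common image into $\Z/\I$.

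The obstacle you flag in your last paragraph is not genuine, but you are reaching for the wrong tool to dissolve it. The conflation $RX\to LRX\to S^{RX}$ together with extension-closure of $\U$ is the mechanism of a \emph{different} lemma in the paper (Lemma~\ref{lem:contained_in_Z}), which carries extension-closure as an explicit extra hypothesis; that route is deliberately unavailable here. What the paper's one-line argument actually uses is the identification $(\T/\I)\cap(\U/\I)=\Z/\I$ inside $\C/\I$, and this needs only the standing assumptions that $\T$ and $\U$ are closed under direct summands and that $\I\subseteq\T\cap\U$: given $T\in\T$, $U\in\U$ with $T\cong U$ in $\C/\I$, choose lifts $f\colon T\to U$, $g\colon U\to T$ and a factorization $gf-1_T=ba$ through some $I\in\I$; then $\binom{f}{a}\colon T\to U\oplus I$ is split by $(g\ -b)$, so $T$ is a direct summand of $U\oplus I\in\U$, hence $T\in\U$ and therefore $T\in\Z$. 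Replace your conflation detour with this observation and your write-up becomes the paper's proof.
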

\begin{proof}
By definition, $Q_{LR}(\udef)$ and $Q_{RL}(\tinf)$ form classes of isomorphisms.
Thus the first assertion holds.
Since $\Im Q_{LR}\subseteq \T/\I$ and $\Im Q_{RL}\subseteq \U/\I$, 
the existence of $\eta$ forces $\Im Q_{RL}\in\Z/\I$.
\end{proof}

A commutativity of $R$ and $L$ is a key property to prove many assertions in this article.
So we introduce the following terminology.

\begin{definition}
Let $\P$ be a pair of a left cotorsion pair and a right cotorsion pair satisfying \textnormal{(Hov1)} and \textnormal{(Hov2)}.
If there exists a functorial isomorphism $\eta:Q_{LR}\xto{\sim}Q_{RL}$,
then the pair $\P$ is called a \textit{generalized Hovey twin cotorsion pair} (\textit{gHTCP}).
Morover, the associated localization $\mathsf{L}_\mathbb{V}:\C\to\C[\mathbb{V}^{-1}]$ is called the \textit{gHTCP localization} with respect to $\P$.
\end{definition}

The following is our main result.
\begin{theorem}\label{thm:RL=LR_implies_universality}
Let $\P$ be a pair satisfying \textnormal{(Hov1)} and \textnormal{(Hov2)}.
The following are equivalent:
\begin{itemize}
\item[\textnormal{(i)}] There exists a natural isomorphism $\eta:Q_{LR}\to Q_{RL}$;
\item[\textnormal{(ii)}] $R(\T/\I)\subseteq \Z/\I$ holds and the  functor $L:\U/\I\to\Z/\I$ sends $R\pi(\tinf)$ to isomorphisms;
\item[\textnormal{(iii)}] $L(\U/\I)\subseteq \Z/\I$ holds and the  functor $R:\T/\I\to\Z/\I$ sends $L\pi(\udef)$ to isomorphisms;
\item[\textnormal{(iv)}] Both $\Im Q_{LR}$ and $\Im Q_{RL}$ are contained in $\Z/\I$ and the functor $\Phi:\Z/\I\to\C[\mathbb{V}^{-1}]$ is an equivalence.
\end{itemize}
\end{theorem}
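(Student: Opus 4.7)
The plan is to close the cycle of implications (i) $\Rightarrow$ (ii) $\Rightarrow$ (iv) $\Rightarrow$ (i), together with the parallel chain (i) $\Rightarrow$ (iii) $\Rightarrow$ (iv) $\Rightarrow$ (i), so that all four conditions become equivalent. For (i) $\Rightarrow$ (ii), pure naturality suffices: each $\iota_X\in\tinf$ realises the unit of the adjunction $L\dashv\mathsf{inc}_\T$ at $X$, so by the triangular identity $L\pi(\iota_X)$ is invertible in $\T/\I$, and hence $Q_{RL}(\iota_X)=RL\pi(\iota_X)$ is invertible in $\U/\I$. Naturality of $\eta:Q_{LR}\xto{\sim}Q_{RL}$ then forces $Q_{LR}(\iota_X)=LR\pi(\iota_X)$ to be invertible as well, while Lemma \ref{lem:Q_sends_W_to_ISO} supplies the image assertion $\Im Q_{LR}\subseteq\Z/\I$; jointly these give (ii). The implication (i) $\Rightarrow$ (iii) is word-for-word dual.

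For (ii) $\Rightarrow$ (iv), I would first observe that $Q_{LR}:\C\to\C/\I$ factors through $\Z/\I$ by the image assertion of (ii) applied to $RX\in\U$, and that $Q_{LR}$ sends the whole class $\mathbb{V}$ to isomorphisms: indeed $R\pi$ always inverts $\udef$ by the triangular identity for $\mathsf{inc}_\U\dashv R$, while the inversion hypothesis of (ii) says exactly that $L$ inverts $R\pi(\tinf)$. The universal property of $\mathsf{L}_\mathbb{V}$ then produces a unique functor $\Psi:\C[\mathbb{V}^{-1}]\to\Z/\I$ with $\Psi\circ\mathsf{L}_\mathbb{V}\cong Q_{LR}$, and I would check $\Psi$ is quasi-inverse to $\Phi$ as follows. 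On objects $Z\in\Z$, $\Psi\Phi(Z)\cong Q_{LR}(Z)=LRZ$, which is naturally isomorphic to $Z$ via $RZ\cong Z$ (from $Z\in\U$) and $LZ\cong Z$ (from $Z\in\T$). On the other side, for $X\in\C$, $\Phi\Psi\mathsf{L}_\mathbb{V}(X)\cong\mathsf{L}_\mathbb{V}(LRX)$, which is naturally isomorphic to $\mathsf{L}_\mathbb{V}(X)$ by inverting $p_X\in\udef$ and $\iota_{RX}\in\tinf$; uniqueness in the universal property then upgrades this to $\Phi\Psi\cong\mathrm{id}$. The implication (iii) $\Rightarrow$ (iv) is obtained dually, using $Q_{RL}$ in place of $Q_{LR}$.

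For (iv) $\Rightarrow$ (i), once $\Phi$ is an equivalence the hypothesis of Proposition \ref{prop:key} is available (its image conditions are implicit in the very definability of $\Phi$ via the commuting diagrams asserted there), so both $\mathsf{L}_\mathbb{V}\cong\Phi Q_{LR}$ and $\mathsf{L}_\mathbb{V}\cong\Phi Q_{RL}$ hold; composing with a quasi-inverse $\Psi$ of $\Phi$ yields $Q_{LR}\cong\Psi\mathsf{L}_\mathbb{V}\cong Q_{RL}$, which is (i). The main obstacle I anticipate lies in the asymmetry between (ii) and the symmetric image hypothesis of Proposition \ref{prop:key}: condition (ii) only directly supplies $\Im Q_{LR}\subseteq\Z/\I$, not $\Im Q_{RL}\subseteq\Z/\I$, so during (ii) $\Rightarrow$ (iv) the quasi-inverse of $\Phi$ must be built using $Q_{LR}$ alone and no intermediate step may covertly presuppose the missing containment for $Q_{RL}$; that containment is only recovered afterwards, via the chain (iv) $\Rightarrow$ (iii) which closes the system.
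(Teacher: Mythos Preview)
Your proposal is correct and closes the equivalences, but the route differs from the paper's. The paper arranges the implications as (i)$\Leftrightarrow$(ii), (i)$\Leftrightarrow$(iii), and (i)$\Leftrightarrow$(iv) separately: for (ii)$\Rightarrow$(i) it explicitly constructs the component $\eta_X:LRX\to RLX$ from the diagram of approximation morphisms (displaying $p_X$, $p_{LX}$, $\iota_{RX}$, and $R\iota_X$ together) and checks directly from hypothesis (ii) that $L\pi(\eta_X)$ is invertible; for (i)$\Rightarrow$(iv) it verifies by hand that $Q_{LR}$ enjoys the universal property of $\mathsf{L}_\mathbb{V}$, defining the comparison functor $F'$ on $\Z/\I$ and checking well-definedness by the same splitting argument as in Example~\ref{ex:univ_of_stable_cat}. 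Your approach instead short-circuits (ii)$\Rightarrow$(iv) by producing a quasi-inverse $\Psi$ to $\Phi$ via the universal property of $\mathsf{L}_\mathbb{V}$ applied to $Q_{LR}$ (this uses only the $Q_{LR}$-half of Proposition~\ref{prop:key}, which, as you correctly observe, needs only $\Im Q_{LR}\subseteq\Z/\I$), and then recovers (i) from (iv) by composing the two commuting triangles of Proposition~\ref{prop:key} with $\Psi$. Your cycle is slightly more economical with explicit constructions and leans harder on universal properties; the paper's cycle has the advantage of producing the isomorphism $\eta$ concretely from (ii) rather than abstractly through the localization. Both treatments handle the image hypotheses packaged into (iv) in the same way---by reading (iv) as referring to the $\Phi$ of Proposition~\ref{prop:key}---so there is no discrepancy on that point, and your final paragraph correctly identifies where care is needed.
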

\begin{proof}
(i) $\Rightarrow$ (ii), (iii): These implications directly follow from Lemma \ref{lem:Q_sends_W_to_ISO}.

(ii) $\Rightarrow$ (i): For $X\in\C$, we consider a $\T$-inflation $X\xto{\iota_X} LX$, $\U$-deflations $p_X:RX\to X$ and $p_{LX}:RLX\to LX$.
Then we get a morphism $R\iota_X:RX\to RLX$ such that $\iota_X\circ p_X =p_{LX}\circ R\iota_X$.
In addition, taking $\T$-inflation starting from $RX$ yields the following commutative diagram in $\C$:
\begin{equation}\label{diag:eta}
\xymatrix@R=16pt{
X\ar[r]^{\iota_X}&LX\\
RX\ar[d]_{\iota_{RX}}\ar[r]^{R\iota_X}\ar[u]^{p_{X}}&RLX\ar[u]_{p_{LX}}\\
LRX&
}
\end{equation}
Since $RLX\in\Z$ and $\iota_{RX}$ is a left $\T$-approximation, we have a morphism $\eta_X:LRX\to RLX$ with $R\iota_X =\eta_X\circ\iota_{RX}$.
By the assumption, we have isomorphisms $L\pi(\iota_{RX})$ and $L\pi(R\iota_{X})$.
Hence we conclude that $L\pi (\eta_X)$ is also an isomorphism in $\Z/\I$.
Due to $Q_{LR}(X)\cong L\pi (LRX)$ and $Q_{RL}(X)\cong L\pi (RLX)$, the morphism $\eta_X$ gives rise to a desired natural isomorphism.

(iii) $\Rightarrow$ (i): It can be checked by the dual argument.

(iv) $\Rightarrow$ (ii), (iii): Since $\mathsf{L}_\mathbb{V}$ sends $\mathbb{V}$ to isomorphisms, the assertions (ii) and (iii) hold.

(i) $\Rightarrow$ (iv): We shall show that the functor $Q_{LR}:\C\to\Z/\I$ has the same universality as that of $\mathsf{L}_\mathbb{V}$.
Due to Lemma \ref{lem:Q_sends_W_to_ISO}, it follows that $Q_{LR}$ sends $\mathbb{V}$ to isomorphisms.
To check the universality of $Q_{LR}$, 
let $F:\C\to\D$ be a functor which sends $\mathbb{V}$ to a class of isomorphisms in a cateogry $\D$.
We shall construct a functor $F':\Z/\I\to\D$ such that $F\cong F'\circ Q_{LR}$ as follows:
For an object $X\in\mathsf{Ob}(\Z/\I)\subseteq\mathsf{Ob}(\C)$,
we set $F'(X):=F(X)$;
For a morphism $f:X\to Y$ in $\Z$, we set $F'(Q_{LR}(f)):=F(f)$.
This assignment can give rise to a desired functor $F'$.
To show the well-definedness of this assignment:
Let $f, g:X\to Y$ be morphisms in $\Z$ with $Q_{LR}(f)=Q_{LR}(g)$ in $\Z/\I$, namely, $f-g$ is factored as $X\xto{a}I\xto{b}Y$ with $I\in\I$.
Then we get $f=\begin{pmatrix}
g&b
\end{pmatrix}
\begin{pmatrix}
1\\
a
\end{pmatrix}
$.
Since the remaining argument to check the well-definedness is completely same as the proof of Example \ref{ex:univ_of_stable_cat}, we skip the details.

To verify the commutativity up to isomorphism, namely, the existence of an isomorphism $F\cong F'\circ Q_{LR}$,
we let $f:X\to Y$ be a morphism in $\C$ and consider the following commutative digram in $\C$:
$$
\xymatrix@R=16pt{
LRX\ar[d]^{LRf}&RX\ar[r]^{p_X}\ar[l]_{\iota_{RX}}\ar[d]^{Rf}&X\ar[d]^f\\
LRY&RY\ar[r]_{p_Y}\ar[l]^{\iota_{RY}}&Y
}
$$
where $R*\in\U$, $LR*\in\Z$, $p_*\in\udef$ and $\iota_*\in\tinf$ for $*=X,Y$.
Note that, due to an isomorphism $\eta:Q_{LR}\xto{\sim}Q_{RL}$, we get isomorphisms $Q_{LR}(\iota_*)$ and $Q_{LR}(p_*)$ for $*=X,Y$.
Since $LRf$ is a morphism in $\Z/\I$, by definition, we have an equality $F'\circ Q_{LR}(LRf)=F(LRf)$.
The functors $F$ and $Q_{LR}$ send the above horizontal arrows to  isomorphisms in $\D$ and $\Z/\I$, respectively.
Hence we have a desired natural isomorphism
\[
F(X)\xto{F'\circ Q_{LR}(p_X)F'\circ Q_{LR}(\iota_{RX})^{-1}F(\iota_{RX})F(p_X)^{-1}}F'\circ Q_{LR}(X)
\]
in $X\in\C$.

Finally, to show the uniqueness of a desired functor $F'$, we assume that there exists a functor $F''$ with $F\cong F''\circ Q_{LR}$.
Then, the functors restricts on $\Z$ to have isomorphisms $F|_\Z\cong F'\circ Q_{LR}|_\Z\cong F''\circ Q_{LR}|_\Z$.
Note that $F'\circ Q_{LR}|_\Z\cong F'\pi$ and $F''\circ Q_{LR}|_\Z\cong F''\pi$.
Since the ideal quotient $\Z\to \Z/\I$ is a localization with respect to $\mathbb{V}_\Z$ (which is a subclass of $\mathbb{V}$),
by the universality, we have $F'\cong F''$.
\end{proof}

Although the gHTCP localization $\mathsf{L}_\mathbb{V}$ does not admit a calculus of left and right fractions in general, the morphisms in $\C[\mathbb{V}^{-1}]$ admit the following nice descriptions.

\begin{corollary}
Suppose that a pair $\P$ forms a gHTCP.
For any morphism $\overline{\alpha}:\mathsf{L}_\mathbb{V}(X)\to\mathsf{L}_\mathbb{V}(Y)$ in $\C[\mathbb{V}^{-1}]$, there exist morphisms $s:RX\to X$, $t:Y\to LY$ in $\mathbb{V}$ and $\alpha:RX\to LY$ in $\C$ such that $\overline{\alpha}=\mathsf{L}_\mathbb{V}(t)^{-1}\circ\mathsf{L}_\mathbb{V}(\alpha)\circ\mathsf{L}_\mathbb{V}(s)^{-1}$.
\end{corollary}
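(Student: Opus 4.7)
The plan is to exploit the equivalence $\Phi:\Z/\I\xto{\sim}\C[\mathbb{V}^{-1}]$ of Theorem \ref{thm:RL=LR_implies_universality} together with the realization of $\Z/\I$ as an additive quotient of $\Z$ by $\I$ via Example \ref{ex:univ_of_stable_cat}. The strategy is: conjugate $\overline{\alpha}$ by structural isomorphisms to bring it between two objects of $\Z$, lift the resulting morphism to $\C$ via the equivalence, and then transport back to $RX\to LY$.

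For the given objects $X,Y\in\C$, I would fix the structural approximation morphisms
\[
p_X:RX\to X,\quad \iota_{RX}:RX\to LRX,\quad \iota_Y:Y\to LY,\quad p_{LY}:RLY\to LY,
\]
where the gHTCP hypothesis (via Lemma \ref{lem:Q_sends_W_to_ISO}) guarantees $LRX,RLY\in\Z$. Exactly as in the proof of Proposition \ref{prop:key}, each of these four morphisms is sent to an isomorphism by $\mathsf{L}_\mathbb{V}$, so conjugating $\overline{\alpha}$ produces a morphism $\overline{g}:\mathsf{L}_\mathbb{V}(LRX)\to\mathsf{L}_\mathbb{V}(RLY)$ in $\C[\mathbb{V}^{-1}]$. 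Since $\Phi$ is fully faithful and its source $\Z/\I$ is the ideal quotient $\C(LRX,RLY)$ modulo morphisms factoring through $\I$, the morphism $\overline{g}$ is represented by an honest morphism $g:LRX\to RLY$ in $\C$.

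I would then set $\alpha:=p_{LY}\circ g\circ\iota_{RX}:RX\to LY$, $s:=p_X$ and $t:=\iota_Y$. A direct calculation, substituting the chosen $g$ into $\mathsf{L}_\mathbb{V}(\alpha)=\mathsf{L}_\mathbb{V}(p_{LY})\circ\mathsf{L}_\mathbb{V}(g)\circ\mathsf{L}_\mathbb{V}(\iota_{RX})$ and cancelling the structural isomorphisms via the definition of $\overline{g}$, yields $\mathsf{L}_\mathbb{V}(\alpha)=\mathsf{L}_\mathbb{V}(\iota_Y)\circ\overline{\alpha}\circ\mathsf{L}_\mathbb{V}(p_X)$. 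Rearranging gives the desired decomposition $\overline{\alpha}=\mathsf{L}_\mathbb{V}(t)^{-1}\circ\mathsf{L}_\mathbb{V}(\alpha)\circ\mathsf{L}_\mathbb{V}(s)^{-1}$.

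The main conceptual step is the passage through the equivalence $\Phi$; the most delicate point is the formal membership $s,t\in\mathbb{V}=\udef\circ\tinf$, since a priori $p_X$ is only a $\U$-deflation and $\iota_Y$ only a $\T$-inflation. One handles this by observing that identity morphisms on objects of $\U$ (respectively $\T$) belong to $\udef$ (respectively $\tinf$) via a split conflation, so that trivial factorizations $p_X=p_X\circ\operatorname{id}_{RX}$ and $\iota_Y=\operatorname{id}_{LY}\circ\iota_Y$ realize them as elements of $\udef\circ\tinf$ after precomposing/postcomposing with a suitable structural identity—or, equivalently, one may simply invoke the fact (already used in Proposition \ref{prop:key}) that $\mathsf{L}_\mathbb{V}$ inverts these morphisms, which is all that is really needed in the argument.
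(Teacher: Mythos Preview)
Your proof is correct and follows essentially the same route as the paper's: both use the equivalence $\Phi$ to lift $\overline{\alpha}$ to a morphism $g$ (the paper calls it $f$) between $Q_{LR}(X)=LRX$ and $Q_{RL}(Y)=RLY$ in $\Z$, and then set $\alpha:=p_{LY}\circ g\circ\iota_{RX}$ with $s=p_X$, $t=\iota_Y$. The paper does not verify the literal membership $s,t\in\mathbb{V}$ either; your final remark---that what is actually needed is that $\mathsf{L}_\mathbb{V}$ inverts $p_X$ and $\iota_Y$, which follows from $\mathsf{L}_\mathbb{V}\cong\Phi\circ Q_{LR}\cong\Phi\circ Q_{RL}$---is the correct way to read the statement, whereas your attempted factorizations via identities do not work as written since $RX\in\U$ need not lie in $\T$ and $LY\in\T$ need not lie in $\U$.
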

\begin{proof}
Due to the equivalence $\Phi:\Z/\I\xto{\sim}\C[\mathbb{V}^{-1}]$,
we have a morphism $\overline{f}:Q_{LR}(X)\to Q_{LR}(Y)$ in $\Z/\I$ such that $\Phi(\overline{f})=\overline{\alpha}$.
Regarding $Q_{LR}(X), Q_{RL}(Y)\in\Z$, we get a morphism $f:Q_{LR}(X)\to Q_{RL}(Y)$ with $\pi(f)=\overline{f}$.
For $X\in\C$, there exists a diagram $X\xleftarrow{p_X}RX\xto{\iota_{RX}}Q_{LR}X$ with $p_X\in\udef$ and $\iota_{RX}\in\tinf$.
Similarly, for $Y\in\C$, we have a diagram $Y\xto{\iota_Y}LY\xleftarrow{p_{LY}}Q_{RL}Y$ with $p_{LY}\in\udef$ and $\iota_{Y}\in\tinf$.
Our situation can be depicted as:
$$
\xymatrix@R=16pt{
V_X\ar[d]&&\\
RX\ar[d]^{p_X}\ar[r]^{\iota_{RX}}&Q_{LR}X\ar[dd]^f\ar[r]&S^{RX}\\
X&&Y\ar[d]^{\iota_Y}\\
V_{LX}\ar[r]&Q_{RL}Y\ar[r]^{p_{LY}}&LY\ar[d]\\
&&S^Y
}
$$
Setting $\alpha:=p_{LY}\circ f\circ \iota_{RX}$,
we have obtained $\overline{\alpha}=\mathsf{L}_\mathbb{V}(\iota_Y)^{-1}\circ\mathsf{L}_\mathbb{V}(\alpha)\circ\mathsf{L}_\mathbb{V}(p_X)^{-1}$.
\end{proof}

\subsection{Comparison between gHTCP and HTCP}\label{ssec:comparison}
Throughout this subsection, we always suppose that the extriangulated category $\C$ satisfies the following condition (WIC):
\begin{condition}[WIC]
For a given extriangualted category $\C$, we consider the following conditions.
\begin{enumerate}
\item For a composed morphism $g\circ f$ in $\C$, if $g\circ f$ is an inflation, then so is $f$.
\item For a composed morphism $g\circ f$ in $\C$, if $g\circ f$ is an deflation, then so is $g$.
\end{enumerate}
\end{condition}

To bridge the gap between gHTCP and HTCP, we consider some additional conditions on the gHTCP.
Keeping in mind Nakaoka-Palu's correspondence theorem between HTCP's and admissible model structures on $\C$ \cite[Section 5]{NP19},
for a pair $\P$ of a left cotorsion pair and a right one,
we consider the following classes of morphisms:
\begin{itemize}
\item $\wfib:=$\,the class of deflations $f$ with $\cocone(f)\in\V$;
\item $\wcof:=$\,the class of inflations $f$ with $\cone(f)\in\S$;
\item $\mathbb{W}:=\wfib\circ\wcof$.
\end{itemize}
where $\wfib\circ\wcof$ denotes the class of morphisms $f$ which has a factorization $f=f_2\circ f_1$ with $f_1\in\wcof$ and $f_2\in\wfib$.
Obviously, we have $\udef\subseteq \wfib$, $\tinf\subseteq \wcof$ and $\mathbb{V}\subseteq\mathbb{W}$.
Recall the definition of HTCP localization.

\begin{theorem}\cite[Cor. 5.25]{NP19}\label{thm:Nakaoka-Palu's_HTCP}
Let $\P$ be an HTCP.
Then, the Gabriel-Zisman localization $\mathsf{L}_\mathbb{W}:\C\to\C[\mathbb{W}^{-1}]$ induces an equivalence $\Psi:\Z/\I\to\C[\mathbb{W}^{-1}]$, unique up to isomorphism, which is depicted as follows:
$$
\xymatrix{
\Z\ar[d]_\pi\ar@{^(->}[r]^{\mathsf{inc}}&\C\ar[r]^{\mathsf{L}_\mathbb{W}\ \ }&\C[\mathbb{W}^{-1}]\\
\Z/\I\ar@{..>}[rur]_\Psi&&
}
$$
We call this localization the \textnormal{HTCP localization} of $\C$ with respect to $\P$.
\end{theorem}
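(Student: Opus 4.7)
The plan is to derive Theorem \ref{thm:Nakaoka-Palu's_HTCP} as a direct consequence of the author's own Theorem \ref{thm:RL=LR_implies_universality}, by establishing two intermediate facts: (a) every HTCP is automatically a gHTCP, and (b) the Gabriel-Zisman localizations $\C[\mathbb{V}^{-1}]$ and $\C[\mathbb{W}^{-1}]$ coincide under the HTCP hypothesis. Granting these, the desired equivalence $\Psi$ is obtained as the composite $\Z/\I \xto{\Phi} \C[\mathbb{V}^{-1}] \simeq \C[\mathbb{W}^{-1}]$, and its uniqueness up to isomorphism follows from the universality of $\pi \colon \Z \to \Z/\I$ established in Example \ref{ex:univ_of_stable_cat}.

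For fact (a), I would verify criterion (ii) of Theorem \ref{thm:RL=LR_implies_universality}, namely that $L \colon \U/\I \to \Z/\I$ sends $R\pi(\tinf)$ to isomorphisms. Given $X \in \C$ with its two associated conflations $V_X \to RX \xto{p_X} X$ and $X \xto{\iota_X} LX \to S^X$, the genuine cotorsion pair orthogonality $\mathbb{E}(\S,\T) = \mathbb{E}(\U,\V) = 0$ coming from (Hov3), together with the exchange condition (Hov4), allows me to assemble a $3 \times 3$ diagram of conflations whose middle corner object $M$ lies in $\Z$ and simultaneously represents $LRX$ and $RLX$ in $\Z/\I$. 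The comparison maps provided by the lifting property of each cotorsion pair then produce the natural isomorphism $\eta_X \colon Q_{LR}(X) \xto{\sim} Q_{RL}(X)$ required.

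For fact (b), the inclusion $\mathbb{V} \subseteq \mathbb{W}$ gives a canonical comparison $\C[\mathbb{V}^{-1}] \to \C[\mathbb{W}^{-1}]$; conversely, it suffices to check that $\mathsf{L}_{\mathbb{V}}$ inverts every morphism in $\wfib$, the case of $\wcof$ being formally dual. Given $f \colon X \to Y$ with conflation $V \to X \xto{f} Y$ and $V \in \V$, I pass through the equivalence $\Phi$ and instead check that $Q_{LR}(f)$ is an isomorphism in $\Z/\I$. The orthogonality $\mathbb{E}(U_X, V) = 0$ furnished by the cotorsion pair $(\U, \V)$ lifts the right $\U$-approximation $p_Y \colon U_Y \to Y$ through $f$, yielding a comparison $U_X \to U_Y$; a diagram chase using (WIC) together with (Hov3) shows that its cone lies in $\I$, so that applying $L$ produces the required isomorphism in $\Z/\I$.

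The principal obstacle is the diagram chase in step (b): one must upgrade a merely formal factorization between $\U$-approximations into an honest conflation whose cone sits in $\I = \U \cap \V$, rather than just in $\S \cap \T$. This is precisely where the full HTCP package—the two cotorsion pairs operating in tandem with (Hov4) and (WIC)—is indispensable, and is also the point at which the proof cannot be carried out for a general gHTCP. Once this is settled, everything else is bookkeeping via the universal properties already invoked in the proof of Theorem \ref{thm:RL=LR_implies_universality}.
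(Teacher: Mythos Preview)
The paper does not give its own proof of Theorem~\ref{thm:Nakaoka-Palu's_HTCP}: the statement is quoted verbatim from \cite[Cor.~5.25]{NP19} and treated as a black box. In fact the paper uses this cited result in the \emph{opposite} direction from your proposal---it invokes the equivalence $\Psi$ from \cite{NP19} to deduce Theorem~\ref{thm:HTCP_is_gHTCP} (that every HTCP is a gHTCP), whereas you propose to first establish that an HTCP is a gHTCP directly and then recover Nakaoka--Palu's equivalence via Theorem~\ref{thm:RL=LR_implies_universality}.

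Your route is logically coherent and would yield an independent proof, but step~(a) is the crux and the paper does not carry it out: its only hint is the Remark after Theorem~\ref{thm:HTCP_is_gHTCP}, which defers to the model structure from \cite{NP19, Hov99} for the isomorphism $\eta$. Your sketch of the $3\times 3$ diagram using (Hov3) and (Hov4) is plausible but not yet a proof; in particular you would need to explain precisely how the exchange condition $\cone(\V,\S)=\cocone(\V,\S)$ enters, since the orthogonality $\mathbb{E}(\S,\T)=\mathbb{E}(\U,\V)=0$ alone is not enough (that only gives Lemma~\ref{lem:contained_in_Z}). For step~(b) you are essentially rediscovering Lemma~\ref{lem:misc1} and Proposition~\ref{prop:Loc_sends_W'_to_iso}, though note that the actual mechanism there is cleaner than what you describe: one shows directly that the conflation $V'\to U_X\to Y$ obtained by composing already resolves $Y$ by $(\U,\V)$, so $R\pi(f)$ is an isomorphism in $\U/\I$ without needing to analyze a cone in $\I$ or invoke (WIC).
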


The aim of this subsection is to show that gHTCP is a generalization of HTCP in the following sense:

\begin{theorem}\label{thm:HTCP_is_gHTCP}
Let $\P=((\S,\T),(\U,\V))$ be an HTCP.
Then $\P$ is also a gHTCP.
Moreover, the HTCP localization $\C[\mathbb{W}^{-1}]$ is equivalent to the gHTCP localization $\C[\mathbb{V}^{-1}]$.
\end{theorem}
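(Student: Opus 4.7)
The strategy is to verify that an HTCP $\P$ satisfies the gHTCP axioms, chiefly by producing a natural isomorphism $\eta:Q_{LR}\xto{\sim}Q_{RL}$. Once this is in hand, Theorem~\ref{thm:RL=LR_implies_universality}~(i)$\Rightarrow$(iv) supplies an equivalence $\Phi:\Z/\I\xto{\sim}\C[\mathbb{V}^{-1}]$, and comparison with Nakaoka-Palu's $\Psi:\Z/\I\xto{\sim}\C[\mathbb{W}^{-1}]$ from Theorem~\ref{thm:Nakaoka-Palu's_HTCP} delivers the desired equivalence of the two localizations.

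I would first verify the image condition required by Proposition~\ref{prop:key}. For any $X\in\C$, the $\T$-inflation $\iota_{RX}:RX\to LRX$ sits inside a conflation $RX\to LRX\to S^{RX}$; by (Hov1), $S^{RX}\in\S\subseteq\U$, while under (Hov3) the subcategory $\U$ is extension-closed as half of a genuine cotorsion pair, so $LRX\in\U$. Combined with $LRX\in\T$ by construction, this forces $LRX\in\Z$, and the dual argument places $RLX\in\Z$.

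The main step---constructing $\eta$---is most cleanly handled by treating Theorem~\ref{thm:Nakaoka-Palu's_HTCP} as a black box. Set $H:=\Psi^{-1}\circ\mathsf{L}_\mathbb{W}:\C\to\Z/\I$. Because $0\in\S\cap\V$ under (Hov3), identity morphisms belong to both $\wfib$ and $\wcof$, hence $\udef\subseteq\wfib\subseteq\mathbb{W}$ and $\tinf\subseteq\wcof\subseteq\mathbb{W}$, whence $H$ inverts every $\U$-deflation and every $\T$-inflation. The canonical zig-zag $X\xleftarrow{p_X}RX\xto{\iota_{RX}}LRX$ therefore becomes a chain of isomorphisms $H(X)\cong H(RX)\cong H(LRX)$ in $\Z/\I$. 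Furthermore, the commutative triangle of Theorem~\ref{thm:Nakaoka-Palu's_HTCP} forces $H|_\Z\cong\pi|_\Z$, so $H(LRX)=\pi(LRX)=Q_{LR}(X)$. Assembling these objectwise identifications yields $H\cong Q_{LR}$; a symmetric argument gives $H\cong Q_{RL}$, and composing produces the desired $\eta$.

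With $\P$ confirmed as a gHTCP, Theorem~\ref{thm:RL=LR_implies_universality} provides $\Phi:\Z/\I\xto{\sim}\C[\mathbb{V}^{-1}]$. The inclusion $\mathbb{V}\subseteq\mathbb{W}$, together with the universality of $\mathsf{L}_\mathbb{V}$, produces a canonical comparison functor $G:\C[\mathbb{V}^{-1}]\to\C[\mathbb{W}^{-1}]$ with $G\circ\mathsf{L}_\mathbb{V}\cong\mathsf{L}_\mathbb{W}$; tracing $\Phi$ and $\Psi$ on objects of $\Z$ gives $G\circ\Phi\cong\Psi$, forcing $G$ to be an equivalence. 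I expect the most delicate point to be the naturality of $H\cong Q_{LR}$: the objectwise identifications are immediate, but assembling them into a natural transformation requires that the chosen approximations $p_X$ and $\iota_{RX}$ be organized coherently, which ultimately reduces to the unit/counit formalism for the adjunctions $\mathsf{inc}\dashv R$ and $L\dashv\mathsf{inc}$.
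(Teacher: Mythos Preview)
Your proposal is correct and follows essentially the same strategy as the paper: both arguments treat Nakaoka--Palu's equivalence $\Psi:\Z/\I\xto{\sim}\C[\mathbb{W}^{-1}]$ as a black box and exploit the inclusion $\mathbb{V}\subseteq\mathbb{W}$ to compare $\Phi$ and $\Psi$ through the canonical functor $\C[\mathbb{V}^{-1}]\to\C[\mathbb{W}^{-1}]$. Your ordering---first producing $\eta$ explicitly via $Q_{LR}\cong\Psi^{-1}\circ\mathsf{L}_\mathbb{W}\cong Q_{RL}$ and then invoking (i)$\Rightarrow$(iv)---is in fact slightly cleaner than the paper's, which asserts that $\Phi$ is an equivalence before extracting $\eta$ via (iv)$\Rightarrow$(i).
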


To prove the theorem, we shall use the following easy lemmas.
\begin{lemma}\label{lem:misc1}
Let $(\U,\V)$ be a right cotorsion pair in an extriangulated category $\C$.
Assume that  $\V$ is extension-closed and $\mathbb{E}(\U,\V)=0$.
Then $R\pi:\C\to\U/\I$ sends $\wfib$ to a class of isomorphisms.
\end{lemma}
\begin{proof}
Take a morphism $f\in\wfib$ together with a conflation $V\xto{}X\xto{f}Y$ with $V\in\V$.
We resolve $X$ by $(\U,\V)$ to get a conflation $V_X\to U_X\xto{p_X} X$ with $V_X\in\V$ and $U_X\in\U$.
Thanks to the following commutative diagram and extension-closedness of $\V$,  we have that the composed morphism $f\circ p_X$ is in $\wfib$:
$$
\xymatrix@R=16pt{
V_X\ar[d]\ar@{=}[r]&V_X\ar[d]&\\
V'\ar[r]\ar[d]&U_X\ar[r]^{f\circ p_X}\ar[d]_{p_X}&Y\ar@{=}[d]\\
V\ar[r]&X\ar[r]^f&Y
}
$$
Since $\mathbb{E}(\U,\V)=0$, we get $f\circ p_X\in\udef$.
Hence we conclude $RX=U_X\cong RY$ in $\U/\I$.
\end{proof}

\begin{lemma}\label{lem:contained_in_Z}
Let $\P$ be a pair of left cotorsion pair and a right cotorsion pair satisfying \textnormal{(Hov1)} and \textnormal{(Hov2)}.
If both $\T$ and $\U$ are extension-closed, then the images of $Q_{LR}$ and $Q_{RL}$ are contained in $\Z/\I$.
\end{lemma}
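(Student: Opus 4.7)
The plan is to unwind the definitions of $Q_{LR}$ and $Q_{RL}$ and then use extension-closedness of $\T$ and $\U$ together with (Hov1) to place each output object in both $\T$ and $\U$.

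First I would fix $X \in \C$ and trace through $Q_{LR}(X) = LR\pi(X)$. By construction of the right adjoint $R$, we have $R\pi(X) = \pi(U_X)$ for some object $U_X \in \U$ appearing in a conflation $V_X \to U_X \to X$. Then $L\pi(U_X)$ is represented by the object $T^{U_X} \in \T$ coming from a $\T$-inflation $U_X \xto{\iota_{U_X}} T^{U_X} \to S^{U_X}$ with $S^{U_X} \in \S$. Membership $T^{U_X} \in \T$ is automatic; the point is to show $T^{U_X} \in \U$. Since $\S \subseteq \U$ by (Hov1), we have $S^{U_X} \in \U$; since $U_X \in \U$ as well, extension-closedness of $\U$ forces $T^{U_X} \in \U$, hence $T^{U_X} \in \T \cap \U = \Z$.

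For $Q_{RL}(X) = RL\pi(X)$, a symmetric analysis applies. Here $L\pi(X) = \pi(T^X)$ with $T^X$ arising from a conflation $X \to T^X \to S^X$, and then $R\pi(T^X) = \pi(U_{T^X})$ with $U_{T^X}$ arising from a conflation $V_{T^X} \to U_{T^X} \to T^X$. Automatically $U_{T^X} \in \U$; to see $U_{T^X} \in \T$, use $\V \subseteq \T$ from (Hov1), so $V_{T^X} \in \T$, and combined with $T^X \in \T$, extension-closedness of $\T$ yields $U_{T^X} \in \T$, so $U_{T^X} \in \Z$.

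There is no genuine obstacle here; the lemma is essentially a direct consequence of (Hov1) together with the hypothesized extension-closedness. The only thing to be careful about is that the image objects are defined only up to the choice of approximation conflation and modulo $\I$, but since the constructions above produce a representative that already lies in $\Z$, the image in $\C/\I$ lands in $\Z/\I$.
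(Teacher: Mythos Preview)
Your proof is correct and follows essentially the same argument as the paper's own proof: represent $Q_{LR}(X)$ via the conflation $U_X \to T^{U_X} \to S^{U_X}$, invoke (Hov1) to place $S^{U_X}$ in $\U$, and apply extension-closedness of $\U$; then dualize for $Q_{RL}$. The paper is slightly terser (it leaves the use of (Hov1) and the dual case implicit), but the content is identical.
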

\begin{proof}
Let $X\in\C$ and consider the following commutative diagram associated to the definition of $Q_{LR}$:
$$
\xymatrix@R=16pt{
V_X\ar[r]&RX\ar[d]^{\iota_{RX}}\ar[r]^{p_{X}}&X\\
&LRX\ar[d]&\\
&S^{RX}&
}
$$
which is same as (\ref{diag:associated_conflations}).
Since $\U$ is extension-closed, $RX, S^{RX}\in\U$ forces $LRX\in\Z$.
This says $\Im Q_{LR}\subseteq \Z/\I$.

The assertion for $Q_{RL}$ can be checked dually.
\end{proof}

The following proposition shows the latter statement in Theorem \ref{thm:HTCP_is_gHTCP}.

\begin{proposition}\label{prop:add}
Let $\P$ be an HTCP.
Then, there exists an equivalence $F:\C[\mathbb{V}^{-1}]\xto{\sim}\C[\mathbb{W}^{-1}]$.
\end{proposition}
\begin{proof}
Since $\mathbb{V}\subseteq\mathbb{W}$, there exists a natural functor $F:\C[\mathbb{V}^{-1}]\to\C[\mathbb{W}^{-1}]$ such that $\mathsf{L}_\mathbb{W}\cong F\circ\mathsf{L}_\mathbb{V}$.
Denote by $\widetilde{\mathbb{V}}$ the class of morphisms $f\in\Mor\C$ such that $\mathsf{L}_\mathbb{V}(f)$ is an isomorphism.
We also use the symbol $\widetilde{\mathbb{W}}$ in an obvious meaning.
A basic property of the model category shows $\widetilde{\mathbb{W}}=\mathbb{W}$ (see \cite[Remark 7.8]{NP19}).
By Lemma \ref{lem:misc1} and the dual, we conclude $\mathbb{W}\subseteq\widetilde{\mathbb{V}}=\widetilde{\mathbb{W}}$ which forces $F$ to be an equivalence.
\end{proof}
\begin{proof}[Proof of Theorem \ref{thm:HTCP_is_gHTCP}]
By Proposition \ref{prop:key} and Lemma \ref{lem:contained_in_Z}, there exists a functor $\Phi:\Z/\I\to\C[\mathbb{V}^{-1}]$.
We only have to show that $\Phi$ is an equivalence.
Thanks to Theorem \ref{thm:Nakaoka-Palu's_HTCP},
we have the following diagram commutative up to isomorphism:
$$
\xymatrix{
\Z\ar[d]_\pi\ar@{^(->}[r]^{\mathsf{inc}}&\C\ar[d]^{\mathsf{L}_\mathbb{V}}\ar[r]^{\mathsf{L}_\mathbb{W}\ \ }&\C[\mathbb{W}^{-1}]\\
\Z/\I\ar[r]^\Phi\ar@/_3.5pc/[rru]_\Psi&\C[\mathbb{V}^{-1}]\ar[ru]_F&
}
$$
The commutativity $\Psi\cong F\circ\Phi$ follows from the uniqueness of $\Psi$.
Since $F$ and $\Psi$ are equivalences, so is $\Phi$.
This finishes the proof.
\end{proof}

\begin{remark}
We remark that the existence of the above isomorphism $\eta$ follows from the model structure corresponding to the HTCP via the general theory of model category, e.g. \cite[Ch. 1.1]{DS95}, \cite[Thm. 1.2.10]{Hov99}.
\end{remark}

\subsection{Examples: recollement of abelian/triangulated categories}\label{ssec:example_of_gHTCP}
In this section, as applications of Theorem \ref{thm:RL=LR_implies_universality},
we interpret some important phenomena via gHTCP localizations.
\subsubsection{Additive quotient}
Let $\C$ be an extriangulated category and $\D$ its full subcategory which is closed under isomorphisms and direct summands.
Then, $\P:=((\D,\C),(\C,\D))$ obviously forms a gHTCP.
The following is straightforward.

\begin{corollary}
The following hold for the above $\P$.
\begin{itemize}
\item[(1)] $\I=\D$ and $\Z=\C$.
\item[(2)] The class $\udef$ consists of retractions with the cocones belonging to $\D$.
\item[(3)] The class $\tinf$ consisits of sections with the cones belonging to $\D$.
\item[(4)] The functors $L\pi, R\pi:\C/\D\to\C/\D$ are isomorphic to the identity functors.
\end{itemize}
In particular, we have an equivalence $\Phi:\C/\D\xto{\sim} \C[\mathbb{V}^{-1}]$.
\end{corollary}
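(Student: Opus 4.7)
The plan is to unwind all four statements from the definitions applied to $\P=((\D,\C),(\C,\D))$ and then appeal to Theorem \ref{thm:RL=LR_implies_universality}. First I would verify that the set-up is well-posed, i.e.\ that $(\D,\C)$ is a left cotorsion pair and $(\C,\D)$ is a right cotorsion pair, together with (Hov1) and (Hov2). For the right cotorsion pair, given any $X\in\C$, the trivially split sequence $0\to X\xto{1_X}X$ is a conflation providing a right $\C$-approximation $1_X$ whose cocone $0$ lies in $\D=\V$; a dual choice works for the left cotorsion pair. Condition (Hov1) reduces to $\D\subseteq\C$, and (Hov2) gives $\S\cap\T=\D=\U\cap\V$, both immediate. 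Item (1) is then by direct calculation: $\I=\S\cap\V=\D$ and $\Z=\T\cap\U=\C$.

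For (2), since $\U=\C$, any right $\U$-approximation $p\colon U\to X$ must allow $1_X$ to factor through $p$, so $p$ is a retraction; conversely every retraction is trivially a right $\C$-approximation and sits in a split conflation whose cocone is its kernel, which has to lie in $\V=\D$ for $p$ to belong to $\udef$. Item (3) follows by the dual argument applied to sections and $\S=\D$.

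For (4), note that $\U/\I=\C/\D=\T/\I$, so the inclusions $\U/\I\hookrightarrow\C/\I$ and $\T/\I\hookrightarrow\C/\I$ are the identity endofunctor of $\C/\D$; their respective adjoints $R$ and $L$ are therefore also the identity (up to natural isomorphism), yielding $Q_{LR}=LR\pi\cong\pi\cong RL\pi=Q_{RL}$ and thus showing that $\P$ is a gHTCP. Theorem \ref{thm:RL=LR_implies_universality} (i)$\Rightarrow$(iv) then produces the asserted equivalence $\Phi\colon\C/\D\xto{\sim}\C[\mathbb{V}^{-1}]$. The whole argument is essentially bookkeeping; the only mildly subtle point is confirming that the split sequences used above are genuine conflations in the extriangulated structure of $\C$, which is standard.
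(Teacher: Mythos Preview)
Your proposal is correct and is precisely the straightforward unwinding of definitions that the paper intends: the paper does not supply a proof at all, simply stating that ``the following is straightforward'' after noting that $\P$ obviously forms a gHTCP. Your verification of (Hov1), (Hov2), items (1)--(4), and the appeal to Theorem~\ref{thm:RL=LR_implies_universality} is exactly the intended route, and your remark about split sequences being conflations is the only point that requires any care.
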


This equivalence $\Phi$ is nothing other than the equivalence $\C/\I\xto{\sim}\C[\mathbb{S}^{-1}]$ appearing in Example \ref{ex:univ_of_stable_cat}.
Note that the above construction of $\C[\mathbb{V}^{-1}]$ does not depend on extriangulated structures on $\C$.
Since an additive category always has a splitting exact structure, any additive quotient can be considered as a gHTCP localization.

\subsubsection{Recollement of triangulated categories}
We shall explain that a recollement of triangulated categories gives an example of HTCP localizations.
More generally, we shall investigate the following Iyama-Yang's realization of Verdier quotients as subfactors via HTCP localizations (see also \cite{Li16}).
We introduce the following notions.

\begin{definition}
Let $\C$ be an extriangulated category and $\N$ its full subcategory.
We define the full subcategories
\begin{itemize}
\item $\N^{\perp_1}:=\{X\in\C\mid \mathbb{E}(\N, X)=0\}$;
\item $\N^{\perp}:=\{X\in\C\mid \mathbb{E}(\N, X)=0=\C(\N, X)\}$.
\end{itemize}
The subcategories ${}^{\perp_1}\N$ and ${}^{\perp}\N$ are defined dually.
\end{definition}

\begin{corollary}\label{cor:IYa17}
Let $\C =(\C, [1], \Delta)$ be a triangulated category and $\N$ its thick subcategory
and denote by $\C_\N$ the Verdier quotient of $\C$ by $\N$.
We suppose that $\N$ has a cotorsion pair $(\S,\V)$; $\C$ has cotorsion pairs $(\S, \S^{\perp_1}), ({^{\perp_1}\V},\V)$.
Then, $\P:=((\S, \S^{\perp_1}),({^{\perp_1}\V},\V))$ forms an HTCP.
Moreover, the canonical functor $\Z\subseteq\C\to\C_\N$ induces an equivalence $\Z/\I\simeq \C_\N$.
Here we use the symbols $\S^{\perp_1}$ and ${^{\perp_1}\V}$ regarding $\S$ and $\V$ as subcategories in $\C$.
\end{corollary}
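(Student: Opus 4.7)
The plan is to verify that $\P := ((\S, \S^{\perp_1}), ({^{\perp_1}\V}, \V))$ is a Hovey twin cotorsion pair and then identify the Nakaoka-Palu class $\mathbb{W}$ with the class of morphisms whose cones lie in $\N$; once this is done, $\C[\mathbb{W}^{-1}]$ coincides with the Verdier quotient $\C_\N$, and the desired equivalence $\Z/\I \simeq \C_\N$ will follow immediately from Theorem \ref{thm:Nakaoka-Palu's_HTCP}.

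First I would verify the four HTCP axioms for $\P$. Condition (Hov3) is the standing hypothesis on the two cotorsion pairs. Condition (Hov1) reduces to $\mathbb{E}(\S, \V) = 0$, which holds because $\S, \V \subseteq \N$ and $(\S, \V)$ is a cotorsion pair inside the thick triangulated subcategory $\N$, whose extensions agree with those computed in $\C$. For (Hov4), both $\cone(\V, \S)$ and $\cocone(\V, \S)$ equal $\N$, directly from the cotorsion pair axiom for $(\S, \V)$ on $\N$. The only nontrivial axiom is (Hov2), namely $\S \cap \S^{\perp_1} = {^{\perp_1}\V} \cap \V$. For the inclusion $\subseteq$: given $X \in \S \cap \S^{\perp_1}$, the cotorsion resolution $X \to V \to S'$ in $\N$ (with $V \in \V$, $S' \in \S$) splits by $X \in \S^{\perp_1}$, so $X$ is a direct summand of $V$, forcing $X \in \V$; while $X \in \S \subseteq {^{\perp_1}\V}$ is automatic. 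The reverse inclusion is dual.

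Next I would identify $\mathbb{W} = \wfib \circ \wcof$ with $\mathbb{N} := \{f \in \Mor\C \mid \cone(f) \in \N\}$. The direction $\mathbb{W} \subseteq \mathbb{N}$ is the easier half: given a factorization $f = f_2 \circ f_1$ with $\cone(f_1) \in \S$ and $\cocone(f_2) \in \V$, the octahedral axiom produces a triangle $\cone(f_1) \to \cone(f) \to \cone(f_2) \to \cone(f_1)[1]$ in which $\cone(f_1) \in \S \subseteq \N$ and $\cone(f_2) \cong \cocone(f_2)[1] \in \V[1] \subseteq \N$, so $\cone(f) \in \N$ by thickness. For the reverse $\mathbb{N} \subseteq \mathbb{W}$: given $f: X \to Y$ with $C := \cone(f) \in \N$, resolve $C$ inside $\N$ by a triangle $V \to S \to C \to V[1]$ with $V \in \V$, $S \in \S$, then splice with the defining triangle $X \to Y \to C \to X[1]$ of $f$ via the octahedral axiom to obtain an intermediate object $Z$ and a factorization $f : X \xto{f_1} Z \xto{f_2} Y$ such that $\cone(f_1) \cong S \in \S$ and $\cone(f_2) \cong V[1]$, i.e.\ $\cocone(f_2) \cong V \in \V$.

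Once $\mathbb{W} = \mathbb{N}$, the Gabriel-Zisman localization $\C[\mathbb{W}^{-1}]$ is nothing but the Verdier quotient $\C_\N$, and Theorem \ref{thm:Nakaoka-Palu's_HTCP} supplies an equivalence $\Psi : \Z/\I \xto{\sim} \C[\mathbb{W}^{-1}] \simeq \C_\N$ induced by the canonical composite $\Z \hookrightarrow \C \to \C_\N$. The main obstacle I anticipate is the octahedral bookkeeping in the $\mathbb{N} \subseteq \mathbb{W}$ direction, where the intermediate triangles must be arranged so that the cone lands in $\S$ and the cocone in $\V$ on the nose, not merely in $\N$; the remaining verifications are direct.
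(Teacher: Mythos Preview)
Your proposal is correct and follows essentially the same route as the paper: verify that $\P$ is an HTCP, then identify the class $\mathbb{W}$ with the multiplicative system $\mathbb{S}=\{f\mid\cone(f)\in\N\}$ defining the Verdier quotient, so that Theorem~\ref{thm:Nakaoka-Palu's_HTCP} yields the equivalence $\Z/\I\simeq\C[\mathbb{W}^{-1}]=\C_\N$. The paper in fact takes the HTCP axioms for granted and only writes out the equality $\mathbb{W}=\mathbb{S}$; your explicit verification of (Hov1)--(Hov4) is a welcome addition, and your octahedral splicing for $\mathbb{S}\subseteq\mathbb{W}$ is exactly the pullback diagram the paper draws.
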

\begin{proof}
Since $\P$ forms an HTCP, we have an equivalence $\Z/\I\simeq \C[\mathbb{W}^{-1}]$.
Let $\mathbb{S}$ be the multiplicative system associated to the thick subcategory $\N$, namely, $\mathbb{S}:=\{f\in\Mor(\C)\mid \cone(f)\in\N\}$.
We identify $\C_\N$ with $\C[\mathbb{S}^{-1}]$.
It suffices to show $\mathbb{W}=\mathbb{S}$.
By definition, we get $\mathbb{W}\subseteq \mathbb{S}$.
To show the converse, consider a triangle $X\xto{f}Y\to N\to X[1]$ with $N\in\N$.
Resolving $N$ by the cotorsion pair $(\S,\V)$, we obtain the following commutative diagram:
$$
\xymatrix@R=16pt{
&V_N\ar[d]\ar@{=}[r]&V_N\ar[d]\\
X\ar[r]^{f_1}\ar@{=}[d]&Y'\ar@{}[rd]|{\textnormal{(Pb)}}\ar[d]_{f_2}\ar[r]&S_N\ar[d]\\
X\ar[r]^f&Y\ar[r]&N
}
$$
where all rows and columns are triangles and $S_N\in\S, V_N\in\V$.
Thus we get a factorization $f=f_2\circ f_1$ with $f_1\in\wcof$ and $f_2\in\wfib$.
Hence $f\in\mathbb{W}$.
\end{proof}

\begin{remark}
The equivalence $\Z/\I\simeq\C_\N$ is obtained in \cite[Thm. 1.1]{IYa17} with a more explicit construction.
\end{remark}

The following result can be regarded as a special case of Corollary \ref{cor:IYa17}.

\begin{corollary}\cite[Cor. 6.20]{Nak18}\label{cor:Nak18}
Let $(\N, \C, \C_{\N})$ be a recollement of triangulated categories and $(\S,\V)$ a cotorsion pair in $\N$.
Then, the pair $\P:=((\S,\S^{\perp_{1}}),({^{\perp_1}\V}, \V))$ forms an HTCP in $\C$.
Moreover, the canonical functor $\Z\subseteq\C\to\C_\N$ induces an equivalence $\Z/\I\simeq \C_\N$.
\end{corollary}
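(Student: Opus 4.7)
The plan is to reduce this corollary to Corollary~\ref{cor:IYa17}, for which it suffices to verify that $(\S,\S^{\perp_{1}})$ and $({^{\perp_{1}}\V},\V)$ really are cotorsion pairs in $\C$; the rest is automatic since a recollement $(\N,\C,\C_{\N})$ exhibits $\N$ as a thick subcategory of the triangulated category $\C$ with $\C_{\N}$ equivalent to the Verdier quotient $\C/\N$. Throughout I would work with the six exact functors $i^{*},i_{*},i^{!}$ and $j_{!},j^{*},j_{*}$ of the recollement, using the full faithfulness of $i_{*},j_{!},j_{*}$, the identities $i^{*}j_{!}=0=i^{!}j_{*}$ and $i^{*}i_{*}\cong\mathrm{id}\cong i^{!}i_{*}$, and the two canonical triangles
\[ j_{!}j^{*}X\to X\to i_{*}i^{*}X\to j_{!}j^{*}X[1], \qquad i_{*}i^{!}X\to X\to j_{*}j^{*}X\to i_{*}i^{!}X[1]. \]

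The first step is to compute the orthogonals inside $\C$. Using the adjunctions $i^{*}\dashv i_{*}\dashv i^{!}$ together with full faithfulness of $i_{*}$, for $S\in\S$, $V\in\V$ and $X,Y\in\C$ one has isomorphisms $\C(S,Y[1])\cong \N(S,i^{!}Y[1])$ and $\C(X,V[1])\cong \N(i^{*}X,V[1])$, whence
\[ \S^{\perp_{1}}=\{\,Y\in\C\mid i^{!}Y\in\V\,\}, \qquad {^{\perp_{1}}\V}=\{\,X\in\C\mid i^{*}X\in\S\,\}. \]
In particular $\V\subseteq\S^{\perp_{1}}$ and $\S\subseteq{^{\perp_{1}}\V}$, and since $j^{*}$ vanishes on $\N$ every object of the form $j_{*}Z$ belongs to $\S^{\perp_{1}}$ and every $j_{!}Z$ belongs to ${^{\perp_{1}}\V}$. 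Both orthogonals are closed under direct summands, and closed under extensions via the long exact sequence.

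The core step is to build the two resolutions for $(\S,\S^{\perp_{1}})$. Given $X\in\C$, I would resolve $i^{!}X\in\N$ using the cotorsion pair $(\S,\V)$ in $\N$ to obtain triangles $V_{0}\to S_{0}\to i^{!}X$ and $i^{!}X\to V_{0}'\to S_{0}'$ with $S_{0},S_{0}'\in\S$ and $V_{0},V_{0}'\in\V$, apply $i_{*}$, and combine with the recollement triangle $i_{*}i^{!}X\to X\to j_{*}j^{*}X$ by the octahedral axiom applied to the composable pair $i_{*}S_{0}\to i_{*}i^{!}X\to X$. This produces a triangle $V\to i_{*}S_{0}\to X\to V[1]$ in which $V$ is an extension of $j_{*}j^{*}X[-1]$ by $i_{*}V_{0}$; both outer terms already lie in $\S^{\perp_{1}}$, so $V\in\S^{\perp_{1}}$ by extension-closure. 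A dual homotopy-pushout construction starting from $i^{!}X\to V_{0}'\to S_{0}'$ yields the cocone resolution $X\to W\to i_{*}S_{0}'$. The cotorsion pair $({^{\perp_{1}}\V},\V)$ in $\C$ is treated symmetrically, starting from the other recollement triangle and resolving $i^{*}X$ in $\N$; the analogous check that the middle term lands in ${^{\perp_{1}}\V}$ reduces, after applying $i^{*}$, to the vanishings $i^{*}j_{!}=0$ and $i^{*}i_{*}\cong\mathrm{id}$.

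The main obstacle is juggling the octahedron with the recollement triangles so that the middle term of the resolution belongs to $\S$ (or $\V$) while the outer term sits in the correct orthogonal; the mechanism that makes this succeed is precisely that the recollement complements $j_{!}j^{*}X$ and $j_{*}j^{*}X$ are orthogonal to the whole of $\N$, and hence to $\S$ and $\V$ in particular. Once both cotorsion pairs in $\C$ are verified, Corollary~\ref{cor:IYa17} applies verbatim and delivers the HTCP property of $\P$ together with the desired equivalence $\Z/\I\simeq\C_{\N}$.
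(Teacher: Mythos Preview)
Your approach is correct and matches the paper's intent: the paper simply states that this corollary ``can be regarded as a special case of Corollary~\ref{cor:IYa17}'' and gives no further argument, relying on \cite{Nak18} for the verification that $(\S,\S^{\perp_1})$ and $({}^{\perp_1}\V,\V)$ are cotorsion pairs in $\C$. You supply exactly this missing verification via the recollement triangles and the octahedral axiom, which is the natural route and is essentially what is done in the cited reference.
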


Let us remark that, in Corollary \ref{cor:Nak18}, the thick subcategory $\N$ always has a cotorsion pair, for example, by setting $(\S,\V):=(\N,0)$.

\subsubsection{Recollement of abelian categories}
It is pointed out in \cite[Rem. 5.27]{NP19} that a recollement of abelian categories can be rarely regarded as an HTCP localization.
However, it is still a gHTCP localization under a certain assumption.
Recall the definition of recollement and its needed properties.

\begin{definition-proposition}\label{def:recollement}
Let $\C, \N$ and $\C_\N$ be abelian categories.
A \textnormal{recollement} of abelian categories is a diagram of functors between abelian categories of the form
$$\xymatrix@C=1.2cm{\N\ar[r]|-{\mathsf{i}}
&\C\ar[r]|-{\mathsf{e}}\ar@/^1.2pc/[l]^-{\mathsf{p}}\ar_-{\mathsf{q}}@/_1.2pc/[l]
&\C_\N \ar@/^1.2pc/[l]^{\mathsf{r}}\ar@/_1.2pc/[l]_{\mathsf{l}}}$$
with the conditions:
\begin{itemize}
\item $(\mathsf{q},\mathsf{i},\mathsf{p})$ and $(\mathsf{l},\mathsf{e},\mathsf{r})$ form adjoint triples;
\item The functors $\mathsf{i}, \mathsf{l}$ and $\mathsf{r}$ are fully faithful;
\item $\Im \mathsf{i}=\Ker \mathsf{e}$.
\end{itemize}
We denote the recollement by $(\N,\C,\C_\N)$ for short. 
In this case, 
\begin{enumerate}
\item[\textnormal{(1)}] $\Im \mathsf{r}=\N^{\perp}$ and $\Im \mathsf{l}={}^{\perp}\N$ hold;
\item[\textnormal{(2)}] for any $X\in\C$, there exist the following exact sequences
\begin{eqnarray}
&&0\to N\to \mathsf{le}(X)\to X\to \mathsf{iq}(X)\to 0\label{left_defining}\\
&&0\to \mathsf{ip}(X)\to X\to \mathsf{re}(X)\to N'\to 0\label{right_defining}
\end{eqnarray}
with $N,N'\in\N$.
\end{enumerate}
\end{definition-proposition}

We refer to \cite[Section. 2]{Psa14} for details.
A recollement can be considered as a special case of Serre quotients in the following sense.

\begin{proposition}\cite[Rem. 2.3]{Psa14}
Consider the Serre quotient $\C_\N$ of an abelian category $\C$ with respect to its Serre subcategory $\N$.
If the quotient functor $\mathsf{e}:\C\to\C_\N$ admits a right adjoint $\mathsf{r}$ and a left adjoint $\mathsf{l}$, then the inclusion $\mathsf{i}:\N\to \C$ also admits a right adjoint $\mathsf{p}$ and a left adjoint $\mathsf{q}$.
Moreover, these six functors form a recollement.
\end{proposition}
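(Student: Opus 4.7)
The plan is to construct the adjoints $\mathsf{p}$ and $\mathsf{q}$ explicitly from the unit and counit of the given adjunctions $\mathsf{l}\dashv\mathsf{e}\dashv\mathsf{r}$, and then to check the three defining conditions of a recollement in turn.

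First, I would recall the standard consequence of the Serre quotient structure: because $\mathsf{e}$ is the universal exact functor annihilating $\N$, the one-sided adjoints $\mathsf{l}$ and $\mathsf{r}$ are automatically fully faithful, equivalently the unit $\mathrm{id}_{\C_\N}\to\mathsf{el}$ and the counit $\mathsf{er}\to\mathrm{id}_{\C_\N}$ are natural isomorphisms. Combined with exactness of $\mathsf{e}$, this implies that for every $X\in\C$ the unit $\eta_X:X\to\mathsf{re}(X)$ and the counit $\varepsilon_X:\mathsf{le}(X)\to X$ are sent by $\mathsf{e}$ to isomorphisms, and therefore their kernels and cokernels in $\C$ lie in $\Ker\mathsf{e}=\N$.

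Next I would define $\mathsf{p}(X):=\ker(\eta_X)$ and $\mathsf{q}(X):=\operatorname{coker}(\varepsilon_X)$, both landing in $\N$ by the previous step, and verify the adjunction $\mathsf{i}\dashv\mathsf{p}$ directly. For $N\in\N$ and $X\in\C$, given $f:\mathsf{i}(N)\to X$, the composite $\eta_X\circ f:\mathsf{i}(N)\to\mathsf{re}(X)$ transposes under $\mathsf{e}\dashv\mathsf{r}$ to a morphism $\mathsf{e}\mathsf{i}(N)\to\mathsf{e}(X)$; but $\mathsf{e}\mathsf{i}(N)=0$, so $\eta_X\circ f=0$ and $f$ factors uniquely through $\ker\eta_X=\mathsf{p}(X)$. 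This yields a natural bijection $\C(\mathsf{i}(N),X)\cong\N(N,\mathsf{p}(X))$, and functoriality of $\mathsf{p}$ in $X$ is forced by the universal property of kernels. The adjunction $\mathsf{q}\dashv\mathsf{i}$ follows from the dual argument applied to $\varepsilon$.

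It remains to collect the recollement axioms from Definition-Proposition~\ref{def:recollement}: $(\mathsf{q},\mathsf{i},\mathsf{p})$ and $(\mathsf{l},\mathsf{e},\mathsf{r})$ are adjoint triples by the previous paragraph and by hypothesis; the functors $\mathsf{i}$, $\mathsf{l}$, $\mathsf{r}$ are fully faithful (the first tautologically, the others by the first step); and $\Im\mathsf{i}=\N=\Ker\mathsf{e}$ is the defining property of the Serre quotient. The main technical obstacle I anticipate is the first step --- establishing the full faithfulness of $\mathsf{l}$ and $\mathsf{r}$ --- since it is not a formal consequence of adjointness but rests on the precise universal property of $\mathsf{e}$ as an exact Gabriel-Zisman localization with kernel exactly $\N$; once this is in hand, the remaining verifications are routine unit/counit manipulations.
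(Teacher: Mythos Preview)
The paper does not give its own proof of this proposition: it is stated with a citation to \cite[Rem.~2.3]{Psa14} and used as a black box. So there is nothing in the paper to compare your argument against line by line.

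That said, your outline is correct and is essentially the standard proof. A couple of remarks. First, the step you flag as the main obstacle---that $\mathsf{l}$ and $\mathsf{r}$ are automatically fully faithful once they exist---is indeed the only nontrivial ingredient; it is a classical fact from Gabriel's localization theory (see e.g.\ \cite{Pop73}) that a one-sided adjoint of a Serre quotient functor is a fully faithful section, and you should simply cite this rather than attempt to reprove it. Second, your construction of $\mathsf{p}$ and $\mathsf{q}$ as kernel of the unit and cokernel of the counit is exactly the right one, and in fact recovers the exact sequences (\ref{left_defining}) and (\ref{right_defining}) in Definition--Proposition~\ref{def:recollement}: your $\mathsf{p}(X)=\ker\eta_X$ appears at the left end of (\ref{right_defining}), and your $\mathsf{q}(X)=\operatorname{coker}\varepsilon_X$ at the right end of (\ref{left_defining}). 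So your argument not only proves the proposition but also immediately yields the functorial exact sequences the paper records afterwards.
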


The following is an abelian version of Corollary \ref{cor:Nak18}.

\begin{corollary}\label{cor:abelian_recollement}
Let $(\N, \C, \C_{\N})$ be a recollement of abelian categories.
Assume that $\C$ has enough projectives and enough injectives, and $\N$ has a cotorsion pair $(\S,\V)$ with $\Ext^2_\C(\S,\V)=0$.
Then, the pair $((\S,\S^{\perp_{1}}),({^{\perp_1}\V}, \V))$ forms a gHTCP in $\C$.
Moreover, the canonical functor $\Z\subseteq\C\to\C_\N$ induces an equivalence $\Z/\I\simeq \C_\N$.
Here we use the symbols $\S^{\perp_1}$ and ${^{\perp_1}\V}$ regarding $\S$ and $\V$ as subcategories in $\C$.
\end{corollary}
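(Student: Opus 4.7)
The plan is to verify that $\P=((\S,\S^{\perp_1}),({^{\perp_1}\V},\V))$ forms a gHTCP in $\C$ and then to identify $\Z/\I$ with the Serre quotient $\C_\N$, so that the conclusion reduces to Theorem~\ref{thm:RL=LR_implies_universality}. First I would dispose of (Hov1) and (Hov2): since $\N$ is a Serre subcategory of $\C$, it is closed under extensions, so $\Ext^1_\C(M,M')=\Ext^1_\N(M,M')$ for $M,M'\in\N$; in particular $\Ext^1_\C(\S,\V)=0$, which gives $\S\subseteq{^{\perp_1}\V}$ and $\V\subseteq\S^{\perp_1}$, and the cotorsion pair equalities inside $\N$ force $\S\cap\S^{\perp_1}=\S\cap\V={^{\perp_1}\V}\cap\V$.

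Next, for each $X\in\C$ I would produce the two one-sided cotorsion-pair approximations. Starting from the right defining sequence $0\to\mathsf{ip}(X)\to X\to\mathsf{re}(X)\to N'\to 0$ of the recollement and splitting it via $X':=\Im(X\to\mathsf{re}(X))$, I take a left $\V$-approximation $0\to\mathsf{ip}(X)\to V_1'\to S_1'\to 0$ in $\N$ and push out to obtain $0\to X\to\tilde T\to S_1'\to 0$ with $\tilde T$ sitting in $0\to V_1'\to\tilde T\to X'\to 0$. In parallel, I take a right $\S$-approximation $0\to V_{N'}\to S_{N'}\to N'\to 0$ of $N'$ in $\N$ and pull $0\to X'\to\mathsf{re}(X)\to N'\to 0$ back along $S_{N'}\to N'$ to produce $P'\in\S^{\perp_1}$ fitting in $0\to X'\to P'\to S_{N'}\to 0$. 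The hypothesis $\Ext^2_\C(\S,\V)=0$ makes $\Ext^1(P',V_1')\to\Ext^1(X',V_1')$ surjective, which lets me lift the class of $\tilde T$ to an extension $0\to V_1'\to T^X\to P'\to 0$ compatible with $\tilde T\to T^X$; composing then gives $0\to X\to T^X\to S^X\to 0$ with $T^X\in\S^{\perp_1}$ and $S^X\in\S$ an extension of $S_{N'}$ by $S_1'$. The other approximation is dual, starting from the left defining sequence, and in both cases the approximation property follows from the standard Ext-lifting argument.

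With these in place, $\S^{\perp_1}$ and ${^{\perp_1}\V}$ are extension-closed in $\C$, so Lemma~\ref{lem:contained_in_Z} and Proposition~\ref{prop:key} deliver the comparison $\Phi\colon\Z/\I\to\C[\mathbb{V}^{-1}]$. On the other hand a $\T$-inflation has cokernel in $\S\subseteq\N$ and a $\U$-deflation has kernel in $\V\subseteq\N$, so the Serre quotient $\mathsf{e}\colon\C\to\C_\N$ inverts $\mathbb{V}$ and induces $\bar{\mathsf{e}}\colon\C[\mathbb{V}^{-1}]\to\C_\N$; since $\I\subseteq\N$, the restriction $\mathsf{e}|_\Z$ factors through $\pi$ to yield $\bar{\mathsf{e}}_\Z=\bar{\mathsf{e}}\circ\Phi\colon\Z/\I\to\C_\N$. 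Essential surjectivity of $\bar{\mathsf{e}}_\Z$ is automatic, as the conflations defining $Q_{LR}(Z)\in\Z$ have kernels and cokernels in $\S\cup\V\subseteq\N$, so $\mathsf{e}(Q_{LR}(Z))\cong\mathsf{e}(Z)$ for every $Z\in\C$. Full faithfulness uses the adjunction identity $\C_\N(X,Y)\cong\C(X,\mathsf{re}(Y))$ together with the orthogonality $X\in{^{\perp_1}\V}$, $Y\in\S^{\perp_1}$ to reduce Serre-quotient roofs between objects of $\Z$ to honest morphisms in $\C(X,Y)$, unique modulo $\I$. Granted the equivalence of $\bar{\mathsf{e}}_\Z$, one has that of $\Phi$, and Theorem~\ref{thm:RL=LR_implies_universality}\,(iv)$\Rightarrow$(i) then certifies $\P$ as a gHTCP and identifies $\mathsf{e}|_\Z$ with the claimed equivalence $\Z/\I\simeq\C_\N$.

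The principal obstacle should be the splicing step in the construction of the approximations: the cotorsion pair in $\N$ provides resolutions only inside $\N$, and the two ``$\N$-tails'' $\mathsf{ip}(X)$ and $N'$ of the defining sequence have to be cancelled simultaneously and compatibly, which is exactly where the extra hypothesis $\Ext^2_\C(\S,\V)=0$ (absent from the triangulated version, Corollary~\ref{cor:Nak18}) enters to lift the relevant extension class across $S_{N'}\in\S$. A secondary technical point is the full faithfulness of $\bar{\mathsf{e}}_\Z$, which, while morally clear from the orthogonality in $\Z$, requires either a careful roof-calculus or a direct use of the adjoints $\mathsf{l},\mathsf{r}$ to identify the kernel of $\mathsf{e}|_\Z$ precisely with morphisms factoring through $\I$.
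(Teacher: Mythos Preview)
Your approximation construction is valid but differs from the paper's: the paper passes to the syzygy $\Omega X$ and applies the \emph{left} defining sequence of $\Omega X$ to build the ${^{\perp_1}\V}$-deflation of $X$ without invoking $\Ext^2_\C(\S,\V)=0$ at all, reserving that hypothesis for a later step; the $\S^{\perp_1}$-inflation is obtained dually via the cosyzygy. So your identification of the splicing as ``the principal obstacle'' is off---the one-sided cotorsion pairs exist without $\Ext^2$.

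The genuine gap is in your deduction of the gHTCP property. You try to prove that $\bar{\mathsf{e}}_\Z=\bar{\mathsf{e}}\circ\Phi\colon\Z/\I\to\C_\N$ is an equivalence and then conclude that $\Phi$ is one; but a composite being an equivalence only forces the first factor to be a split monomorphism, not full, so Theorem~\ref{thm:RL=LR_implies_universality}(iv) is not yet available---nothing you have written rules out extra morphisms in $\C[\mathbb{V}^{-1}]$ that $\bar{\mathsf{e}}$ collapses. Your full-faithfulness sketch for $\bar{\mathsf{e}}_\Z$ is also incomplete: the adjunction gives $\C_\N(\mathsf{e}X,\mathsf{e}Y)\cong\C(X,\mathsf{re}Y)$, yet for $Y\in\Z$ neither $\mathsf{ip}(Y)\in\I$ nor any control over the cokernel $N'$ in the right defining sequence follows from $Y\in\S^{\perp_1}\cap{^{\perp_1}\V}$, since those orthogonalities concern $\Ext^1$ and say nothing about $\Hom(X,\N)$. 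The paper avoids both issues by reversing the order of argument: it first verifies $Q_{LR}\cong Q_{RL}$ directly, forming the pushout of $p_X\colon RX\to X$ along $\iota_{RX}\colon RX\to LRX$ and using $\Ext^2_\C(\S,V_X)=0$ to show the resulting object $X'$ lies in $\S^{\perp_1}$, so that $X\to X'$ is a $\T$-inflation and $LRX\to X'$ a $\U$-deflation simultaneously. Only after the gHTCP is established does the paper invoke Proposition~\ref{prop:Loc_sends_W'_to_iso} (which already needs the gHTCP) together with the equality $\mathbb{W}=\mathbb{S}$ of the Serre multiplicative system to obtain $\Z/\I\simeq\C[\mathbb{V}^{-1}]\simeq\C[\mathbb{W}^{-1}]=\C_\N$.
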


To show Corollary \ref{cor:abelian_recollement}, we include more investigations on an extriangulated category with a gHTCP.

\begin{proposition}\label{prop:Loc_sends_W'_to_iso}
Let $\C$ be an extriangulated category with a gHTCP $\P$.
Assume that both $\S$ and $\V$ are extension-closed and $\mathbb{E}(\S,\T)=\mathbb{E}(\U,\V)=0$.
Then the functor $\mathsf{L}_\mathbb{V}:\C\to\C[\mathbb{V}^{-1}]$ sends $\mathbb{W}$ to a class of isomorphisms.
Moreover, there exists an equivalence $\Psi:\C[\mathbb{V}^{-1}]\xto{\sim}\C[\mathbb{W}^{-1}]$, uniquely up to isomorphism, such that $\mathsf{L}_\mathbb{W}\cong\Psi\circ\mathsf{L}_\mathbb{V}$.
\end{proposition}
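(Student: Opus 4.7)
The plan is to establish the first assertion via the equivalence $\Phi\colon\Z/\I\xto{\sim}\C[\mathbb{V}^{-1}]$ granted by Theorem \ref{thm:RL=LR_implies_universality}, together with the two factorizations $\mathsf{L}_\mathbb{V}\cong\Phi\circ Q_{LR}\cong\Phi\circ Q_{RL}$. Since any $f\in\mathbb{W}$ admits a factorization $f=f_2\circ f_1$ with $f_1\in\wcof$ and $f_2\in\wfib$, it is enough to treat $\wfib$ and $\wcof$ separately.

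For a morphism $f_2\in\wfib$, Lemma \ref{lem:misc1} (applicable thanks to $\V$ being extension-closed and $\mathbb{E}(\U,\V)=0$) shows that $R\pi(f_2)$ is an isomorphism in $\U/\I$. Post-composing with the functor $L$ gives that $Q_{LR}(f_2)=LR\pi(f_2)$ is an isomorphism, so $\mathsf{L}_\mathbb{V}(f_2)\cong\Phi\circ Q_{LR}(f_2)$ is an isomorphism as well. Dually, for $f_1\in\wcof$, the dual of Lemma \ref{lem:misc1} applies since $\S$ is extension-closed and $\mathbb{E}(\S,\T)=0$, so $L\pi(f_1)$ is an isomorphism in $\T/\I$; then $Q_{RL}(f_1)=RL\pi(f_1)$ is an isomorphism, whence $\mathsf{L}_\mathbb{V}(f_1)\cong\Phi\circ Q_{RL}(f_1)$ is an isomorphism. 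Composing yields $\mathsf{L}_\mathbb{V}(f)=\mathsf{L}_\mathbb{V}(f_2)\circ\mathsf{L}_\mathbb{V}(f_1)$ as a composition of isomorphisms.

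For the equivalence, the universal property of $\mathsf{L}_\mathbb{W}$ together with the just-established inversion of $\mathbb{W}$ by $\mathsf{L}_\mathbb{V}$ produces a unique (up to isomorphism) functor $\Psi$ with $\Psi\circ\mathsf{L}_\mathbb{W}\cong\mathsf{L}_\mathbb{V}$. Conversely, the inclusion $\mathbb{V}\subseteq\mathbb{W}$ forces $\mathsf{L}_\mathbb{W}$ to invert $\mathbb{V}$, and the universal property of $\mathsf{L}_\mathbb{V}$ then yields a unique functor $F$ with $F\circ\mathsf{L}_\mathbb{V}\cong\mathsf{L}_\mathbb{W}$. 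Computing $(\Psi\circ F)\circ\mathsf{L}_\mathbb{V}\cong\Psi\circ\mathsf{L}_\mathbb{W}\cong\mathsf{L}_\mathbb{V}$ and $(F\circ\Psi)\circ\mathsf{L}_\mathbb{W}\cong F\circ\mathsf{L}_\mathbb{V}\cong\mathsf{L}_\mathbb{W}$ and invoking the uniqueness clauses of the two universal properties give $\Psi\circ F\cong\id$ and $F\circ\Psi\cong\id$, so $\Psi$ is an equivalence with quasi-inverse $F$.

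The only step requiring a bit of care is the dualization of Lemma \ref{lem:misc1} for $\wcof$; its proof adapts verbatim by flipping the pullback diagram to a pushout, resolving by the left cotorsion pair $(\S,\T)$, and using $\mathbb{E}(\S,\T)=0$ in place of $\mathbb{E}(\U,\V)=0$, so I expect no genuine obstacle here. Everything else is a formal manipulation of the universal properties of Gabriel-Zisman localizations.
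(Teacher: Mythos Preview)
Your proof is correct and follows essentially the same approach as the paper: both use Lemma~\ref{lem:misc1} and its dual to show that $Q_{LR}(\wfib)$ and $Q_{RL}(\wcof)$ consist of isomorphisms, then invoke the gHTCP identification $Q_{LR}\cong Q_{RL}$ (equivalently $\mathsf{L}_\mathbb{V}\cong\Phi\circ Q_{LR}\cong\Phi\circ Q_{RL}$) to conclude, with the equivalence $\Psi$ obtained from the universal properties. Your version simply spells out in more detail the construction of the quasi-inverse $F$ that the paper leaves implicit.
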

\begin{proof}
By Lemma \ref{lem:misc1} and the dual, we know that $Q_{LR}(\wfib)$ and $Q_{RL}(\wcof)$ form classes of isomorphisms in $\Z/\I$.
Since $\P$ is a gHTCP, the functor $Q_{LR}\cong Q_{RL}\cong \mathsf{L}_\mathbb{V}$ sends $\mathbb{W}$ to a class of isomorphisms.
Since $\mathbb{V}\subseteq\mathbb{W}$, the functor $\mathsf{L}_\mathbb{W}:\C\to\C[\mathbb{W}^{-1}]$ sends $\mathbb{V}$ to isomorphisms. The second assertion follows from the universality.
\end{proof}

We are in position to prove Corollary \ref{cor:abelian_recollement}.

\begin{proof}

(A) The first step shows that the pair $((\S,\S^{\perp_{1}}),({^{\perp_1}\V}, \V))$ is a gHTCP.

(1) We shall show that $({^{\perp_1}\V},\V)$ is a right cotorsion pair.
Let $X$ be an object in $\C$ and consider the first syzygy of $X$, namely, an exact sequence
\begin{equation}\label{first_syzygy}
0\to \Omega X\xto{a} P(X)\to X\to 0
\end{equation}
in $\C$ with $P(X)\in\proj(\C)$.
Next, we consider an exact sequence (\ref{left_defining}) of $\Omega X$ in Proposition \ref{def:recollement}
\begin{equation}\label{left-defining}
0\to N\to \mathsf{le}(\Omega X)\to \Omega X\xto{f} \mathsf{iq}(\Omega X)\to 0
\end{equation}
with $N, \mathsf{iq}(\Omega X)\in\N$ and $\mathsf{le}(\Omega X)\in{^{\perp}\N}$.
Resolve $\mathsf{iq}(\Omega X)$ by the cotorsion pair $(\S,\V)$, and get an exact sequence $0\to \mathsf{iq}(\Omega X)\xto{g} V\to S\to 0$ with $V\in\V$ and $S\in\S$.
Put $h:=g\circ f:\Omega X\to V$.
Since $f$ is surjective and $g$ is injective, we have $\Cok h\cong S$ and $\Ker h = \Ker f$.
By taking a pushout of $a:\Omega X\to P(X)$ along $h$, we obtain the following commutative diagram:
$$
\xymatrix@R=16pt{
&0\ar[d]&0\ar[d]&&\\
&\Ker h\ar[d]\ar@{=}[r]&\Ker h\ar[d]&&\\
0\ar[r]&\Omega X\ar@{}[rd]|{\textnormal{(Po)}}\ar[d]_h\ar[r]^a&P(X)\ar[d]^{h'}\ar[r]&X\ar@{=}[d]\ar[r]&0\\
0\ar[r]&V\ar[d]\ar[r]&X'\ar[d]^b\ar[r]^c&X\ar[r]&0\\
&S\ar[d]\ar@{=}[r]&S\ar[d]&&\\
&0&0&&
}
$$
in which all rows and columns are exact.
The second row is a desired one.
It suffices to show $X'\in {^{\perp_1}\V}$.
Since $\Ker h$ is a factor of $\mathsf{le}(\Omega X)\in {}^{\perp}\N$,
we get $\Ker h\in{}^{\perp_0}\N$.
We deduce from the second column in the above diagram an exact sequence $0\to \Ker h\to P(X)\to \Ker b\to 0$.
The fact $\Ker h\in{}^{\perp_0}\V$ and $P(X)\in{}^{\perp_1}\V$ forces $\Ker b\in{}^{\perp_1}\V$.
The exact sequence $0\to \Ker b\to X'\to S\to 0$ shows $X'\in{}^{\perp_1}\V$.
Therefore, the morphism $c:X'\to X$ is a ${}^{\perp_1}\V$-deflation associated to the right cotorsion pair $({}^{\perp_1}\V,\V)$.
By the dual argument, we can verify that $(\S,\S^{\perp_1})$ forms a left cotorsion pair.

(2) The condition (Hov1) is obvious. 

(3) The condition (Hov2) follows from $\S\cap\S^{\perp_1}=\S\cap\V={}^{\perp_1}\V\cap\V$.

(4) We shall show that $\Im Q_{LR}$ is contained in $\Z/\I$.
Take an object $X\in\C$.
Following the definition of $Q_{LR}$, we consider the following commutative diagram
\begin{equation}\label{To_show_LR=RL}
\xymatrix@R=16pt{
&&0\ar[d]&0\ar[d]&\\
0\ar[r]&V_X\ar@{=}[d]\ar[r]&RX\ar[d]_{\iota_{RX}}\ar[r]^{p_X}&X\ar[d]^\iota\ar[r]&0\\
0\ar[r]&V_X\ar[r]&LRX\ar[d]\ar[r]^p&X'\ar[d]\ar[r]&0\\
&&S^{RX}\ar[d]\ar@{=}[r]&S^{RX}\ar[d]&\\
&&0&0&
}
\end{equation}
where all rows and columns are exact,
$p_X$ is a $\U$-deflation of $X$ and $\iota_{RX}$ is a $\T$-inflation of $RX$.
Since $RX, S^{RX}\in{}^{\perp_1}\V$, the second column shows $LRX\in {}^{\perp_1}\V\cap \S^{\perp_1}=\Z$.
Dually, we get $\Im Q_{LR}\subseteq \Z/\I$.

(5) We shall show that, if $\Ext^2_\C(\S,\V)=0$, there exists an isomorphism $\eta:Q_{LR}\xto{\sim}Q_{RL}$.
It is enough to show $X'\in\S^{\perp_1}$ in the diagram (\ref{To_show_LR=RL}).
Applying $\C(\S,-)$ to the middle row in (\ref{To_show_LR=RL}),
we have an exact sequence
$\Ext^1_\C(\S,LRX)\to\Ext^1_\C(\S,X')\to\Ext^2_\C(\S,V_X)$ in which the both sides are zero.
Thus we have $X'\in\S^{\perp_1}$.
Hence $\iota$ is a $\T$-inflation of $X$ and $p$ is a $\U$-deflation of $X'$.
We conclude that the pair $((\S,\S^{\perp_1}),({}^{\perp_1}\V,\V))$ forms a gHTCP.

(B) We shall show that there exists an equivalence $\Z/\I\simeq\C_{\N}$.
Let $\mathbb{S}$ be the class of morphisms whose kernels and cokernels are contained in $\N$.
An equality $\mathbb{W}=\mathbb{S}$ can be checked by the same method as in the proof of Corollary \ref{cor:IYa17}.
Therefore, we identify the Serre localization with $\mathsf{L}_\mathbb{W}:\C\to\C[\mathbb{W}^{-1}]$.
By Propsotion \ref{prop:Loc_sends_W'_to_iso}, we obtain a desired equivalence $\C[\mathbb{V}^{-1}]\simeq\C[\mathbb{W}^{-1}]$.
\end{proof}

Let us add a short argument on an extriangulated structure on $\Z/\I$ in Corollary \ref{cor:abelian_recollement}.
Note that, if both $\T$ and $\U$ are extension-closed, so is $\Z$.
Furthermore, as stated below, the subfactor $\Z/\I$ also has a natural extriagulated structure.

\begin{lemma}\label{lem:extri_str_on_Z/I}
Let $\C$ be an extriangulated category with a gHTCP $\P$.
Suppose that $\T$ and $\U$ are extension-closed and $\mathbb{E}(\S,\T)=0=\mathbb{E}(\U,\V)$ holds.
Then, $\Z$ is extension-closed and $\mathbb{E}(\I,\Z)=0=\mathbb{E}(\Z,\I)$ holds.
Moreover, $\Z/\I$ is an extriangulated category.
\end{lemma}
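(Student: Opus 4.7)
The plan is to verify the three assertions (extension-closedness of $\Z$, the two $\mathbb{E}$-vanishings, and the extriangulated structure on $\Z/\I$) in order, using Proposition \ref{prop:closed_under_operation} twice.

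First I would show $\Z = \T \cap \U$ is extension-closed in $\C$. Given a conflation $Z \to M \to Z'$ with $Z, Z' \in \Z$, the extension-closedness of $\T$ forces $M \in \T$ and likewise $M \in \U$, so $M \in \Z$. Proposition \ref{prop:closed_under_operation} (1) then equips $\Z$ with the extriangulated structure obtained by restricting $\mathbb{E}$.

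Next I would invoke (Hov1), which gives $\S \subseteq \U$ and $\V \subseteq \T$, to observe the inclusion $\I = \S \cap \V \subseteq \T \cap \U = \Z$. The required orthogonalities then follow by bifunctoriality from the hypothesis $\mathbb{E}(\S,\T) = 0 = \mathbb{E}(\U,\V)$ together with $\I \subseteq \S, \V$ and $\Z \subseteq \T, \U$:
$$\mathbb{E}(\I, \Z) \subseteq \mathbb{E}(\S, \T) = 0, \qquad \mathbb{E}(\Z, \I) \subseteq \mathbb{E}(\U, \V) = 0.$$

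The remaining step is to promote these vanishings to $\I \subseteq \proj(\Z) \cap \inj(\Z)$, after which Proposition \ref{prop:closed_under_operation} (2) applies to the extriangulated category $\Z$ and its subcategory $\I$. For $I \in \I$ and a deflation $M \to Z'$ in $\Z$ sitting in a conflation $Z \to M \to Z'$ with $Z \in \Z$, the associated long exact sequence reads
$$\C(I, M) \to \C(I, Z') \to \mathbb{E}(I, Z),$$
and the last term vanishes by the previous paragraph. Hence $\C(I, M) \to \C(I, Z')$ is surjective, and since $\Z$ is a full subcategory of $\C$ this says precisely that $I$ is projective in $\Z$. The dual argument using $\mathbb{E}(\Z, \I) = 0$ gives $I \in \inj(\Z)$. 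Since $\I$ inherits fullness, additivity and closure under isomorphisms from $\S$ and $\V$, Proposition \ref{prop:closed_under_operation} (2) then produces the desired extriangulated structure on $\Z/\I$.

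I expect no serious obstacle here: every claim is a direct consequence of the definitions once the inclusion $\I \subseteq \Z$ is noted. The only point that requires a moment of care is that the projectivity/injectivity test is performed inside $\Z$ with its restricted structure, but since that structure is literally the restriction from $\C$ of $\mathbb{E}$ and of Hom, the long-exact-sequence argument transfers without change.
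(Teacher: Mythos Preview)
Your proposal is correct and follows essentially the same approach as the paper: the paper's proof is a terse version of exactly what you wrote, deducing extension-closedness of $\Z$ from that of $\T$ and $\U$, obtaining the $\mathbb{E}$-vanishings from the hypotheses, concluding that $\I$ consists of projective-injective objects in $\Z$, and then invoking Proposition~\ref{prop:closed_under_operation}(1) and (2). Your added details (the explicit use of (Hov1) to get $\I\subseteq\Z$ and the long-exact-sequence check of projectivity/injectivity) simply unpack what the paper leaves implicit.
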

\begin{proof}
Since $\T$ and $\U$ are extension-closed, so is $\Z$.
By Proposition \ref{prop:closed_under_operation}(1), $\Z$ is an extriangulated cateogry.
The equations directly follow from $\mathbb{E}(\S,\T)=0=\mathbb{E}(\U,\V)$.
Thus, $\I$ is a class of projective-injective objects in $\Z$.
Due to Propositon \ref{prop:closed_under_operation}(2), we have the latter assertion.
\end{proof}

We focus the equivalence $\C_\N\xto{\sim}\Z/\I$ in Corollary \ref{cor:abelian_recollement}.
It is natural to compare the abelian exact structure on $\C_\N$ and the above extriangulated structure on $\Z/\I$.
The following example indicates that they do not coincide.

\begin{example}
We consider the Auslander algebra $B$ of the path algebra $A$ determined by the quiver $[\bullet\to \bullet\to \bullet]$.
The algebra $B$ can be considered as the one defined by the quiver with relations:
$$
\xymatrix@!C=8pt@R=6pt{
&&3\ar[rd]&&\\
&2\ar@{.}[rr]\ar[ru]\ar[rd]&&5\ar[rd]&\\
1\ar[ru]\ar@{.}[rr]&&4\ar@{.}[rr]\ar[ru]&&6
}
$$
where the dotted line stands for the natural mesh relation.
The Auslander-Reiten quiver of the category $\C:=\mod B$ of finite dimensional modules is the following:
\begin{equation*}
\xymatrix@!C=15pt@R=12pt{
&&{\overset{3}{\overset{5}{\scriptstyle 6}}}\ar[rd]&&&\overset{2}{\overset{4\ 3}{\scriptstyle 5}}\ar[rdd]&&&\overset{1}{\overset{2}{\scriptstyle 3}}\ar[rd]&&\\
&\overset{5}{\scriptstyle 6}\ar[ru]\ar[rd]&&\overset{3}{\scriptstyle 5}\ar@{..>}[ll]\ar[rd]&&{\scriptstyle 4}\ar@{..>}[ll]\ar[rd]&&\overset{2}{\scriptstyle 3}\ar@{..>}[ll]\ar[ru]\ar[rd]&&\overset{1}{\scriptstyle 2}\ar@{..>}[ll]\ar[rd]&\\
{\scriptstyle 6}\ar[ru]&&{\scriptstyle 5}\ar@{..>}[ll]\ar[ru]\ar[rd]&&\overset{4\ 3}{\scriptstyle 5}\ar@{..>}[ll]\ar[ru]\ar[rd]\ar[ruu]&&\overset{2}{\scriptstyle 4\ 3}\ar@{..>}[ll]\ar[ru]\ar[rd]&&{\scriptstyle 2}\ar@{..>}[ll]\ar[ru]&&{\scriptstyle 1}\ar@{..>}[ll]\\
&&&\overset{4}{\scriptstyle 5}\ar[ru]&&{\scriptstyle 3}\ar@{..>}[ll]\ar[ru]&&\overset{2}{\scriptstyle 4}\ar@{..>}[ll]\ar[ru]&&&
}
\end{equation*}
where the dotted arrows denote the Auslander-Reiten translation.
We denote by ``$\circ$'' in a quiver the objects belonging to a subcategory and by ``$\cdot$'' the objects do not.
We consider the (injectively) stable Auslander algebra $\overline{B}$ and the associated inclusion $\N:=\mod\overline{B}\hookrightarrow\mod B$.
Then, it is well-known that $\mod\overline{B}$ is a Serre subcategory in $\mod B$:
\begin{equation*}
\xymatrix@!C=6pt@R=6pt{
&&\cdot&&&\cdot&&&\cdot&&\\
&\circ&&\cdot&&\circ&&\cdot&&\cdot&\\
\circ&&\circ&&\cdot&&\cdot&&\cdot&&\cdot\\
&&&\circ&&\cdot&&\cdot&&&
}
\end{equation*}
There exists a cotorsion pair $(\S, \V):=(\mod\overline{B},\mathsf{inj}\overline{B})$ in $\mod\overline{B}$ with $\Ext^2_B(\S,\V)=0$, where $\mathsf{inj}\overline{B}:=\inj(\mod\overline{B})$.
In fact, we have $\add({\scriptstyle 4}\oplus\overset{4}{\scriptstyle 5}\oplus\overset{5}{\scriptstyle 6})=\mathsf{inj}\overline{B}$ and the following calculations:
\begin{eqnarray*}
\Ext^2_\C(\S,{\scriptstyle 4})&\cong&\Ext^1_\C(\S,{\scriptstyle 2})\ =\ 0\\
\Ext^2_\C(\S,\overset{4}{\scriptstyle 5})&\cong&\Ext^1_\C(\S,\overset{2}{\scriptstyle 3})\ =\ 0\\
\Ext^2_\C(\S,\overset{5}{\scriptstyle 6})&\cong&\Ext^1_\C(\S,{\scriptstyle 3})\ =\ 0
\end{eqnarray*}
Corollary \ref{cor:abelian_recollement} guarantees that the pair
$$\P:=((\S,\T),(\U,\V)):=((\mod\overline{B}, (\mod\overline{B})^{\perp_1}),({}^{\perp_1}(\mathsf{inj}\overline{B}) ,\mathsf{inj}\overline{B}))$$
forms a gHTCP in $\mod B$.
We calculate to get that the associated subcategory $\Z=\T\cap\U$ cosisting of $\circ$ and $\bullet$:
\begin{equation*}
\xymatrix@!C=6pt@R=6pt{
&&\circ&&&\circ&&&\circ&&\\
&\bullet&&\cdot&&\bullet&&\cdot&&\circ&\\
\cdot&&\cdot&&\cdot&&\cdot&&\cdot&&\circ\\
&&&\bullet&&\cdot&&\circ&&&
}
\end{equation*}
The symbols $\bullet$ denotes the subcategory $\I$.
Since $\Ext^1_B(\Z,\Z)=0$,
the natural extriangulated structure of $\Z/\I$ is splitting, more precisely it is a splitting exact structure.
On the other hand, known as Auslander's formula \cite{Aus66, Len98}, the Serre localization $\C_\N=\frac{\mod B}{\mod \overline{B}}$ is equivalent to $\mod A$.
Hence, the equivalence $\C_\N\xto{\sim}\Z/\I$ obtained in Corollary \ref{cor:abelian_recollement} is not necessarily exact.
\end{example}

\section{Aspects of the heart construction}\label{sec:heart}
\subsection{Basic properties of the heart}\label{ssec:basic_of_heart}
We recall the definition and some basic properties of the heart.
Throughout this section, we fix a triangulated category $\C$ equipped with a cotorsion pair $(\S,\V)$.
For two classes $\U$ and $\V$ of objects in
$\T$, we denote by $\U *\V$ the class of objects $X$ occurring in a triangle $U\to X\to V\to U[1]$
with $U\in\U$ and $V\in\V$.

\begin{definition}
Let $\C$ be a triangulated category equipped with a cotorsion pair in $\C$, and put $\W:=\S\cap\V$.
We define the following associated subcategories:
$$
\C^-:=\S[-1]*\W;\ \C^+:=\W*\V[1];\ \H:=\C^+\cap\C^-.
$$
The additive quotient $\H/\W$ is called the \textit{heart} of $(\S,\V)$.
\end{definition}

Abe and Nakaoka showed the following assertions.

\begin{lemma}\label{lem:coreflection}\cite[Def. 3.5]{AN12}
For any $X\in\C$, there exists the following commutative diagram
\begin{equation}\label{diag:coreflection}
\xymatrix@R=12pt@C=12pt{
V_X\ar[rr]&&U_X\ar[rr]^\alpha&&X\ar[rr]\ar[dr]&&V_X[1]\\
&&&&&V'_X\ar[ur]&
}
\end{equation}
where $V_X,V'_X\in\V$, $\alpha$ is a left $(\C^-)$-approximation of $X$ and the first row is a triangle.
This triangle is called a \textnormal{coreflection triangle} of $X$.
\end{lemma}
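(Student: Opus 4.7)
The plan is to realize the coreflection triangle as a homotopy pullback of two standard cotorsion-pair triangles. First I will apply $\cocone(\V,\S)=\C$ to $X$, producing a triangle $X\xto{\phi_X}V'_X\to S_1\to X[1]$ with $V'_X\in\V$ and $S_1\in\S$; the object $V'_X$ will serve as the vertex on the diagonal arrow of the target diagram. Next I will apply $\cone(\V,\S)=\C$ to $V'_X$, producing a triangle $V_X\to S_2\xto{\sigma}V'_X\xto{\partial}V_X[1]$ with $V_X\in\V$ and $S_2\in\S$. Because $\V$ is extension-closed and both outer terms $V_X,V'_X$ lie in $\V$, the middle term $S_2$ must also lie in $\V$, giving $S_2\in\S\cap\V=\W$.

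The second main step is to form the homotopy pullback $U_X$ of $\phi_X$ along $\sigma$. The standard behaviour of homotopy pullbacks in a triangulated category produces two triangles simultaneously: a horizontal triangle $V_X\to U_X\xto{\alpha}X\to V_X[1]$ (since the cones of the two parallel horizontal arrows coincide) and a vertical triangle $S_1[-1]\to U_X\to S_2\to S_1$ (since the cones of the two parallel vertical arrows coincide). The first triangle is exactly the top row of the lemma's diagram, and the second displays $U_X$ as an object of $\S[-1]*\W=\C^-$. Moreover, the connecting morphism $X\to V_X[1]$ of the first triangle is identified with the composite $X\xto{\phi_X}V'_X\xto{\partial}V_X[1]$, which is precisely the factorization through $V'_X$ required by the diagram.

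It remains to verify the universal property asserted by the lemma, namely that every $f:U\to X$ with $U\in\C^-$ factors through $\alpha$. The long exact sequence attached to $V_X\to U_X\xto{\alpha}X\to V_X[1]$ shows that $f$ lifts through $\alpha$ if and only if $\partial\circ\phi_X\circ f=0$ in $\C(U,V_X[1])$. Writing $U$ via a defining triangle $S_U[-1]\to U\to W_U\to S_U$ with $S_U\in\S$ and $W_U\in\W$, I will first apply $\C(-,V'_X)$; since $\mathbb{E}(\S,\V)=0$, one has $\C(S_U[-1],V'_X)=0$, so $\phi_X\circ f$ factors as $U\to W_U\xto{h}V'_X$. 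Composing with $\partial$ yields $\partial h\in\C(W_U,V_X[1])$, which vanishes because $W_U\in\S$ and $\mathbb{E}(\S,\V)=0$ again.

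The main delicate point, as I see it, is arranging $S_2\in\W$ rather than merely $S_2\in\S$: this rests on $\V$ being extension-closed and is what makes the vertical triangle of the pullback place $U_X$ inside $\C^-$. The concluding universal-property check is then a two-stage application of $\mathbb{E}(\S,\V)=0$ — once to absorb the $\S[-1]$-part of $U$ into $V'_X$, and once to kill the $\W$-part against $V_X[1]$ — and the defining triangle $\S[-1]*\W$ of $\C^-$ is exactly what makes both invocations legal.
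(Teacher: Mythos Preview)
The paper does not prove this lemma; it is quoted from \cite[Def.~3.5]{AN12} without argument. Your construction is the standard one and is correct: the two successive resolutions force $S_2\in\W$ by extension-closedness of $\V$; the homotopy pullback of $\phi_X$ against $\sigma$ then yields simultaneously the horizontal triangle $V_X\to U_X\xto{\alpha}X\to V_X[1]$ and the vertical triangle $S_1[-1]\to U_X\to S_2\to S_1$ exhibiting $U_X\in\S[-1]*\W=\C^-$; and the connecting map $X\to V_X[1]$ factors through $V'_X$ as you claim, by the usual octahedral identification. Your two-step invocation of $\mathbb{E}(\S,\V)=0$ for the lifting property is also correct: the first kills $\C(S_U[-1],V'_X)$ so that $\phi_X f$ descends to $W_U$, and the second kills $\C(W_U,V_X[1])$.

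One remark on terminology: what you establish is that every $f:U\to X$ with $U\in\C^-$ factors through $\alpha$, i.e.\ $\alpha$ is a \emph{right} $\C^-$-approximation in the paper's own convention (Section~\ref{ssec:def_of_gHTCP}, where a right approximation is a map \emph{into} $X$). The word ``left'' in the lemma statement appears to be a slip; your reading is the one consistent with Corollary~\ref{cor:heart_is_gHTCP}, where the triangle is used to show that $(\C^-,\V)$ is a right cotorsion pair.
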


By a closer look at the above and its dual, we conclude the above left $(\C^-)$-approximation $\alpha$ gives rise to a functor as below.

\begin{lemma}\cite[Lem. 4.2]{AN12}\label{lem:AN}
The canonical inclusion $\C^-/\W \hookrightarrow\C/\W$ has a left adjoint $L$ which restricts to the functor $L:\C^+/\W\to\H/\W$.
Dually, the canonical inclusion $\C^+/\W \hookrightarrow\C/\W$ has a right adjoint $R$ which restricts to the functor $R:\C^-/\W\to\H/\W$.
Furthermore, there exists a natural isomorphism $\eta:LR\xto{\sim}RL$.
\end{lemma}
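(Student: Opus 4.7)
The plan is to build $L$ from the coreflection triangle \eqref{diag:coreflection}, dualise for $R$, and then extract $\eta$ from the universal properties of both.

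I would first set $LX:=U_X$ from \eqref{diag:coreflection}. For a morphism $f\colon X\to Y$ in $\C$, the left $\C^{-}$-approximation property of $\alpha_Y\colon U_Y\to Y$ produces a lift $Lf\colon LX\to LY$ of $f\circ\alpha_X$; any two such lifts differ by a morphism $LX\to U_Y$ factoring through $V_Y\in\V$, which is killed in $\C/\W$ because such morphisms from $\C^{-}$ into $\V$ reduce to zero after moding out by $\W$ (using $\mathbb{E}(\S,\V)=0$ and $\W\subseteq\V$). This gives a well-defined functor $L\colon\C/\W\to\C^{-}/\W$. The adjunction $\C/\W(LX,U)\cong\C/\W(X,U)$ for $U\in\C^{-}$ then drops out of applying $\C(-,U)$ to the triangle $V_X\to LX\to X\to V_X[1]$: both flanking groups vanish in $\C/\W$ when $U\in\C^{-}$, so $\alpha_X^\ast$ becomes bijective.

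Next I would check that $L$ restricts to $\C^{+}/\W\to\H/\W$. For $X\in\C^{+}$, the triangle $V_X\to LX\to X\to V_X[1]$ realises $LX$ as an extension of $X\in\C^{+}$ by $V_X\in\V\subseteq\C^{+}$; closure of $\C^{+}$ under this operation (verified by the octahedron combining the $\W\ast\V[1]$-decomposition of $X$ with $V_X\in\V$) forces $LX\in\C^{+}\cap\C^{-}=\H$. The construction of $R$ and its restriction $R\colon\C^{-}/\W\to\H/\W$ are entirely dual, using the second half of \eqref{diag:coreflection}.

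Finally, for $\eta\colon LR\xto{\sim}RL$, both composites land in $\H/\W$ by the previous step. Given $X\in\C$, I would use the structural morphisms $\alpha_X\colon LX\to X$ and (dually) $\beta_X\colon X\to RX$, together with the adjunctions $L\dashv\mathsf{inc}$ and $\mathsf{inc}\dashv R$, to produce canonical comparison morphisms $LRX\to RLX$ and $RLX\to LRX$ in $\H/\W$. I expect the main technical obstacle to be verifying that these two candidates are mutually inverse and natural in $X$: this amounts to placing the coreflection triangle of $X$ and its dual inside a single octahedron and tracking how $L$ and $R$ act on each arm — a computation modelled on the standard comparison of truncation functors in a $t$-structure \cite{BBD}, where the vanishing of morphisms between the $\V$- and $\S[-1]$-parts modulo $\W$ collapses the octahedron onto the desired isomorphism.
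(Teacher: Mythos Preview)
The paper does not supply its own proof of this lemma; it is quoted from \cite[Lem.~4.2]{AN12} as part of the background on the heart construction, so there is no in-house argument to compare against. Your outline follows the route taken there: realise the adjoint via the (co)reflection triangle of Lemma~\ref{lem:coreflection}, establish well-definedness modulo $\W$ using $\C(\S[-1],\V)=\mathbb{E}(\S,\V)=0$ so that any two lifts differ by a map factoring through $\W$, dualise for $R$, and then compare $LR$ and $RL$ via an octahedral diagram.

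Two places in your sketch would need tightening before it becomes a proof. First, the long-exact-sequence justification of the adjunction conflates $\C$ with $\C/\W$: the sequence obtained from the triangle $V_X\to LX\to X\to V_X[1]$ is exact only in $\C$, and it does not descend to the ideal quotient, so ``both flanking groups vanish in $\C/\W$'' does not by itself yield bijectivity of $\alpha_X^\ast$ there. The correct argument verifies the universal property of $\alpha$ directly in $\C/\W$ from its approximation property together with the vanishing you already identified. Second, the restriction $L|_{\C^+}\colon\C^+/\W\to\H/\W$ is more delicate than an abstract extension-closure of $\C^+$ under $\V$: in \cite{AN12} this step uses the explicit two-stage construction of the coreflection (encoded in the factorization $X\to V'_X\to V_X[1]$ recorded in Lemma~\ref{lem:coreflection}), and a generic octahedron applied to $V_X\to LX\to X$ with $X\in\C^+$ does not by itself force the intermediate object into $\W$ rather than merely into $\V$. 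With those two points addressed, your plan matches the original proof.
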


The following is their main result.

\begin{theorem}\cite[Thm. 6.4]{Nak11}\cite[Thm. 5.7]{AN12}\label{thm:AN}
The heart $\H/\W$ is abelian.
Moreover, the functor $\mathsf{coh}:=LR\pi:\C\to\H/\W$ is cohomological.
\end{theorem}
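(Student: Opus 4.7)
The plan is to establish that $\H/\W$ is abelian by constructing kernels and cokernels directly from distinguished triangles in $\C$, using the adjoints $L, R$ of Lemma~\ref{lem:AN}, and then to derive cohomologicality of $\mathsf{coh}=LR\pi$ from the same construction. The decisive technical input throughout is the natural isomorphism $\eta\colon LR\xto{\sim}RL$, combined with the octahedral axiom.

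First I would prove that $\H/\W$ is preabelian by producing an explicit cokernel. Given $\overline{f}\colon X\to Y$ in $\H/\W$ represented by $f\colon X\to Y$ in $\H$, complete $f$ to a triangle $X\xto{f}Y\xto{g}Z\to X[1]$ in $\C$, and declare the cokernel to be $\mathsf{coh}(Z)=LR\pi(Z)$, with structure morphism obtained by composing $\pi(g)$ with the natural map $\pi(Z)\to LR\pi(Z)$ furnished by Lemma~\ref{lem:AN}. The universal property is checked as follows: for any $\overline{k}\colon Y\to H'$ in $\H/\W$ (with $H'\in\H$) annihilating $\overline{f}$, the representative $k\colon Y\to H'$ must factor through $g$ modulo $\W$, since the connecting morphism $X[-1]\to H'$ lies in the ideal generated by $\W$---this follows from $X\in\C^-$, $H'\in\C^+$ and the orthogonality $\C(\S,\V[1])=0$ built into the cotorsion pair. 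The resulting morphism $Z\to H'$ then factors uniquely through $LR\pi(Z)$ by the adjunctions of Lemma~\ref{lem:AN}. Kernels are constructed dually from the rotation $Z[-1]\to X\xto{f}Y$.

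To upgrade preabelianness to abelianness, I would verify that the canonical comparison $\mathrm{Coim}(\overline{f})\to\mathrm{Im}(\overline{f})$ is an isomorphism in $\H/\W$. The image is the kernel of the cokernel $\pi(Y)\to\mathsf{coh}(Z)$ from above, while the coimage is the cokernel of the kernel $\mathsf{coh}(Z[-1])\to\pi(X)$. Applying the octahedral axiom to the factorization of $f$ through the cone of its kernel produces a $3\times 3$ grid whose outer entries realize these two objects, and the isomorphism $\eta\colon LR\cong RL$ is precisely what forces the comparison between them to be invertible. Cohomologicality of $\mathsf{coh}$ then follows swiftly: for any triangle $X\to Y\to Z\to X[1]$ in $\C$, exactness of $\mathsf{coh}(X)\to\mathsf{coh}(Y)\to\mathsf{coh}(Z)$ at $\mathsf{coh}(Y)$ reduces, after replacing $X,Y,Z$ by their coreflections via Lemma~\ref{lem:coreflection}, to identifying both the image of the first map and the kernel of the second with the same triangle-theoretic object produced by the cokernel construction; rotating yields the full long exact sequence.

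The hard part will be the image-coimage comparison in the abelianness step. It requires reconciling two a priori distinct constructions, one arising on the $\C^-$-side via $R$ and one on the $\C^+$-side via $L$; the isomorphism $\eta\colon LR\cong RL$ of Lemma~\ref{lem:AN} is the decisive bridge. Turning the abstract existence of $\eta$ into the concrete invertibility of the image-coimage comparison will demand a careful octahedral diagram chase, and this is where most of the technical content of the proof lies.
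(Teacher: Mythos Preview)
The paper does not supply its own proof of this statement: Theorem~\ref{thm:AN} is quoted from \cite{Nak11} and \cite{AN12} as background, with no argument given. So there is no ``paper's proof'' to compare your proposal against.

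That said, your outline is essentially the strategy of the original sources. Nakaoka's proof in \cite{Nak11} does construct cokernels in $\H/\W$ by taking cones in $\C$ and then applying the reflection/coreflection functors, and the abelianness is obtained by a coimage--image comparison controlled by the interplay of $L$ and $R$; the cohomologicality in \cite{AN12} is then deduced from this description of kernels and cokernels. So your high-level plan matches the literature.

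One point of caution: in your cokernel argument you assert that if $\overline{k}\overline{f}=0$ in $\H/\W$ then $k$ factors through $g$ ``modulo $\W$'', justified by the single orthogonality $\C(\S,\V[1])=0$. This step is more delicate than you indicate. The obstruction is not a single morphism $X[-1]\to H'$ but rather the difference between $kf$ and a map factoring through $\W$; one needs to replace $f$ by a morphism with the same image in $\H/\W$ for which the composite is literally zero, and this uses the coreflection triangle of Lemma~\ref{lem:coreflection} rather than bare orthogonality. Similarly, your image--coimage step correctly identifies $\eta\colon LR\cong RL$ as the key input, but the ``$3\times3$ grid'' you invoke is not produced by a single application of the octahedral axiom; in \cite{Nak11} this comparison occupies several lemmas. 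Your sketch is accurate at the level of strategy, but the gaps you flag as ``the hard part'' are genuinely where the work lies, and filling them is nontrivial.
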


Note that the cotorsion class $\S$ admits an extriangulated structure.
For later use, we also recall some consequences of the assumption that $\S$ has enough projectives.
In this case, the heart $\H/\W$ and the cohomological functor $\mathsf{coh}$ have nicer descriptions.

\begin{lemma}\cite[Cor. 3.8]{LN19}\label{lem:kernel_of_cohom}
The kernel of cohomological functor $\mathsf{coh}:\C\to\H/\W$ is $\add\S*\V$, the additive closure of $\S*\V$.
In particular, $\add\S*\V$ is extension-closed in $\C$.
\end{lemma}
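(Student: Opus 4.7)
I would prove the identity $\Ker(\mathsf{coh})=\add\S*\V$ in two inclusions, from which the extension-closedness claim follows automatically.

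The starting observation is that $\Ker(\mathsf{coh})$ is closed under direct summands (since $\mathsf{coh}$ is additive and the target $\H/\W$ is abelian by Theorem \ref{thm:AN}) and closed under extensions in triangles of $\C$: given a triangle $A\to B\to C\to A[1]$ with $\mathsf{coh}(A)=\mathsf{coh}(C)=0$, the exact sequence $\mathsf{coh}(A)\to\mathsf{coh}(B)\to\mathsf{coh}(C)$ in the abelian $\H/\W$ forces $\mathsf{coh}(B)=0$.

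For the inclusion $\add\S*\V\subseteq\Ker(\mathsf{coh})$, by the closure properties just noted it suffices to show $\S\cup\V\subseteq\Ker(\mathsf{coh})$. Here I would use the explicit description $\mathsf{coh}=LR\pi\cong RL\pi$ from Lemmas \ref{lem:coreflection} and \ref{lem:AN} together with the fact that objects of $\W\subseteq\H$ become zero in $\H/\W$. For $V\in\V$, applying $\mathsf{coh}$ to the cotorsion-pair coresolution triangle $V\to V^V\to S^V\to V[1]$ (with $V^V\in\V,\ S^V\in\S$) and chasing the resulting long exact sequence, combined with a direct identification of the coreflection/reflection data via $\eta$, yields $\mathsf{coh}(V)=0$; the case $S\in\S$ is dual.

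For the reverse inclusion $\Ker(\mathsf{coh})\subseteq\add\S*\V$, take $X$ with $\mathsf{coh}(X)=0$. The coreflection triangle of Lemma \ref{lem:coreflection} gives $V_X\to U_X\to X\to V_X[1]$ with $U_X\in\C^-$ and $V_X\in\V$, and the dual reflection applied to $U_X$ produces a further triangle $U_X\to H_X\to S_X\to U_X[1]$ with $H_X\in\H$ representing $\mathsf{coh}(X)$ in $\H/\W$ and $S_X\in\S$. The hypothesis then forces $H_X\in\W$. I would apply the octahedral axiom to the composition $V_X\to U_X\to H_X$, so as to relate $\cone(V_X\to U_X)=X$ to $\cone(V_X\to H_X)$ and $\cone(U_X\to H_X)=S_X$, and then clean the resulting triangle using that $\V$ is extension-closed together with $\W\subseteq\V$. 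This exhibits $X$ as an element of $\S*\V$, and hence of $\add\S*\V$.

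The ``in particular'' claim is then immediate: since $\add\S*\V=\Ker(\mathsf{coh})$, it is extension-closed by the opening observation. The main obstacle is the reverse inclusion: while the octahedron encodes the right data, its third vertex $\cone(V_X\to H_X)$ lands a priori in $\V*\V[1]$, and turning this into a clean $\S*\V$-triangle for $X$ requires invoking the cotorsion-pair axioms (extension-closure of $\S$ and $\V$, and the orthogonality $\mathbb{E}(\S,\V)=0$) to absorb the $\V[1]$-term. A secondary subtlety is the direct computation $\mathsf{coh}(V)=0$ for $V\in\V$ via the adjoints, since $\V$ is not in general contained in $\C^-$ and the corresponding coreflection must be traced nontrivially through the cotorsion-pair triangle.
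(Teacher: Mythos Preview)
The paper does not give its own proof of this lemma: it is stated with the citation \cite[Cor.~3.8]{LN19} and no argument follows (the next item in the paper is Proposition~\ref{prop:enough_projective_heart}). So there is no in-paper proof to compare your attempt against; the authors simply import the result from Liu--Nakaoka.

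That said, a brief comment on your plan. Your treatment of the forward inclusion and of the ``in particular'' clause is fine and is essentially the standard argument. For the vanishing $\mathsf{coh}(\V)=0$ you can avoid chasing long exact sequences: for $V\in\V$ the identity $V\to V$ is already a right $\C^-$-approximation only modulo $\W$, but more directly one checks $R\pi(V)\in\W/\W$ using that the coreflection triangle for $V$ may be taken with $U_V\in\W$ (since $\C(\S[-1],V)=\mathbb{E}(\S,V)=0$ forces any $\C^-$-cover of $V$ to lie in $\W$); dually for $\S$. For the reverse inclusion your octahedral set-up is the right one, but the obstacle you flag is real and your sketch does not yet resolve it: the cone of $V_X\to H_X$ lies only in $\W*\V[1]$, and the orthogonality $\mathbb{E}(\S,\V)=0$ alone does not collapse this to $\V$. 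In Liu--Nakaoka's argument this step is handled by an additional use of the approximation property of the (co)reflection (not just the bare triangles), which lets one replace the problematic cone by an object of $\V$ up to a summand in $\W$; that is the missing ingredient in your outline.
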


The subcategory $\S*\V$ will play important roles in the rest.

\begin{proposition}\cite[Thm. 4.10, Prop. 4.15]{LN19}\label{prop:enough_projective_heart}
The following are equivalent:
\begin{enumerate}
\item[\textnormal{(i)}] $\S$ has enough projectives;
\item[\textnormal{(ii)}] $(\proj(\S), \add(\S*\V))$ forms a cotorsion pair of $\C$.
\end{enumerate}
Under the above equivalent conditions,
there exists an equivalence $\Psi:\H/\W\xto{\sim}\mod(\proj(\S[-1]))$ which makes the following diagram commutative up to isomorphisms
$$
\xymatrix{
\C\ar[rd]_{\Hom(\proj(\S[-1]), -)\ \ }\ar[r]^{\mathsf{coh}}&\H/\W\ar[d]^\Psi\\
&\mod(\proj(\S[-1]))
}
$$
\end{proposition}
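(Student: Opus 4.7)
The plan is to establish (i) $\Leftrightarrow$ (ii) separately, and then under either condition construct $\Psi$ by realizing $\H/\W$ as finitely presented modules over a generating family of projectives. For (ii) $\Rightarrow$ (i), I would pick $X \in \S$ and apply the assumed cotorsion pair in $\C$ to obtain a triangle $M \to P \to X \to M[1]$ with $P \in \proj(\S)$ and $M \in \add(\S*\V)$. The task is to arrange this as an $\S$-conflation: combining $P, X \in \S$ with the defining vanishing $\mathbb{E}(\S, \V) = 0$ of the cotorsion pair $(\S,\V)$ and the shape of objects in $\add(\S*\V)$, one shows $M \in \S$, yielding a projective cover of $X$ inside the extriangulated subcategory $\S$.

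For the converse (i) $\Rightarrow$ (ii), given $X \in \C$, I would first resolve $X$ via $(\S,\V)$ to obtain $V \to S \to X \to V[1]$ with $S \in \S$ and $V \in \V$, then use enough projectives in $\S$ to find a conflation $M \to P \to S$ in $\S$ with $P \in \proj(\S)$ and $M \in \S$. An octahedral completion yields a triangle $N \to P \to X \to N[1]$ where $N$ sits in a triangle $V \to N \to M \to V[1]$, so $N \in \S * \V \subseteq \add(\S*\V)$. The orthogonality $\mathbb{E}(\proj(\S), \add(\S*\V)) = 0$ decomposes, along any triangle $S' \to E \to V' \to S'[1]$ representing an object $E$ of $\S*\V$, into two controllable vanishings: projectivity of $P$ within $\S$ for the $S'$-part, and the cotorsion-pair identity $\mathbb{E}(\S,\V) = 0$ for the $V'$-part. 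A routine direct-summand argument then handles the passage from $\S*\V$ to $\add(\S*\V)$.

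For the equivalence $\Psi$, I would first verify that $\mathsf{coh}(P) \in \H/\W$ is a projective object for each $P \in \proj(\U[-1])$, using that $\mathsf{coh}$ is cohomological (Theorem \ref{thm:AN}) together with the Ext-vanishing just established. Since $\mathsf{coh}$ annihilates $\add(\S*\V)$ by Lemma \ref{lem:kernel_of_cohom}, the Yoneda-type functor $X \mapsto \C(\proj(\U[-1]), X)$ descends through $\H/\W$ and lands in $\mod(\proj(\U[-1]))$, giving a candidate $\Psi$ making the required triangle commute by construction. One then checks that $\{\mathsf{coh}(P) \mid P \in \proj(\U[-1])\}$ forms a generating family of projectives in $\H/\W$ affording every object a two-term projective presentation, whence a standard Freyd--Mitchell style argument yields that $\Psi$ is an equivalence.

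The main obstacle I anticipate lies in (i) $\Rightarrow$ (ii): projectivity of $P$ inside $\S$ is formulated relative to $\S$-conflations only, while the required cotorsion-pair orthogonality involves $\mathbb{E}$-extensions reaching into $\add(\S*\V)$, which leaves $\S$ in a tightly controlled way through $\V$. Carefully pasting triangles along the $\S/\V$ decomposition --- and respecting the $\add$-closure on the right-hand side throughout --- is the technical heart of the argument; a secondary subtlety is that the naive octahedral construction produces membership only in $\S * \V$, so the eventual use of direct summands to land in $\add(\S*\V)$ must be reconciled with the Ext-vanishing computations.
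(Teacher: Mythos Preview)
The paper does not supply its own proof of this proposition: it is quoted from \cite[Thm.~4.10, Prop.~4.15]{LN19} and stated without argument, so there is no internal proof to compare your sketch against. (Note too that the symbol $\U$ is not defined in Section~\ref{ssec:basic_of_heart}; the surrounding prose speaks of $\S$ having enough projectives, so the statement as printed carries either a typo or an implicit identification imported from \cite{LN19}.)

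On the substance of your outline: the direction (i) $\Rightarrow$ (ii) is essentially sound, though the triangle you record for $N$ has its outer terms swapped --- the octahedron on $P \to S \to X$ yields $M \to N \to V \to M[1]$, which places $N \in \S*\V$ directly. The direction (ii) $\Rightarrow$ (i) is where your sketch is thin: from $M \to P \to X \to M[1]$ with $P,X \in \S$ and $M \in \add(\S*\V)$ one does \emph{not} immediately obtain $M \in {}^{\perp_1}\V = \S$, since the long exact sequence only gives an embedding $\Hom(M,\V[1]) \hookrightarrow \Hom(X,\V[2])$, and the target need not vanish. Indeed, taking the degenerate cotorsion pair $(\S,\V)=(\C,0)$ one finds $(\proj(\S),\add(\S*\V))=(0,\C)$, which \emph{is} a cotorsion pair, while $\S=\C$ has enough projectives only when $\C=0$; so the equivalence as literally transcribed cannot hold without further input from \cite{LN19}, and your argument cannot close the gap as written. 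Your plan for $\Psi$ via representable functors and projective presentations is the standard one and should go through once the cotorsion-pair side is properly in place.
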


Note that this explains why the heart $\H/\W$ is abelian and the associated functor $\mathsf{coh}$ is cohomological.

\subsection{From cotorsion pair to gHTCP}\label{ssec:from_CP_to_gHTCP}
We shall show that the heart in triangulated categories can be obtained as a gHTCP localization.
We consider the pair $\P:=((\S,\C^+),(\C^-,\V))$.
Our result shows that $\P$ is a gHTCP and the heart construction is nothing but the Gabriel-Zisman localization of $\C$ with respect to the class $\mathbb{V}$ associated to $\P$.

\begin{corollary}\label{cor:heart_is_gHTCP}
The pair $\P$ forms a gHTCP. 
Moreover, the following hold.
\begin{enumerate}
\item[\textnormal{(1)}] We have $\Z=\H, \I=\W$ and $\Z/\I=\H/\W$.
\item[\textnormal{(2)}] There exists a class $\mathbb{V}$ of morphisms in $\C$ such that the associated localization $\mathsf{L}_\mathbb{V}:\C\to\C[\mathbb{V}^{-1}]$ induces an equivalence $\Phi:\H/\W\xto{\sim}\C[\mathbb{V}^{-1}]$.
\item[\textnormal{(3)}] We have $\mathsf{L}_\mathbb{V}\cong\Phi\circ\mathsf{coh}$. 
\end{enumerate}
\end{corollary}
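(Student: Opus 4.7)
The plan is to identify $\P := ((\S, \C^+), (\C^-, \V))$ as a gHTCP and then read off the corollary from our main Theorem \ref{thm:RL=LR_implies_universality}; all of the non-formal input is supplied by the Abe--Nakaoka machinery recalled in Subsection \ref{ssec:basic_of_heart}. First I would verify the defining structure of $\P$. The coreflection triangle of Lemma \ref{lem:coreflection} immediately exhibits $(\C^-, \V)$ as a right cotorsion pair in $\C$: the triangle $V_X \to U_X \to X \to V_X[1]$ is a conflation with $U_X \in \C^-$, $V_X \in \V$, and $U_X \to X$ is a right $\C^-$-approximation in the sense of Section \ref{sec:gHTCP}. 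Dualising this argument gives that $(\S, \C^+)$ is a left cotorsion pair. The inclusions $\S \subseteq \C^-$ and $\V \subseteq \C^+$ required by (Hov1), and the equality $\S \cap \C^+ = \W = \C^- \cap \V$ required by (Hov2), are standard facts about cotorsion pairs obtained by a direct inspection of triangles; in particular $\I = \S \cap \V = \W$ and $\Z = \C^- \cap \C^+ = \H$, so $\Z/\I = \H/\W$, which already takes care of part (1).

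The crucial datum — a natural isomorphism $\eta: Q_{LR} \xto{\sim} Q_{RL}$ — is delivered verbatim by Lemma \ref{lem:AN}. Indeed, the functors $L$ and $R$ of that lemma are precisely the left and right adjoints appearing in the definition of $Q_{LR} = LR\pi$ and $Q_{RL} = RL\pi$ for our $\P$ (since $\U = \C^-$ and $\T = \C^+$), and the natural isomorphism $LR \xto{\sim} RL$ established in \cite{AN12} gives exactly the $\eta$ required. Hence $\P$ is a gHTCP, and Theorem \ref{thm:RL=LR_implies_universality} immediately yields the equivalence $\Phi: \H/\W \xto{\sim} \C[\mathbb{V}^{-1}]$ of part (2). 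For part (3), observe that Theorem \ref{thm:AN} amounts to the assertion $\mathsf{coh} = LR\pi = Q_{LR}$; hence the commutativity $\mathsf{L}_\mathbb{V} \cong \Phi \circ Q_{LR}$ furnished by Proposition \ref{prop:key} specialises to $\mathsf{L}_\mathbb{V} \cong \Phi \circ \mathsf{coh}$.

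No step presents a genuine obstacle once this dictionary is set up: the heavy lifting — existence of the adjoints $L$ and $R$, their commutation $LR \cong RL$, and the cohomologicality of $\mathsf{coh} = LR\pi$ — was already done by Abe--Nakaoka, and the task here is essentially to realign their notation with the gHTCP framework. The most delicate bookkeeping is in checking (Hov1) and in confirming that the left/right approximation properties demanded by our definition of a (one-sided) cotorsion pair agree with those produced by the coreflection triangles; both reductions follow by explicit manipulation of the triangles furnished by the cotorsion pair axioms.
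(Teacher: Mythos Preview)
Your approach is essentially the same as the paper's: verify that $\P=((\S,\C^+),(\C^-,\V))$ satisfies the axioms of a gHTCP, with the key isomorphism $\eta:Q_{LR}\xto{\sim}Q_{RL}$ supplied by Lemma~\ref{lem:AN}, and then read off the conclusions from Theorem~\ref{thm:RL=LR_implies_universality} and Proposition~\ref{prop:key}. The paper proceeds in exactly this way, and your identification $\mathsf{coh}=Q_{LR}$ for part~(3) is also what the paper has in mind.

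There is, however, one non-trivial verification you have passed over. The definition of a left/right cotorsion pair in Section~\ref{ssec:def_of_gHTCP} requires both constituent subcategories to be closed under direct summands. For $\S$ and $\V$ this is part of the hypothesis, but for $\C^-=\S[-1]*\W$ and $\C^+=\W*\V[1]$ it is not automatic: extension-closed classes (and $*$-products in particular) need not be closed under summands. The paper in fact devotes the longest portion of its proof to establishing this for $\C^-$, via an explicit argument that takes a summand $T_i$ of $T\in\C^-$, forms the cone of the composite $S[-1]\to T\to T_i$, resolves that cone by $(\S,\V)$, and then uses the vanishing $\C(\I,\V[1])=0$ and $\C(\S,\V[1])=0$ to force the resulting $V$ into $\W$. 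Your remark that ``the most delicate bookkeeping is in checking (Hov1)'' misidentifies where the work lies; (Hov1) and (Hov2) are indeed short, but the summand-closure of $\C^\pm$ is the step that genuinely requires a careful triangle argument and should not be waved through.
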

\begin{proof}
We shall verify that the pair $\P$ forms a gHTCP.
We firstly show that $\C^-$ is closed under direct summands.
Let $T$ be an object in $\C^-$ together with a decomposition $T\cong T_1\oplus T_2$.
By definition,
there exists a triangle $S[-1]\xto{a}T\xto{b}I\to S$ with $S\in\S$ and $I\in I$.
A canonical projection $p_i:T\to T_i$ induces a triangle $S[-1]\xto{p_ia}T_i\xto{d} X_i\to S$ and a morphism $c:I\to X_i$ with $cb=dp_i$ for $i=1,2$.
Next, we resolve $X_i$ by $(\S,\V)$ to get a triangle $S'[-1]\to X_i\xto{e} V\to S'$ with $S'\in\S$ and $V\in\V$.
We obtain the following commutative diagram from the above triangles:
$$
\xymatrix@R=16pt{
&&S'[-1]\ar@{=}[r]\ar[d]&S'[-1]\ar[d]\\
S[-1]\ar[d]\ar[r]^{p_ia}&T_i\ar@{=}[d]\ar[r]^d&X_i\ar[d]^e\ar[r]&S\ar[d]\\
Y[-1]\ar[r]&T_i\ar[r]&V\ar[r]\ar[d]&Y\ar[d]\\
&&S'\ar@{=}[r]&S'
}
$$
where all rows and columns are triangles.
Note that $Y\in\S$.
If $V\in\I$, this forces $T_i\in\C^-$.
It suffices to check $V\in\S$.
To this end, we consider a morphism $f:V\to V'[1]\in\V[1]$.
The composition $fec:I\to V'[1]$ is zero because of $(\I,\V[1])=0$.
Thus we have $fecb=fedp_i=0$ and hence $fed=0$ which shows $fe$ factors through $S\in\S$.
Moreover, $(\S,\V[1])=0$ forces $fe=0$ and $f$ factors through $S'\in\S$.
We thus conclude $f=0$ and $V\in\S$.
Dually we can confirm that $\C^+$ is also closed under direct summands.

Thanks to Lemma \ref{lem:coreflection} and the dual, it follows that $(\S,\C^+)$ and $(\C^-,\V)$ are a left cotorsion pair and a right one, respectively.

(Hov1): Let $S$ be an object in $\S$.
Since $(\C^-,\V)$ is a right cotorsion pair, we have a conflation $V_S\to S^-\to S$ with $S^-\in\C^-$ and $V_S\in\V$.
The condition $\Ext^1_\C(\S,\V)=0$ shows that $S$ is a direct summand of $S^-$.
Dually we have $\C^+\supseteq \V$.

(Hov2): Consider $X\in\S\cap\C^+$ together with a conflation $V_X\to I_X\to X$ with $V_X\in\V$ and $I_X\in\I$.
Since $\Ext^1_\C(X,V_X)=0$, $X$ belongs to $\I$.
Similarly, we have $\I=\C^-\cap\V$.

Due to Lemma \ref{lem:AN}, we conclude that $\P$ is a gHTCP.
The remaining assertions are obvious.
\end{proof}

We can obtain from Corollary \ref{cor:heart_is_gHTCP} the following picture which is a special case of the right one in (\ref{picture_of_this_article}).
\begin{equation*}\label{picture_of_heart}
\xymatrix@!C=36pt@R=32pt{
&(\S,\V)\ar@{~>}[rd]^{\textnormal{Corollary \ref{cor:heart_is_gHTCP}}}\ar@{~>}[ld]_{\textnormal{Abe-Nakaoka}}&\\
\H/\W\ar@{->}[rr]^\sim&&\C[\mathbb{V}^{-1}]
}
\end{equation*}

Combining Corollary \ref{cor:heart_is_gHTCP} and Proposition \ref{prop:enough_projective_heart}, we recover the following result which can be deduced from \cite[Thm 4.4]{BM13a} and \cite[Prop 6.2(3)]{IY08}.

\begin{corollary}
Let $k$ be a field.
We consider a $k$-linear $\Hom$-finite Krull-Schmidt triangulated
category $\C$ with a rigid object $T$.
\begin{enumerate}
\item[\textnormal{(1)}] 
There exists a class $\mathbb{V}$ of morphisms and an equivalence $\C[\mathbb{V}^{-1}]\xto{\sim}\mod\End_\C(T)^{\op}$.
\item[\textnormal{(2)}] 
There exists an equivalence $\frac{\add T[-1]*\add T}{\add T}\xto{\sim}\mod\End_\C(T)$.
\end{enumerate}
In particular, we have an equivalence $\Phi:\frac{\add T[-1]*\add T}{\add T}\xto{\sim}\C[\mathbb{V}^{-1}]$.
\end{corollary}
\begin{proof}
Due to the $\Hom$-finiteness of $\C$ and the rigidity of $T$, we have a cotorsion pair $(\add T,(\add T)^{\perp_1})$ of $\C$.
Applying Corollary \ref{cor:heart_is_gHTCP} to the cotorsion pair, we have a gHTCP $((\add T,\C),(\add T[-1]*\add T,(\add T)^{\perp_1}))$ and a desired equivalence $\Phi$.
\end{proof}

\subsection{A factorization through a preabelian category}\label{ssec:factorization}
Keeping in mind Buan-Marsh's localizations, we shall subsequently show that the above Gabriel-Zisman localization $\mathsf{L}_\mathbb{V}:\C\to\H/\W$ is not far from one admitting calculus of left and right fractions.
The following our main result is formulated under functorial finiteness of $\S*\V$.

\begin{theorem}\label{thm:generalized_BM2}
Let $\C$ be a triangulated
category equipped with a cotorsion pair $(\S,\V)$.
Assume that $\S*\V$ is functorially finite in $\C$.
Then,
\begin{enumerate}
\item[\textnormal{(1)}] the additive quotient $\C/(\S*\V)$ is preabelian;
\item[\textnormal{(2)}] the class $\mathbb{R}$ of regular morphisms in $\C/(\S*\V)$ admits a calculus of left and right fractions;
\item [\textnormal{(3)}] there exists the following commutative diagram up to isomorphism and a factorization of $\mathsf{L}_\mathbb{V}\cong \mathsf{L}_\mathbb{R}\circ\varpi$.
\[
\xymatrix{
&\H/\W\ar[rd]^{\Phi}&\\
\C\ar[ru]^{\mathsf{coh}}\ar[rd]_{\varpi}\ar[rr]^{\mathsf{L}_\mathbb{V}}&&\C[\mathbb{V}^{-1}],\\
&\C/(\S*\V)\ar[ru]_{\mathsf{L}_\mathbb{R}}&
}
\]
where $\varpi$ is the natural additive quotient functor;
\item [\textnormal{(4)}] the localization functor $\mathsf{L}_\mathbb{R}:\C/(\S*\V)\to(\C/(\S*\V))[\mathbb{R}^{-1}]$ induces an equivalence
$$(\C/(\S*\V))[\mathbb{R}^{-1}]\xto{\sim} \H/\W .$$
\end{enumerate}
\end{theorem}

The rest of this section will be occupied to prove the theorem.
Our method is similar to Buan-Marsh's one which strongly depends on Rump's localization theory of preabelian category.
To show Theorem \ref{thm:generalized_BM2}, we firstly show that $\C/(\S*\V)$ is an integral preabelian category.
Put $\K:=\add\S*\V$ and identify $\C/(\S*\V)$ with $\C/\K$.

\begin{proposition}\label{prop:C/K_is_preabelian}
The additive quotient $\C/(\S*\V)$ is preabelian.
\end{proposition}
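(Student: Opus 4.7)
\emph{Proof plan.} The goal is to construct, for an arbitrary morphism $f\colon X\to Y$ in $\C$, both a kernel and a cokernel of $\pi(f)$ in $\C/\K$, where $\K:=\add(\S*\V)$; since $\C/\K$ is automatically additive, this is exactly what is needed. I will describe the cokernel in detail and mention that the kernel is dual. The construction leans on two ingredients that are already available: the triangulated structure of $\C$, and the functorial finiteness of $\K$ together with its extension-closedness recorded in Lemma~\ref{lem:kernel_of_cohom}.

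The first step is to use covariant finiteness to choose a left $\K$-approximation $\alpha\colon X\to K_X$ with $K_X\in\K$, and to complete $\left(\begin{smallmatrix}f\\ \alpha\end{smallmatrix}\right)\colon X\to Y\oplus K_X$ to a distinguished triangle
$$X\xto{\left(\begin{smallmatrix}f\\ \alpha\end{smallmatrix}\right)} Y\oplus K_X \xto{(p\ r)} C \to X[1].$$
Since $pf=-r\alpha$ factors through $K_X\in\K$, we have $\pi(p)\pi(f)=0$, so $\pi(p)\colon\pi(Y)\to\pi(C)$ is the candidate for the cokernel of $\pi(f)$.

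Next, I verify the existence part of the universal property. If $q\colon Y\to W$ satisfies $\pi(q)\pi(f)=0$, then $qf$ factors as $X\xto{k}K\xto{\eta}W$ with $K\in\K$, and the left $\K$-approximation property yields $k=k'\alpha$ for some $k'\colon K_X\to K$. The morphism $(q,\ -\eta k')\colon Y\oplus K_X\to W$ then annihilates $\left(\begin{smallmatrix}f\\ \alpha\end{smallmatrix}\right)$, so the triangle axiom produces $s\colon C\to W$ with $q=sp$, hence $\pi(q)=\pi(s)\pi(p)$. Dually, to build the kernel of $\pi(f)$ I take a right $\K$-approximation $\beta\colon K_Y\to Y$, complete $(f,\ -\beta)\colon X\oplus K_Y\to Y$ to a triangle $N\to X\oplus K_Y\xto{(f,-\beta)}Y\to N[1]$, and use the composite $N\to X\oplus K_Y\to X$ as the kernel morphism.

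The principal obstacle will be the uniqueness half of the universal property, equivalently the fact that $\pi(p)$ is epimorphic in $\C/\K$: one must show that whenever $s\colon C\to W$ has $sp$ factoring through $\K$, the morphism $s$ itself factors through $\K$. I plan to attack this by rotating the triangle above to $Y\oplus K_X\xto{(p\ r)} C\to X[1]\to (Y\oplus K_X)[1]$, applying $\C(-,W)$ and then combining the resulting exact sequence with a left $\K$-approximation of $C$ and the extension-closedness of $\K$ from Lemma~\ref{lem:kernel_of_cohom}; the dual argument will yield monomorphicity for the constructed kernel morphism. Once this is in place, the preabelian property follows.
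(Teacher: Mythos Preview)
Your existence argument is fine and the overall shape is close to the paper's, but there is a genuine gap in the uniqueness step, and the ingredients you list do not close it.

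Here is the issue. From the octahedral axiom applied to $Y\hookrightarrow Y\oplus K_X\xto{(p\ r)}C$ one finds $\cone(p)\cong\cone(\alpha)$. Now suppose $s\colon C\to W$ has $sp$ factoring through some $L\in\K$, say $sp=\beta\gamma$. Because $\alpha$ is a left $\K$-approximation, $\gamma f$ factors as $\gamma'\alpha$, so $(\gamma,-\gamma')$ annihilates $\left(\begin{smallmatrix}f\\ \alpha\end{smallmatrix}\right)$ and therefore lifts along $(p\ r)$ to some $\tilde\gamma\colon C\to L$; then $(s-\beta\tilde\gamma)p=0$, hence $s-\beta\tilde\gamma$ factors through $\cone(p)\cong\cone(\alpha)$. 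To conclude $\varpi(s)=0$ you need $\cone(\alpha)\in\K$ --- but an \emph{arbitrary} left $\K$-approximation does not have its cone in $\K$, and neither rotating the triangle, nor the long exact sequence in $\C(-,W)$, nor a left $\K$-approximation of $C$, nor extension-closedness of $\K$ alone will produce this.

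The paper supplies exactly this missing piece as Lemma~\ref{lem:to_construct_cokernel}: for each $X$ there is a triangle $K'[-1]\to X\xto{\iota}\tilde K\to K'$ with \emph{both} $\tilde K$ and $K'$ in $\K$ and $\iota$ a left $\K$-approximation. Its proof genuinely uses the cotorsion pair $(\S,\V)$ --- one first resolves $X$ via $S[-1]\to X\to V$, then takes a left $\K$-approximation of $S[-1]$, and only then invokes extension-closedness and the octahedral axiom; functorial finiteness of $\K$ by itself is not enough. If you replace your $\alpha$ by this special $\iota$, your $C$ becomes isomorphic to the paper's $Z'$ and the uniqueness argument goes through, since now $\cone(p)\cong K'\in\K$.
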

To show the above, we prepare the following auxiliary triangle to construct cokernels in $\C/\K$.

\begin{lemma}\label{lem:to_construct_cokernel}
For any object $X\in\C$, there exists a triangle
\begin{equation}\label{seq:aux_tri}
K'[-1]\xto{p} X\xto{\iota} \tilde K\to K'
\end{equation}
in $\C$ such that $\tilde K,K'\in\K$ and $\iota$ is a left $\K$-approximation of $X$.
\end{lemma}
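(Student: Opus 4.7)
The idea is to take a left $\K$-approximation of $X$, complete it to a triangle, and then show that its cone lies in $\K$ by combining the octahedral axiom with the cotorsion pair resolutions. Since $\S*\V$ is functorially finite, so is its additive hull $\K=\add(\S*\V)$; moreover, because every $K\in\K$ is a direct summand of some object of $\S*\V$, a left $(\S*\V)$-approximation of $X$ is automatically a left $\K$-approximation. I may therefore pick $\iota: X\to\tilde K$ with $\tilde K\in\S*\V$, fitting into a cotorsion triangle $S\to\tilde K\xto{g}V\to S[1]$ with $S\in\S$ and $V\in\V$. Complete $\iota$ to a triangle $X\xto{\iota}\tilde K\to K'\to X[1]$ and rotate to obtain the desired form $K'[-1]\to X\xto{\iota}\tilde K\to K'$. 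It remains only to check that $K'\in\K$.

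For this, apply the octahedral axiom to the composition $X\xto{\iota}\tilde K\xto{g}V$, yielding a triangle $S\to K'\to M\to S[1]$ where $M:=\cone(g\iota)$. Since $\K$ is extension-closed by Lemma~\ref{lem:kernel_of_cohom} and $S\in\S\subseteq\K$, the problem reduces to placing $M$ in $\K$. To accomplish this, use the cotorsion pair cocone resolution $X\to V_X\to S_X\to X[1]$ with $V_X\in\V$ and $S_X\in\S$. Since $X\to V_X$ is a left $\V$-approximation, the map $g\iota:X\to V\in\V$ factors as $X\to V_X\xto{\psi}V$, and a second octahedron on this factorization realizes $M$ as an extension $S_X\to M\to\cone(\psi)\to S_X[1]$, reducing further to showing that $\cone(\psi)\in\K$ for a morphism $\psi: V_X\to V$ between two objects of $\V$.

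\textbf{Main obstacle.} The crux of the argument is exactly this last step, since $\V$ need not be closed under cones in $\C$. I anticipate resolving it in one of two ways: either by making a sharper choice of $\iota$ --- for example, building $\tilde K$ as a homotopy pushout of the cocone triangle of $X$ so that $V$ is explicitly constructed from $V_X$ and the extension class is controlled --- or by invoking the cohomological functor $\mathsf{coh}:\C\to\H/\W$ of Theorem~\ref{thm:AN}, whose kernel is exactly $\K$ by Lemma~\ref{lem:kernel_of_cohom}, and using the long exact sequence of the triangle $X\to\tilde K\to K'\to X[1]$ together with the left-$\K$-approximation property of $\iota$ to force $\mathsf{coh}(K')=0$ directly.
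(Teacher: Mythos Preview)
Your plan to start from an arbitrary left $\K$-approximation $\iota$ and then force $\cone(\iota)\in\K$ is the natural first thought, but it runs into exactly the obstacle you flag, and neither of your proposed fixes closes it.

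Fix (b) does not go through. The long exact sequence for $X\to\tilde K\to K'\to X[1]$ together with $\mathsf{coh}(\tilde K)=0$ yields only an injection $\mathsf{coh}(K')\hookrightarrow\mathsf{coh}(X[1])$; to kill it you would need $\mathsf{coh}(\iota[1]):\mathsf{coh}(X[1])\to\mathsf{coh}(\tilde K[1])$ to be a monomorphism. The left-$\K$-approximation property of $\iota$ controls factorizations of maps $X\to L$ with $L\in\K$; it says nothing about $\iota[1]$, and since $\K$ is not shift-closed there is no reason for $\mathsf{coh}(\tilde K[1])$ to vanish or for $\mathsf{coh}(\iota[1])$ to be monic. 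The same issue blocks your reduction to $\cone(\psi)$ for $\psi:V_X\to V$: cones of maps between objects of $\V$ need not lie in $\K$ (for instance if $\psi=0$ the cone is $V_X[1]\oplus V$, and $V_X[1]$ has no reason to be in $\K$).

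Fix (a) is the right instinct, and it is essentially what the paper does --- but the paper reverses the order of the argument, and this is what makes it close. Rather than choosing an approximation of $X$ and analyzing its cone, the paper \emph{constructs} a specific triangle with both ends already in $\K$ by design, and only afterwards checks the approximation property. Concretely: resolve $X$ as $S[-1]\xto{a}X\to V\to S$ with $S\in\S$, $V\in\V$; take a left $\K$-approximation $b:S[-1]\to K$ of $S[-1]$ (not of $X$) and complete to a triangle $K'[-1]\xto{c}S[-1]\xto{b}K\to K'$. Extension-closedness of $\K$ and the triangle $K\to K'\to S\to K[1]$ give $K'\in\K$ immediately. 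The octahedron on $K'[-1]\xto{c}S[-1]\xto{a}X$ then produces $K'[-1]\xto{ac}X\xto{d}\tilde K\to K'$ together with a triangle $K\to\tilde K\to V\to K[1]$, so $\tilde K\in\K$ as well. Finally, for any $\alpha:X\to L$ with $L\in\K$, the composite $\alpha a:S[-1]\to L$ factors through $b$; since $bc=0$ (consecutive maps in a triangle) this gives $\alpha(ac)=0$, and hence $\alpha$ factors through $d$. The approximation hypothesis is invoked only on $S[-1]$, where the relevant cone is visibly an extension inside $\K$; this is exactly what your approach on $X$ itself cannot arrange.
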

\begin{proof}
Resolve $X$ by the cotorsion pair $(\S,\V)$ to get a triangle $S[-1]\xto{a} X\to V\to S$ with $S\in\S$ and $V\in\V$.
Since $\K$ is functorially finite, there exists a left $\K$-approximation $S[-1]\xto{b} K$ of $S[-1]$ which completes a triangle $K[-1]\to K'[-1]\xto{c} S[-1]\xto{b} K$.
Since Lemma \ref{lem:kernel_of_cohom} says $\K$ is extension-closed, $K'$ belongs to $\K$.
By the octahedral axiom, the triangles appearing so far induce the following commutative diagram:
$$
\xymatrix@R=16pt{
&&K\ar[d]\ar@{=}[r]&K\ar[d]\\
K'[-1]\ar[r]^{ac}\ar[d]_c&X\ar[r]^d\ar@{=}[d]&\tilde K\ar[r]\ar[d]&K'\ar[d]^{c[1]}\\
S[-1]\ar[r]^a&X\ar[r]&V\ar[d]\ar[r]&S\ar[d]^{b[1]}\\
&&K[1]\ar@{=}[r]&K[1]
}
$$
with all rows and columns are triangles.
Again, since $\K$ is extension-closed, we have $\tilde K\in\K$.
The second row is a desired one.
In fact, for any morphism $\alpha:X\to L\in\K$, the composition $\alpha a$ factors through the left $\K$-approximation $b$.
It follows that $\alpha ac=0$ and $\alpha$ factors through $d$.
\end{proof}

\begin{proof}[Proof of Proposition \ref{prop:C/K_is_preabelian}]
Let $X\xto{f} Y\xto{g} Z\to X[1]$ be a triangle in $\C$.
We shall provide a construction of the cokernel of $\varpi(f)$ in $\C/\K$.
For the object $X$, we consider an auxiliary triangle (\ref{seq:aux_tri}).
The octahedral axiom gives the following commutative diagram:
\[
\xymatrix@R=16pt{
&&\tilde K\ar[d]\ar@{=}[r]&\tilde K\ar[d]\\
K'[-1]\ar[r]^{fp}\ar[d]_{p}&Y\ar@{=}[d]\ar[r]^h&Z'\ar[d]\ar[r]&K'\ar[d]^{p[1]}\\
X\ar[r]^f&Y\ar[r]^g&Z\ar[d]\ar[r]&X[1]\ar[d]^{\iota[1]}\\
&&\tilde K[1]\ar@{=}[r]&\tilde K[1]
}
\]
The image of the sequence $X\xto{f}Y\xto{h}Z'$ is a cokernel sequence in $\C/\K$.
To verify this, we take a morphism $g_0:Y\to M$ such that $g_0f$ factors through an object in $\K$.
Since $\iota:X\to \tilde K$ is a left $\K$-approximation, $g_0f$ factors through $\iota$.
Therefore we have $g_0fp=0$ and that there exists a morphism $g_1:Z'\to M$ together with $g_1h=g_0$.
To show the uniqueness of $g_1$, we consider a morphism $g_2:Z'\to M$ together with $\varpi(g_2h)=\varpi(g_0)$.
Then $(g_1-g_2)h$ factors through an object $K_0\in\K$.
More precisely, there exists morphisms $Y\xto{x}K_0\xto{y}Z$ with $yx=(g_1-g_2)h$.
Taking a weak pushout of $h$ along $x$ we get the following commutative diagram made of triangles.
\[
\xymatrix@R=16pt{
K'[-1]\ar[r]^{fp}\ar@{=}[d]&Y\ar@{}[rd]|{\textnormal{(Po)}}\ar[d]_x\ar[r]^h&Z'\ar[d]\ar[r]&K'\ar@{=}[d]\\
K'[-1]\ar[r]&K_0\ar[r]&K_1\ar[r]&K'
}
\]
Note that $K_1$ also belongs to $\K$.
Hence the commutative square of $yx=(g_1-g_2)h$ shows that $g_1-g_2$ factors through $K_1$.
We have thus concluded that $\C/\K$ has cokernels.
Dually we can construct the kernel.
\end{proof}

By using this construction of the cokernel, we characterize epimorphisms in $\C/\K$.

\begin{lemma}\label{lem:char_of_epi}
Let $X\xto{f} Y\xto{g}Z\to X[1]$ be a triangle in $\C$.
Then, the morphism $\varpi(f)$ is an epimorphism in $\C/\K$ if and only if the morphism $g: Y\to Z$ factors through an object in $\K$.
\end{lemma}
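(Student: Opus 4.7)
The forward direction is immediate: the triangle relation $gf=0$ gives $\varpi(g)\varpi(f)=\varpi(gf)=0=0\cdot\varpi(f)$ in $\C/\K$, and right-cancellation by the epimorphism $\varpi(f)$ then forces $\varpi(g)=0$, i.e., $g$ factors through an object of $\K$.

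For the converse, I would use the cokernel construction from Proposition \ref{prop:C/K_is_preabelian}. The cokernel of $\varpi(f)$ in the preabelian category $\C/\K$ is represented by $\varpi(h):Y\to Z'$, where $Z'$ is produced by the octahedral and fits into a triangle $\tilde K\xto{\iota'}Z'\xto{q}Z\xto{\partial}\tilde K[1]$ with $\tilde K\in\K$ and $qh=g$. Since $\C/\K$ is preabelian, $\varpi(f)$ is an epimorphism if and only if its cokernel morphism $\varpi(h)$ is the zero morphism. Applying $\varpi$ to the equation $qh=g$ yields $\varpi(q)\varpi(h)=\varpi(g)=0$ in $\C/\K$. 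The decisive step is to argue that $\varpi(q)$ is a monomorphism in $\C/\K$: left-cancellation by this mono then forces $\varpi(h)=0$, so $\varpi(f)$ is epi. This monomorphism property is exactly the dual of the present lemma applied to the triangle $\tilde K\xto{\iota'}Z'\xto{q}Z\xto{\partial}\tilde K[1]$: the morphism $\iota'$ preceding $q$ has its source $\tilde K$ in $\K$, so it trivially factors through $\K$, and the dual assertion supplies the required mono.

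\textbf{Main obstacle.} The plan rests on the dual characterization of monomorphisms in $\C/\K$, which must be established in parallel, not cited from the present lemma. The dual runs via the dual of Proposition \ref{prop:C/K_is_preabelian}: the kernel of a morphism in $\C/\K$ is built from a right $\K$-approximation (the dual of the auxiliary triangle in Lemma \ref{lem:to_construct_cokernel}), whose existence is guaranteed by the functorial finiteness hypothesis on $\S*\V$ in Theorem \ref{thm:generalized_BM2}. Thus the epi characterization (this lemma) and the mono characterization (its dual) should be proved as a matched pair, each invoking the other's construction but not its final conclusion, so no circular dependency arises; the expected payoff is that once both are in place, they yield at once the integrality of $\C/\K$ needed to apply Rump's Proposition \ref{prop:localization_of_integral} in the subsequent proof.
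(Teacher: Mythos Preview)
Your proposal is correct and, for the converse direction, essentially identical to the paper's argument: both build the cokernel $\varpi(g'):Y\to Z'$ via the octahedral diagram, observe $g=hg'$ with $h:Z'\to Z$ sitting in a triangle $\tilde K\to Z'\xto{h}Z\to\tilde K[1]$, and finish by showing $\varpi(h)$ is a monomorphism so that $\varpi(g)=0$ forces $\varpi(g')=0$.

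Two remarks on the differences. First, your forward direction is cleaner than the paper's: you simply use $gf=0$ and cancel the epimorphism $\varpi(f)$, whereas the paper rebuilds the cokernel and argues $Z'\in\K$ to exhibit a factorization of $g$ through $Z'$. Your shortcut is perfectly valid. Second, for the converse the paper organizes the ``no circularity'' issue more explicitly than your ``matched pair'' description: it first isolates and proves the special case ``$\cone(f)\in\K\Rightarrow\varpi(f)$ epi'' directly from the cokernel construction (this is step~[1] in the paper), and then invokes the \emph{dual of that special case} (cocone in $\K\Rightarrow$ mono), which is self-contained and does not rely on the full dual lemma. Your plan amounts to the same thing, but you should make explicit that what you actually need is only this special case --- where the cocone is literally an object of $\K$, not merely factoring through one --- and that this special case is provable outright from the kernel construction, so no genuine mutual dependence between the two full lemmas arises.
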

\begin{proof}

[A] Firstly, we consider the case of $Z\in\K$.
We shall show that $\varpi (f)$ is epic in $\C/\K$.
To construct a cokernel of $\varpi(f)$, we take an auxiliary triangle (\ref{seq:aux_tri}) $K[-1]\xto{p}X\xto{\iota}\tilde K\to K$ and complete the following commutative diagram by the octahedral axiom:
\begin{equation}\label{diag:construction_of_cokernels}
\xymatrix@R=16pt{
&&\tilde K\ar[d]\ar@{=}[r]&\tilde K\ar[d]\\
K[-1]\ar[r]^{fp}\ar[d]_{p}&Y\ar@{=}[d]\ar[r]^{g'}&Z'\ar[d]^h\ar[r]&K\ar[d]^{p[1]}\\
X\ar[r]^f&Y\ar[r]^g&Z\ar[d]\ar[r]&X[1]\ar[d]^{\iota[1]}\\
&&\tilde K[1]\ar@{=}[r]&\tilde K[1]
}
\end{equation}
Note that $Z'\in\K$.
Since the image of the sequence $X\xto{f}Y\xto{g'}Z'\to 0$ is a cokernel sequence in $\C/\K$,
$\varpi(Z')=0$ forces the morphism $\varpi(f)$ to be epic.

[B] Assume that $\varpi(f)$ is an epimorphism.
To construct the cokernel of $\varpi(f)$, we take an auxiliary triangle (\ref{seq:aux_tri}) and complete the same diagram (\ref{diag:construction_of_cokernels}).
Thus we have a cokernel sequence $X\xto{\varpi(f)}Y\xto{\varpi(g')}Z'\to 0$ in $\C/\K$.
Since $\varpi(f)$ is epic, we get $Z'\in\K$.
Hence $g:Y\to Z$ factors through $Z'\in\K$.

To show the converse, we consider a triangle $X\xto{f}Y\xto{g}Z\to X[1]$ where $g$ factors through an object in $\K$.
To construct the cokernel of $\varpi(f)$, we consider the same diagram as (\ref{diag:construction_of_cokernels}).
Since $g$ factors through an object in $\K$, we have $\varpi(g)=\varpi(hg')=0$.
Since $\cocone(h)\cong \tilde K\in\K$, by the dual of [1], we have that $\varpi(h)$ is a monomorphism.
Thus $\varpi(g')=0$ which forces $\varpi(f)$ is an epimorphism.
\end{proof}

We now verify a nicer property.

\begin{proposition}\label{prop:C/K_is_integral}
The preabelian category $\C/(\S*\V)$ is integral.
\end{proposition}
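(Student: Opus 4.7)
The plan is to verify left integrality; right integrality will follow from the dual argument, using the characterization of monomorphisms in $\C/\K$ dual to Lemma \ref{lem:char_of_epi} together with homotopy pushouts and the universal property of pushouts. So fix a pullback square
$$
\xymatrix@R=12pt@C=12pt{
A \ar[r]^a \ar[d]_b \ar@{}[rd]|{\textnormal{(Pb)}} & B \ar[d]^c \\
C \ar[r]_d & D
}
$$
in $\C/\K$, where $\K := \add(\S * \V)$, and assume $\varpi(d)$ is epic. The goal is to prove that $\varpi(a)$ is epic.

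The key idea is to compare $A$ with the homotopy pullback formed in $\C$. First, lift $c$ and $d$ to $\C$ and form the triangle $P \xto{\binom{\alpha}{\beta}} B \oplus C \xto{(c\ -d)} D \to P[1]$, so that $\alpha : P \to B$ and $\beta : P \to C$ are the legs of a homotopy pullback square in $\C$. A standard fact about homotopy pullbacks (derivable via the octahedral axiom) gives a triangle $P \xto{\alpha} B \xto{gc} Z \to P[1]$, where $Z$ and $g : D \to Z$ come from the cofiber triangle $C \xto{d} D \xto{g} Z \to C[1]$ of $d$; in particular $\cone(\alpha)$ and $\cone(d)$ are identified via the middle map $gc$. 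Now Lemma \ref{lem:char_of_epi} applied to $d$ provides a factorization of $g$ through an object of $\K$, and hence $gc$ also factors through $\K$; reapplying Lemma \ref{lem:char_of_epi} (in the converse direction) to the triangle through $\alpha$ shows that $\varpi(\alpha)$ is epic in $\C/\K$.

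To conclude, observe that $c\alpha = d\beta$ holds in $\C$, and hence in $\C/\K$, so the universal property of the pullback square $(\textnormal{Pb})$ yields a morphism $\phi : \varpi(P) \to A$ in $\C/\K$ with $a\phi = \varpi(\alpha)$. Since $a\phi$ is epic, $a$ is right-cancellable, whence epic. The main technical point in this plan is pinning down that the middle map of the cofiber triangle of $\alpha$ really is $gc$: one needs to chase the $3\times 3$-diagram produced by the octahedral axiom to identify $\cone(\alpha)$ with $\cone(d)$ together with the correct comparison morphism, since without this precise form Lemma \ref{lem:char_of_epi} cannot be invoked to transfer the epi property from $\varpi(d)$ to $\varpi(\alpha)$.
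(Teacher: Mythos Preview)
Your argument is correct, and it takes a genuinely different route from the paper's.

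Both proofs finish the same way: produce an object $P'$ together with a morphism $\psi:P'\to B$ in $\C$ such that $\varpi(\psi)$ is epic and factors through $a$ via the universal property of the pullback in $\C/\K$; then $a$ is epic since it has an epic right factor. The difference lies in the choice of $P'$.

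The paper takes $P'=K[-1]$ from the auxiliary triangle $K[-1]\xto{p}B\xto{\iota}\tilde K\to K$ of Lemma~\ref{lem:to_construct_cokernel}. Since $\iota$ is a left $\K$-approximation and the cofiber map $e:D\to E$ of $d$ factors through $\K$, the composite $ec:B\to E$ factors through $\iota$, which in turn yields a morphism $K[-1]\to C$ compatible with $p$ and $d$. The epimorphicity of $\varpi(p)$ then comes for free from $\cone(p)=\tilde K\in\K$.

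You instead take $P'=P$, the homotopy pullback of $c$ and $d$ in $\C$, and invoke the base-change identification $\cone(\alpha)\cong\cone(d)$ with comparison map $gc$ to transport the factorization of $g$ through $\K$ over to the cofiber of $\alpha$. This is more conceptual and avoids a second appeal to Lemma~\ref{lem:to_construct_cokernel}; it is also the argument one would expect in a general triangulated setting. The paper's version, by contrast, keeps everything phrased in terms of the $\K$-approximation machinery already built for Proposition~\ref{prop:C/K_is_preabelian}. Your acknowledged technical point---that the map $B\to\cone(d)$ in the rotated octahedral triangle really is $gc$---is routine (apply the octahedral axiom to $C\hookrightarrow B\oplus C\xto{(c\ -d)}D$), but it is indeed the place where care is needed.
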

\begin{proof}
We consider the following pullback diagram in $\C/\K$
$$
\xymatrix@R=16pt@C=16pt{
A\ar[d]_{\varpi(b)}\ar[r]^{\varpi(a)}&B\ar[d]^{\varpi(c)}\\
C\ar@{->>}[r]^{\varpi(d)}&D
}
$$
with $\varpi(d):C\to D$ epic.
We shall show that $\varpi(a)$ is epic.
Complete a triangle $C\xto{d}D\xto{e} E\to C[1]$ in $\C$.
By Lemma \ref{lem:char_of_epi}, the morphism $e$ is factored as $e:D\xto{e_2} L\xto{e_1} E$ with $L\in\K$.
For the object $B\in\C$, take a triangle $K[-1]\xto{p}B\xto{\iota}\tilde K\to K$ (\ref{seq:aux_tri}).
Since $\iota:B\to \tilde K$ is a left $\K$-approximation, we have a morphism $f:\tilde K\to L$ which makes the following diagram in $\C$ commutative
$$
\xymatrix@R=10pt@C=15pt{
K[-1]\ar@{..>}[dd]_g\ar[rr]^p&&B\ar[dd]_c\ar[rr]^\iota&&\tilde K\ar[dd]^{e_1f}\ar[dl]|f\ar[rr]&&K\ar@{..>}[dd]\\
&&&L\ar[rd]|{e_1}&&&\\
C\ar[rr]_d&&D\ar[ru]|{e_2}\ar[rr]_e&&E\ar[rr]&&C[1]
}
$$
Then we get the above dotted arrow $g:K[-1]\to C$.
By the universality of the pullback, we have a further commutative diagram in $\C/(\S*\V)$:
$$
\xymatrix@R=16pt@C=16pt{
K[-1]\ar@/^1.5pc/[rrd]^{\varpi(p)}\ar@/_1.5pc/[ddr]_{\varpi(g)}\ar@{..>}[rd]&&\\
&A\ar[d]_{\varpi(b)}\ar[r]^{\varpi(a)}&B\ar[d]^{\varpi(c)}\\
&C\ar[r]^{\varpi(d)}&D
}
$$
Since $\varpi(p)$ is an epimorphism by Lemma \ref{lem:char_of_epi}, so is $\varpi(a)$.
\end{proof}

Thanks to Proposition \ref{prop:localization_of_integral}, we have the following localization admits a calculus of left and right fractions.

\begin{corollary}\label{cor:preabel_to_abel}
The class $\mathbb{R}$ of regular morphisms admits a calculus of left and right fractions.
The Gabriel-Zisman localization $(\C/\K)[\mathbb{R}^{-1}]$ of $(\C/\K)$ with respect to $\mathbb{R}$ is abelian.
\end{corollary}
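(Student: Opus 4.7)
The statement to prove is a straightforward corollary that combines earlier structural results, so my plan is primarily to assemble the right inputs and apply Rump's theorem.

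The plan is to invoke Proposition \ref{prop:localization_of_integral} directly. That proposition says: for any integral preabelian category $\A$, the class of regular morphisms admits a calculus of left and right fractions, the localization functor is additive, and the localization is abelian. So the work is entirely to verify the hypothesis, that $\C/\K$ is integral preabelian, and then to quote the conclusion.

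The preabelian structure of $\C/\K$ has been established in Proposition \ref{prop:C/K_is_preabelian}, and its integrality in Proposition \ref{prop:C/K_is_integral}. Hence $\C/\K$ satisfies both hypotheses of Proposition \ref{prop:localization_of_integral}. Applying parts (1) and (3) of that proposition yields, respectively, the calculus of left and right fractions for $\mathbb{R}$ and the abelianness of $(\C/\K)[\mathbb{R}^{-1}]$. Part (2) is a bonus giving additivity of $\mathsf{L}_\mathbb{R}$, which is not needed for the stated conclusion but will be useful for the factorization in Theorem \ref{thm:generalized_BM2}.

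There is essentially no obstacle left at this stage; the nontrivial content sits in Propositions \ref{prop:C/K_is_preabelian} and \ref{prop:C/K_is_integral} (the constructions of kernels/cokernels via the octahedral axiom and the auxiliary triangle of Lemma \ref{lem:to_construct_cokernel}, together with the characterization of epimorphisms from Lemma \ref{lem:char_of_epi} used to verify that pullbacks preserve epimorphisms). Given those, the proof of the corollary is a one-line citation. The only mild subtlety worth flagging in the write-up is that Rump's theorem is stated for integral preabelian categories in full generality, whereas here we also know the universal functor $\varpi:\C\to\C/\K$ is additive, and we will want in Theorem \ref{thm:generalized_BM2} to compose $\mathsf{L}_\mathbb{R}\circ\varpi$ and identify it with $\mathsf{L}_\mathbb{V}$; but for the corollary itself no such identification is needed.
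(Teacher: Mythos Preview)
Your proposal is correct and matches the paper's approach exactly: the paper simply notes that, thanks to Proposition~\ref{prop:localization_of_integral}, the corollary follows once $\C/\K$ is known to be integral preabelian, which is the content of Propositions~\ref{prop:C/K_is_preabelian} and~\ref{prop:C/K_is_integral}.
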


Next we shall show that the abelian category $(\C/\K)[\mathbb{R}^{-1}]$ is equivalent to the heart.

\begin{proposition}\label{prop:localization_VR}
There exists an equivalence $F:\C[\mathbb{V}^{-1}]\xto{\sim} (\C/\K)[\mathbb{R}^{-1}]$ which makes the following diagram commutative up to isomorphism
$$
\xymatrix{
\C\ar[d]_{\mathsf{L}_\mathbb{V}}\ar[r]^\varpi&\C/\K\ar[r]^{\mathsf{L}_\mathbb{R}\ \ \ }&(\C/\K)[\mathbb{R}^{-1}]\\
\C[\mathbb{V}^{-1}]\ar[rru]_F&&
}
$$
\end{proposition}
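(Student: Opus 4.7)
The plan is to construct a functor $G\colon (\C/\K)[\mathbb{R}^{-1}] \to \C[\mathbb{V}^{-1}]$ in the opposite direction satisfying $G \circ \mathsf{L}_\mathbb{R} \circ \varpi \cong \mathsf{L}_\mathbb{V}$, and then argue $G$ is an equivalence and take $F := G^{-1}$. First I would factor $\mathsf{L}_\mathbb{V}$ through $\varpi$: by Corollary \ref{cor:heart_is_gHTCP}(3) one has $\mathsf{L}_\mathbb{V} \cong \Phi \circ \mathsf{coh}$, and Lemma \ref{lem:kernel_of_cohom} gives $\ker \mathsf{coh} = \K$, so $\mathsf{L}_\mathbb{V}$ annihilates every morphism factoring through $\K$ and descends uniquely to an additive functor $\bar L\colon \C/\K \to \C[\mathbb{V}^{-1}]$ with $\bar L \circ \varpi \cong \mathsf{L}_\mathbb{V}$.

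Next I would check that $\bar L$ inverts regular morphisms. Given a regular $\varpi(f)$, extend $f$ to a triangle $X \xto{f} Y \xto{g} Z \xto{h} X[1]$ in $\C$. Epicness of $\varpi(f)$ forces $g$ to factor through an object of $\K$ by Lemma \ref{lem:char_of_epi}, so $\mathsf{coh}(g) = 0$; the dual statement applied to the rotation of the triangle yields that $h[-1]\colon Z[-1]\to X$ likewise factors through $\K$, so $\mathsf{coh}(h[-1]) = 0$. The long exact cohomology sequence of $\mathsf{coh}$ on the triangle then sandwiches $\mathsf{coh}(f)$ between two zeros, forcing it to be an isomorphism, whence $\bar L(\varpi(f)) \cong \Phi(\mathsf{coh}(f))$ is an isomorphism. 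Universality of $\mathsf{L}_\mathbb{R}$ in Proposition \ref{prop:localization_of_integral} then factors $\bar L$ through $\mathsf{L}_\mathbb{R}$, producing the desired $G$.

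For $F$, the natural route is to show $\mathsf{L}_\mathbb{R} \circ \varpi$ inverts every morphism in $\mathbb{V}$, so that universality of $\mathsf{L}_\mathbb{V}$ yields $F$ with $F \circ \mathsf{L}_\mathbb{V} \cong \mathsf{L}_\mathbb{R} \circ \varpi$. Writing $f \in \mathbb{V}$ as $f = p \circ \iota$ with $\iota$ a $\C^+$-inflation (triangle $X \xto{\iota} H \to S \to X[1]$, $S \in \S \subseteq \K$) and $p$ a $\C^-$-deflation (triangle $V \to H \xto{p} Y \to V[1]$, $V \in \V \subseteq \K$), Lemma \ref{lem:char_of_epi} yields $\varpi(\iota)$ epic and, dually, $\varpi(p)$ monic in $\C/\K$. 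Once both $F$ and $G$ are available, the uniqueness clauses in their defining universal properties produce mutual inverses $F \circ G \cong 1$ and $G \circ F \cong 1$.

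The main obstacle will be upgrading the one-sided ``epic/monic'' properties to full invertibility after $\mathsf{L}_\mathbb{R}$. I expect to compute the kernel of $\varpi(\iota)$ (resp.\ cokernel of $\varpi(p)$) via the octahedral construction of Lemma \ref{lem:to_construct_cokernel} and its dual, identify them as objects on which $\mathsf{coh}$ already vanishes (via the long exact sequence of the defining triangle), and then transfer this vanishing to the abelian category $(\C/\K)[\mathbb{R}^{-1}]$ using exactness of the localization functor on the integral preabelian category $\C/\K$. Should this direct route prove too intricate, a cleaner alternative is to sidestep the explicit construction of $F$ and prove $G$ is fully faithful directly: essential surjectivity is immediate from $G(\mathsf{L}_\mathbb{R}\varpi(Y)) \cong \mathsf{L}_\mathbb{V}(Y)$, while full-faithfulness can be attacked by expressing morphisms on both sides through the calculus of left and right fractions for $\mathbb{R}$ together with the explicit fraction description of morphisms in $\C[\mathbb{V}^{-1}]$ developed in Section \ref{sec:gHTCP}.
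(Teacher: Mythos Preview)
Your overall architecture matches the paper's: construct functors in both directions via the two universal properties and conclude by uniqueness. Your construction of $G$ (your $\bar L$) is essentially identical to the paper's second paragraph, using that $\mathsf{L}_\mathbb{V}\cong\Phi\circ\mathsf{coh}$ is cohomological and kills $\K$.

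The only substantive difference is in the construction of $F$, where you flag an ``obstacle'' that is in fact not there. You correctly observe that for a $\C^+$-inflation $\iota$ with triangle $X\xto{\iota}T\to S\to X[1]$ one gets $\varpi(\iota)$ epic because $S\in\S\subseteq\K$, and dually $\varpi(p)$ monic for a $\C^-$-deflation $p$. What you are missing is that the \emph{other} halves are equally immediate once you invoke Lemma~\ref{lem:coreflection} and its dual. For $p\in\udef$ with triangle $V\to U\xto{p}X\xto{g}V[1]$: monicness of $\varpi(p)$ is free since $V\in\V\subseteq\K$, and epicness follows because the coreflection triangle guarantees that the connecting map $g$ factors through some $V'_X\in\V\subseteq\K$, so Lemma~\ref{lem:char_of_epi} applies. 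Dually for $\iota\in\tinf$, the reflection triangle forces $S[-1]\to X$ to factor through an object of $\S\subseteq\K$, giving monicness. Hence $\varpi(\mathbb{V})\subseteq\mathbb{R}$ outright, and $F$ exists by the universal property of $\mathsf{L}_\mathbb{V}$ with no further work.

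So your proposed workarounds (computing kernels/cokernels via octahedra, or proving $G$ fully faithful via fraction calculus) are unnecessary; the paper's route is a one-line appeal to the defining property of the (co)reflection triangles. Everything else in your plan is sound.
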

\begin{proof}
To show the existence of $F$, we shall confirm $\mathsf{L}_\mathbb{R}\circ\varpi(\mathbb{V})$ is a class of isomorphims in $(\C/\K)[\mathbb{R}^{-1}]$.
Consider a morphism $f\in\udef$ together with a triangle $V\to U\xto{f}X\xto{g} V[1]$ where $V\in\V$ and $f$ is a right $\U$-approximation of $X$.
By comparing the above triangle and the coreflection triangle (\ref{diag:coreflection}), we have that $g$ factors through $V'_X\in\V$.
Due to Lemma \ref{lem:char_of_epi} and its dual, we conclude that $\varpi(f)$ is a regular morphism in $\C/\K$.
Similarly, $\varpi(\tinf)$ forms a class of regular morphisms in $\C/\K$. 
By the universality of $\C[\mathbb{V}^{-1}]$, the existence and uniqueness of $F$ follow.

Next, we shall construct the inverse of $F$.
Firstly we recall $R\pi(\V)=L\pi(\S)=0$ in $\Z/\I$.
Hence, since $\mathsf{L}_\mathbb{V}$ is cohomological, we also have $\mathsf{L}_\mathbb{V}(\K)=0$.
Therefore we have an additive functor $G':\C/\K\to\C[\mathbb{V}^{-1}]$ such that $\mathsf{L}_\mathbb{V}\cong G'\circ\varpi$.
It suffices to show that $G'$ sends any regular morphism to an isomorphism in $\C[\mathbb{V}^{-1}]$.
Due to Lemma \ref{lem:char_of_epi}, again since $\mathsf{L}_\mathbb{V}$ is cohomological, $G'(\mathbb{R})$ is a class of isomorphisms.
By the universality, the existence of a desired functor $G$ follows and $F$ is an equivalence.
\end{proof}

Now we are ready to prove Theorem \ref{thm:generalized_BM2}.

\begin{proof}[Proof of Theorem \ref{thm:generalized_BM2}]
The assertions (1) and (2) follow from Proposition \ref{prop:C/K_is_preabelian} and Corollary \ref{cor:preabel_to_abel}.
The others (3) and (4) follow from Proposition \ref{prop:localization_VR} and Corollary \ref{cor:heart_is_gHTCP}.
\end{proof}

As an application of Theorem \ref{thm:generalized_BM2}, we conclude that the cohomological functor $\mathsf{coh}:\C\to\H/\W$ has a universality in the following sense.

\begin{corollary}\label{cor:universal_cohomological_functor}
Assume that $\S*\V$ is functorially finite in $\C$.
Let $H$ be a cohomological functor from $\C$ to an abelian category $\A$.
If $H(\K)=0$, then there, uniquely up to isomorphism, exists an exact functor $H'$ such that $H\cong H'\circ\mathsf{coh}$. 
\end{corollary}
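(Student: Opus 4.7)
My plan is to exploit the factorization provided by Theorem \ref{thm:generalized_BM2}: write $\mathsf{coh}$ (up to the equivalence with $\C[\mathbb{V}^{-1}]$) as $\mathsf{L}_\mathbb{R}\circ\varpi$, and construct $H'$ by successively factoring $H$ through $\varpi$ and then through $\mathsf{L}_\mathbb{R}$.

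\textbf{Step 1: factor through $\varpi$.} Since $H(\K)=0$ and $\varpi:\C\to\C/\K$ is an additive quotient (hence a Gabriel--Zisman localization at the sections with cokernels in $\K$, by Example \ref{ex:univ_of_stable_cat}), the functor $H$ factors uniquely, up to isomorphism, as $H\cong \bar H\circ\varpi$ with $\bar H:\C/\K\to\A$ an additive functor. This is the routine half.

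\textbf{Step 2: $\bar H$ inverts regular morphisms.} This is the heart of the argument. Let $\varpi(f):\varpi(X)\to\varpi(Y)$ be regular in $\C/\K$, and complete $f$ to a triangle $Z[-1]\xto{h}X\xto{f}Y\xto{g}Z$ in $\C$. By Lemma \ref{lem:char_of_epi}, the epimorphy of $\varpi(f)$ means that $g$ factors through some object of $\K$; the dual statement, applied after rotation, gives that the monomorphy of $\varpi(f)$ means $h$ factors through an object of $\K$. Applying the cohomological functor $H$ to the above triangle produces an exact sequence
\begin{equation*}
H(Z[-1])\xto{H(h)}H(X)\xto{H(f)}H(Y)\xto{H(g)}H(Z)
\end{equation*}
in $\A$. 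Since $H(\K)=0$, both $H(h)$ and $H(g)$ vanish, so $H(f)$ is both monic and epic, hence an isomorphism in the abelian category $\A$. Thus $\bar H$ sends $\mathbb{R}$ to isomorphisms.

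\textbf{Step 3: invoke the universality of $\mathsf{L}_\mathbb{R}$ and the equivalence with $\H/\W$.} By Theorem \ref{thm:generalized_BM2}(2), the class $\mathbb{R}$ admits a calculus of left and right fractions, so the Gabriel--Zisman localization $\mathsf{L}_\mathbb{R}:\C/\K\to(\C/\K)[\mathbb{R}^{-1}]$ is universal among functors inverting $\mathbb{R}$. We therefore obtain a unique functor $\widetilde H:(\C/\K)[\mathbb{R}^{-1}]\to\A$ with $\widetilde H\circ\mathsf{L}_\mathbb{R}\cong \bar H$. Composing with the quasi-inverse of the equivalence $(\C/\K)[\mathbb{R}^{-1}]\xto{\sim}\H/\W$ from Theorem \ref{thm:generalized_BM2}(3) yields the desired $H':\H/\W\to\A$. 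Additivity of $H'$ is automatic: $\mathsf{L}_\mathbb{R}$ is additive by Proposition \ref{prop:localization_of_integral}(2) and bijective on objects, so it preserves biproducts, and $H'$ inherits additivity from $\bar H$. Tracing through Theorem \ref{thm:generalized_BM2}(4) and Corollary \ref{cor:heart_is_gHTCP}(3) gives the commutativity $H\cong H'\circ\mathsf{coh}$.

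\textbf{Uniqueness and main obstacle.} If $H_1',H_2'$ both satisfy $H\cong H_i'\circ\mathsf{coh}$, then pulling back along the equivalence of Theorem \ref{thm:generalized_BM2}(3) turns them into two factorizations of $\bar H$ through $\mathsf{L}_\mathbb{R}$, which must be isomorphic by the universal property of $\mathsf{L}_\mathbb{R}$; transporting back gives $H_1'\cong H_2'$. The only non-formal point in the whole argument is Step 2, where the interplay between the cohomological property of $H$ and the characterizations of epimorphisms and monomorphisms in $\C/\K$ (Lemma \ref{lem:char_of_epi} and its dual) is essential; everything else is a transparent application of universal properties already established in the paper.
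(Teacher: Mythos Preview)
Your proof is correct and follows essentially the same route as the paper's own argument: factor $H$ through the additive quotient $\varpi$ using $H(\K)=0$, then observe that the cohomological property together with Lemma \ref{lem:char_of_epi} and its dual forces the induced functor to invert regular morphisms, and finally apply the universal property of $\mathsf{L}_\mathbb{R}$ and the identification $(\C/\K)[\mathbb{R}^{-1}]\simeq\H/\W$ from Theorem \ref{thm:generalized_BM2}. Your Step 2 spells out precisely what the paper compresses into the single sentence ``Since $H$ is cohomological, $H''$ sends $\mathbb{R}$ to a class of isomorphisms,'' and your explicit treatment of uniqueness and additivity fills in details the paper leaves implicit.
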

\begin{proof}
By $H(\K)=0$, we have a functor $H'':\C/\K\to \A$ which makes the following diagram commutative up to isomorphism
$$
\xymatrix{
\C\ar[d]_H\ar[r]^{\varpi\ \ }&\C/\K\ar[ld]|{H''}\ar[r]^{\mathsf{L}_\mathbb{R}\ \ \ }&(\C/\K)[\mathbb{R}^{-1}]\ar@{..>}[lld]^{H'}\\
\A&&
}
$$
Since $H$ is cohomological, $H''$ sends $\mathbb{R}$ to a class of isomorphisms in $\A$. Thus we have a desired functor $H'$.
The exactness directly follows from the construction of cokernels and kernels in $(\C/\K)[\mathbb{R}^{-1}]$ (see the proof of Proposition \ref{prop:C/K_is_preabelian}).
\end{proof}

\section{Comments on another related work}\label{sec:related_work}
In this section, we give some comments on results closely related to Theorem \ref{thm:generalized_BM2} which can be easily deduced from \cite{Nak13, HS20} (see \cite{Liu13} for exact case).
Their method depends on a use of the twin cotorsion pair $\Q:=((\S,\T),(\U,\V))$ which is defined to be a pair of two cotorsion pairs with $\S\subseteq \U$.

\begin{remark}
\begin{enumerate}
\item The twin cotorsion pair $\Q=((\S,\T),(\U,\V))$ does not require that $(\S,\V)$ forms a cotorsion pair.
If this is the case, it forces $\S=\U$ and $\T=\V$.
\item Note that our gHTCP is not a twin cotorsion pair in general.
\end{enumerate}
\end{remark}

The \emph{heart} $\underline{\H}_{\Q}$ of twin cotorsion pair $\Q$ was introduced as a generalization of the heart of a cotorsion pair \cite[Def. 2.8]{Nak13}.
Their results show that the heart $\underline{\H}_{\Q}$ of some special twin cotorsion pair is an integral category, and its localization $\underline{\H}_{\Q}[\mathbb{R}^{-1}]$ with respect to the class of regular morphisms is equivalent to the heart $\underline{\H}_{(\S,\T)}$ of the cotorsion pair $(\S,\T)$.

By Proposition \ref{prop:enough_projective_heart} and its dual, we have the following corollary which shows that, under some assumption, a cotorsion pair $(\S,\V)$ gives rise to a twin cotorsion pair.

\begin{corollary}
Let $\C$ be a triangulated category equipped with a cotorsion pair $(\S,\V)$.
If $\S$ has enough projectives $\mathsf{P}(\S)$ and $\V$ has enough injectives $\mathsf{I}(\V)$,
then we have a twin cotorsion pair 
$$\Q:=((\proj(\S),\K),(\K,\inj(\V))),$$
where $\K=\add(\S*\V)$.
Moreover, there exists an equivalence between the heart $\H/\W$ of $(\S,\V)$ and that of $(\proj(\S),\K)$.
\end{corollary}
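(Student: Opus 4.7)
The proof naturally splits into two parts. For the first part---verifying that $\Q$ is a twin cotorsion pair---I would invoke Proposition~\ref{prop:enough_projective_heart} and its dual. The hypothesis that $\S$ has enough projectives yields, via that proposition, the cotorsion pair $(\proj(\S),\K)$ in $\C$, while the dual version (applied under the assumption that $\V$ has enough injectives) yields the cotorsion pair $(\K,\inj(\V))$. The twin-condition $\proj(\S)\subseteq\K$ is then automatic, since $\proj(\S)\subseteq\S\subseteq\S\ast\V\subseteq\K$ via the trivial triangle $S\to S\to 0$ (using $0\in\V$).

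For the equivalence of hearts, the most conceptual route is to exhibit both as the same Gabriel-Zisman localization of $\C$. By Corollary~\ref{cor:heart_is_gHTCP}, each of the cotorsion pairs $(\S,\V)$ and $(\proj(\S),\K)$ produces a gHTCP in $\C$ whose associated localization realizes the corresponding heart. It then suffices to check that the two classes of morphisms $\mathbb{V}_{(\S,\V)}$ and $\mathbb{V}_{(\proj(\S),\K)}$ yield the same localization of $\C$, for which the universality in Theorem~\ref{thm:RL=LR_implies_universality} and the identification of $\K$ as the kernel of the cohomological functor (Lemma~\ref{lem:kernel_of_cohom}) will play the key roles. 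Alternatively, one could apply Proposition~\ref{prop:enough_projective_heart} to both cotorsion pairs to obtain two module-category descriptions of the hearts and identify them via a Morita-style argument.

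The main obstacle I foresee is the bookkeeping of how the several projective subcategories and the class $\K$ interact with the extriangulated structures when moving between the two cotorsion pairs; concretely, one must verify that the $\T$-inflations and $\U$-deflations associated to the two gHTCPs generate the same class of isomorphisms in the localization. Once this compatibility is in hand, Theorem~\ref{thm:RL=LR_implies_universality} produces the sought equivalence between $\H/\W$ and the heart of $(\proj(\S),\K)$ automatically.
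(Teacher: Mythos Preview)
Your treatment of the first part matches the paper exactly: the two cotorsion pairs come from Proposition~\ref{prop:enough_projective_heart} and its dual, and the inclusion $\proj(\S)\subseteq\K$ is trivial.

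For the equivalence of hearts, the paper does not go through gHTCP localizations at all; it simply invokes Proposition~\ref{prop:enough_projective_heart} a second time, which is precisely your ``alternative'' route---and in fact no Morita-style argument is required. Applied to the cotorsion pair $(\proj(\S),\K)$, the proposition yields an equivalence between its heart and $\mod\bigl(\proj(\proj(\S))[-1]\bigr)$. But $\proj(\S)\subseteq\K$ forces $\mathbb{E}(\proj(\S),\proj(\S))=0$, so every object of $\proj(\S)$ is projective in $\proj(\S)$; hence $\proj(\proj(\S))=\proj(\S)$, and both hearts are identified with the \emph{same} module category $\mod(\proj(\S)[-1])$. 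This is a one-line observation once the proposition is in hand.

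Your primary approach via comparing the classes $\mathbb{V}_{(\S,\V)}$ and $\mathbb{V}_{(\proj(\S),\K)}$ is not wrong in principle, but it is considerably heavier: the $\T$-inflations and $\U$-deflations for the two gHTCPs are genuinely different (for instance, the $\C^-$ associated to $(\proj(\S),\K)$ is $\proj(\S)[-1]\ast\proj(\S)$, not $\S[-1]\ast\W$), and showing the resulting localizations agree would essentially re-prove the module-category identification by other means. The paper's shortcut avoids all of this bookkeeping.
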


In this case, it directly follows from \cite[Thm. 6.3]{Nak13} and \cite[Thm. 5.6]{HS20} that there exists a commutative diagram which is a special case of Theorem \ref{thm:generalized_BM2}(3) as below:
$$
\xymatrix@C=30pt{
\C\ar[rd]\ar[rr]^{\mathsf{coh}}&&\H/\W\\
&\underline{\H}_{\Q}\ar[ru]_{\mathsf{L}_\mathbb{R}}&
}
$$
where $\mathsf{L}_\mathbb{R}$ denotes the localization with respect to the class $\mathbb{R}$ of regular morphisms.

\subsection*{Data Availability Statement}
N/A

\subsection*{Acknowledgements}
The author wishes to thank Professor Hiroyuki Nakaoka for giving valuable comments.
In addition he thanks the anonymous referees for their carefully reading the manuscript and
helpful suggestions.



\begin{thebibliography}{99}

\bibitem[AN]{AN12}
N.~Abe, H.~Nakaoka, \emph{General heart construction on a triangulated category (II): Associated homological functor}.
Appl. Categ. Structures 20 (2012), no. 2, 161--174.

\bibitem[Aus]{Aus66}
M.~Auslander, \emph{Coherent functors}.
1966 Proc. Conf. Categorical Algebra (La Jolla, Calif., 1965) pp. 189--231 Springer, New York.

\bibitem[BBD]{BBD}
A.~Beilinson, J.~Bernstein, P.~Deligne, \emph{Faisceaux Pervers (Perverse sheaves)}.
Analysis and Topology on Singular Spaces, I, Luminy, 1981, Asterisque 100 (1982) 5--171 (in French).

\bibitem[Bel]{Bel13}
A.~Beligiannis,
\emph{Rigid objects, triangulated subfactors and abelian localizations}.
274 (2013), no. 3-4, 841--883.


\bibitem[BMPS]{BMPS19}
V.~Becerril, O.~Mendoza, M.~P\'erez, V.~Santiago, \emph{Frobenius pairs in abelian categories. Correspondences with cotorsion pairs, exact model categories, and Auslander-Buchweitz contexts}.
J. Homotopy Relat. Struct. 14 (2019), no. 1, 1--50.

\bibitem[BM1]{BM13a}
A.~Buan, R.~Marsh, \emph{From triangulated categories to module categories via localisation}.
Trans. Amer. Math. Soc. 365 (2013), no. 6, 2845--2861.

\bibitem[BM2]{BM13b}
A.~Buan, R.~Marsh, \emph{From triangulated categories to module categories via localization II: calculus of fractions}.
J. Lond. Math. Soc. (2) 86 (2012), no. 1, 152--170.

\bibitem[DS]{DS95}
W.~G.~Dwyer, J.~Spali\'nski, \emph{Homotopy theories and model categories}.
Handbook of algebraic topology, North-Holland, Amsterdam, 1995, pp. 73--126.

\bibitem[Fri]{Fri08}
T.~Fritz, \emph{Categories of Fractions Revisited}.
arXiv:0803.2587v2.

\bibitem[GZ]{GZ67}
P.~Gabriel, M.~Zisman, \emph{Calculus of fractions and homotopy theory}.
Ergebnisse der Mathematik und ihrer Grenzgebiete, Band 35 Springer-Verlag New York, Inc., New York 1967 {\rm x}+168 pp.

\bibitem[Gil]{Gil11}
J.~Gillespie, \emph{Model structures on exact categories}.
J. Pure Appl. Algebra 215 (2011) no. 12, 2892--2902.

\bibitem[Hap]{Hap88}
D.~Happel, \emph{Triangulated categories in the representation theory of finite dimensional algebras}.
London Mathematical Society Lecture Note Series, 119. Cambridge University Press, Cambridge, 1988.

\bibitem[HS]{HS20}
S.~Hassoun, A.~Shah, \emph{Integral and quasi-abelian hearts of twin cotorsion pairs on extriangulated categories}.
Comm. Algebra 48 (2020), no. 12, 5142--5162.

\bibitem[Hov1]{Hov99}
M.~Hovey, \emph{Model categories}.
volume 63 of Mathematical Surveys and Monographs.
American Mathematical Society, Providence, RI, 1999.

\bibitem[Hov2]{Hov02}
M.~Hovey, \emph{Cotorsion pairs, model category structures, and representation theory}.
Math. Z. 241 (2002) no. 3, 553--592.

\bibitem[IYa]{IYa17}
O.~Iyama, D.~Yang, \emph{Quotients of triangulated categories and Equivalences of Buchweitz, Orlov and Amiot--Guo--Keller}.
arXiv:1702.04475.

\bibitem[IYo]{IY08}
O.~Iyama, Y.~Yoshino, \emph{Mutation in triangulated categories and rigid Cohen-Macaulay modules}.
Invent. Math. 172 (2008), no. 1, 117--168.

\bibitem[KR]{KR07}
B.~Keller, I.~Reiten, \emph{Cluster-tilted algebras are Gorenstein and stably Calabi-Yau}.
Adv. Math. 211 (2007), no. 1, 123--151.

\bibitem[KZ]{KZ08}
S.~Koenig, B.~Zhu, \emph{From triangulated categories to abelian categories: cluster tilting in a general framework}.
Math. Z. 258 (2008), no. 1, 143--160.

\bibitem[Len]{Len98}
H.~Lenzing, \emph{Auslander's work on Artin algebras}.
in Algebras and modules, I (Trondheim,
1996), 83--105, CMS Conf. Proc., 23, Amer. Math. Soc., Providence, RI, 1998.

\bibitem[Li]{Li16}
Z.-W.~Li, \emph{The realization of Verdier quotients as triangulated subfactors}.
arXiv:1612.08340v5.

\bibitem[Liu2]{Liu13}
Y.~Liu, \emph{Hearts of twin cotorsion pairs on exact categories}.
J. Algebra 394 (2013), 245--284.

\bibitem[LN]{LN19}
Y.~Liu, H.~Nakaoka, \emph{hearts of twin cotorsion pairs on extriangulated categories}.
J. Algebra 528 (2019), 96--149.

\bibitem[Nak1]{Nak11}
H.~Nakaoka, \emph{General heart construction on a triangulated category (I): Unifying $t$-structures and cluster tilting subcategories}.
Appl. Categ. Structures 19 (2011), no. 6, 879--899.

\bibitem[Nak2]{Nak13}
H.~Nakaoka, \emph{General heart construction for twin torsion pairs on triangulated categories}.
J. Algebra 374 (2013), 195--215.

\bibitem[Nak3]{Nak18}
H.~Nakaoka, \emph{A simultaneous generalization of mutation and recollement of cotorsion pairs on a triangulated category}.
Appl. Categ. Structures 26 (2018), no. 3, 491--544.

\bibitem[NP]{NP19}
H.~Nakaoka, Y.~Palu, \emph{Extriangulated categories, Hovey twin cotorsion pairs and model structures}.
Topol. G\'eom. Diff\'er. Cat\'eg, vol LX (2019), Issue 2, 117-193.

\bibitem[Nee]{Nee01}
A.~Neeman, \emph{Triangulated categories}.
Annals of Mathematics Studies 148, Princeton University Press (2001).

\bibitem[Pal]{Pal14}
Y.~Palu, \emph{From triangulated categories to module categories via homotopical algebra}.
arXiv:1412.7289.

\bibitem[Pop]{Pop73}
N.~Popescu, \emph{Abelian categories with applications to rings and modules}.
London Mathematical Society Monographs, No. 3. Academic Press, London-New York, 1973. {\rm xii}+467 pp.

\bibitem[Psa]{Psa14}
C.~Psaroudakis, \emph{Homological theory of recollements of abelian categories}.
J. Algebra 398 (2014), 63--110.

\bibitem[Rum]{Rum01}
W.~Rump, \emph{Almost abelian categories}.
Topologie G\'eom. Diff\'erentielle Cat\'eg. 42 (2001), no. 3, 163--225.

\bibitem[Sal]{Sal79}
L.~Salce, \emph{Cotorsion theories for abelian groups}
in Symposia Mathematica,
Vol. XXIII (Conf. Abelian Groups and their Relationship
to the Theory of Modules, INDAM, Rome, 1977) (1979), 11--32.

\end{thebibliography}
\end{document}